\newcommand{\labitem}[2]{%
\def\@itemlabel{\textbf{#1}}
\item
\def\@currentlabel{#1}\label{#2}}
\newcommand{\Acal}{\mathcal{A}}
\newcommand{\Bbf}{\mathbf{B}}
\newcommand{\Bcal}{\mathcal{B}}
\newcommand{\Ebb}{\mathbb{E}}
\newcommand{\Fcal}{\mathcal{F}}
\newcommand{\gbf}{\mathbf{g}}
\newcommand{\Gcal}{\mathcal{G}}
\newcommand{\Hbf}{\mathbf{H}}
\newcommand{\Kbf}{\mathbf{K}}
\newcommand{\Kcal}{\mathcal{K}}
\newcommand{\Lbf}{\mathbf{L}}
\newcommand{\Mcal}{\mathcal{M}}
\newcommand{\Nbb}{\mathbb{N}}
\newcommand{\Pbb}{\mathbb{P}}
\newcommand{\Qbf}{\mathbf{Q}}
\newcommand{\Qcal}{\mathcal{Q}}
\newcommand{\Rbb}{\mathbb{R}}
\newcommand{\Sbf}{\mathbf{S}}
\newcommand{\Tcal}{\mathcal{T}}
\newcommand{\Vbf}{\mathbf{V}}
\newcommand{\Xcal}{\mathcal{X}}
\newcommand{\Ycal}{\mathcal{Y}}
\newcommand{\Zbb}{\mathbb{Z}}
\newcommand{\Csigma}{C_\Qbf}
\newcommand{\CB}{C_\gamma}
\newcommand{\Caux}{{C_\text{aux}}}
\newcommand{\Gref}{{G_\lambda}}
\newcommand{\mix}{{\text{mix}}}
\newcommand{\one}{\mathbf{1}}
\newcommand{\pen}{\text{pen}}
\DeclareMathOperator{\argmax}{{arg max}}
\DeclareMathOperator{\Cov}{{Cov}}
\DeclareMathOperator{\Corr}{{Corr}}
\DeclareMathOperator{\Var}{{Var}}
\renewcommand{\leq}{\leqslant}
\renewcommand{\geq}{\geqslant}
\begin{document}

\title{Nonasymptotic control of the MLE for misspecified nonparametric hidden Markov models}

\author{\name Luc Lehéricy \email luc.lehericy@univ-cotedazur.fr \\
       \addr Laboratoire J. A. Dieudonn\'e \\
       Universit\'e C\^ote d'Azur, CNRS \\
       06108, Nice, France}

\maketitle

\begin{abstract}
Finite state space hidden Markov models are flexible tools to model phenomena with complex time dependencies: any process distribution can be approximated by a hidden Markov model with enough hidden states.
We consider the problem of estimating an unknown 
process distribution using nonparametric hidden Markov models in the \emph{misspecified setting}, that is when the data-generating 
process may not 
be a hidden Markov model.
We show that when the true distribution is exponentially mixing and satisfies a forgetting assumption, the maximum likelihood estimator recovers the best approximation of the true distribution. We prove a finite sample bound on the resulting error and show that it is optimal in the minimax sense\---up to logarithmic factors\---when the model is well specified.
\end{abstract}



\begin{keywords}
misspecified model, 
nonparametric statistics, maximum likelihood estimator, model selection, oracle inequality, hidden Markov model
\end{keywords}

\tableofcontents

\section{Introduction}

Let $(Y_1, \dots, Y_n)$ be a sample following some unknown distribution $\Pbb^*$.
The maximum likelihood estimator can be formalized as follows:
let $\{\Pbb_\theta\}_{\theta \in \Theta}$, the \emph{model}, be a family of possible distributions; 
pick a distribution $\Pbb_{\hat{\theta}}$ of the model which maximizes the likelihood of the observed sample.

In many situations, the true distribution may not belong to the model at hand: this is the so-called \emph{misspecified setting}. One would like the estimator to give sensible results even in this setting. This can be done by showing that the estimated distribution converges to the best approximation of the true distribution within the model.
The goal of this paper is to establish a finite sample bound on the error of the maximum likelihood estimator for a large class of true distributions and a large class of nonparametric hidden Markov models.

In this paper, we consider maximum likelihood estimators (shortened MLE) based on model selection among finite state space hidden Markov models (shortened HMM).
A finite state space hidden Markov model is a stochastic process $(X_t, Y_t)_t$ where only the observations $(Y_t)_t$ are observed, such that the process $(X_t)_t$ is a Markov chain taking values in a finite space and such that the $Y_s$ are independent conditionally to $(X_t)_t$ with a distribution depending only on the corresponding $X_s$. The parameters of a HMM $(X_t, Y_t)_t$ are the initial distribution and the transition matrix of $(X_t)_t$ and the distributions of $Y_s$ conditionally to $X_s$.

HMMs have been widely used in practice, for instance in climatology \citep{LWM03}, ecology \citep{boyd2014HmmUsesOfBirdTrajectories}, voice activity detection and speech recognition \citep{CC00, Lef03}, biology \citep{YPRH11, VBMMR14}... 
One of their advantages is their ability to account for complex dependencies between the observations: despite the seemingly simple structure of these models, the fact that the process $(X_t)_t$ is hidden makes the process $(Y_t)_t$ non-Markovian.

Up to now, most theoretical work in the literature focused on well-specified and parametric HMMs, where a smooth parametrization by a subset of $\Rbb^d$ is available, see for instance \cite{baum1966HMM} for discrete state and observations spaces, \cite{leroux92MLEHMM} for general observation spaces and \cite{douc2001asymptotics} and \cite{douc2011consistency} for general state and observation spaces.
Asymptotic properties for misspecified models have been studied recently by \cite{mevel2004misspecified} for consistency and asymptotic normality in finite state space HMMs and \cite{douc2012misspecified} for consistency in HMMs with general state space. Let us also mention \cite{pouzo2016misspecified}, who studied a generalization of hidden Markov models in a semi-misspecified setting. All these results focus on parametric models.

Few results are available on nonparametric HMMs, and all of them focus on the well-specified setting. \cite{AH14} prove consistency of a nonparametric maximum likelihood estimator based on finite state space hidden Markov models with nonparametric mixtures of parametric densities. \cite{vernet2015posterior, vernet2015posteriorrates} study the posterior consistency and concentration rates of a Bayesian nonparametric maximum likelihood estimator. Other methods have also been considered, such as spectral estimators in \cite{AHK12, HKZ12, dCGLLC15, bonhomme2016multiview, lehericy2017sbs} and least squares estimators in \cite{dCGL15, lehericy2017sbs}. Besides \cite{vernet2015posteriorrates}, to the best of our knowledge, there has been no result on convergence rates or finite sample error of the nonparametric maximum likelihood estimator, even in the well-specified setting.

The main result of this paper is an oracle inequality that holds as soon as the models have controlled tails. This bound is optimal when the true distribution is a HMM taking values in $\Rbb$. Let us give some details about this result.

Let us start with an overview of the assumptions on the true distribution $\Pbb^*$.
The first assumption is that the observed process is strongly mixing.
Strong mixing assumptions can be seen as a strengthened version of ergodicity. They have been widely used to extend results on independent observation to dependent processes, see for instance \cite{bradley2005strongmixingsurvey} and \cite{dedecker2007weakdependence} for a survey on strong mixing and weak dependence conditions.
The second assumption is that the process forgets its past exponentially fast.
For hidden Markov models, this forgetting property is closely related to the exponential stability of the optimal filter, see for instance \cite{legland2000forgetting, gerencser2007forgetting, douc2004asymptotic, douc2009forgetting}.
The last assumption is that the likelihood of the true process has sub-polynomial tails, or equivalently a finite moment.
None of these assumptions are specific to HMMs, thus making our result applicable to the misspecified setting.

To approximate a large class of true distributions, we consider nonparametric HMMs, where the parameters are not described by a finite dimensional space. For instance, one may consider HMMs with arbitrary number of states and arbitrary emission distributions.
Computing a maximizer of the likelihood directly in a nonparametric model may be hard or result in overfitting.
The model selection approach offers a way to circumvent this issue. It consists in considering a countable family of parametric sets $(S_M)_{M \in \Mcal}$\---the \emph{models}\---and selecting one of them. The larger the union of all models, the more distributions are approximated. Several criteria can be used to select the model, such as bootstrap, cross validation (see for instance \cite{arlot2010survey}) or penalization (see for instance \cite{Mas07}). We use a penalized criterion, which consists in maximizing the function
\begin{equation*}
(S, \theta \in S) \longmapsto \frac{1}{n} \log p_\theta(Y_1, \dots, Y_n) - \pen_n(S),
\end{equation*}
where $p_\theta$ is the density of $(Y_1, \dots, Y_n)$ under the parameter $\theta$ and the penalty $\pen$ only depends on the model $S$ and the number of observations $n$.

Assume that the emission distributions of the HMMs\---that is the distribution of the observations conditionally to the hidden states\---are absolutely continuous with respect to some known probability measure, and call \emph{emission densities} their densities with respect to this measure.
The tail assumption ensures that the emission densities have sub-polynomial tail:
\begin{equation*}
\forall v \geq e, \qquad \Pbb^* \left( \sup_{\gamma} \gamma(Y_1) \geq v^{\Csigma \log n} \right) \leq \frac{1}{v},
\end{equation*}
where the supremum is taken over all emission densities $\gamma$ in the models and for some constant $\Csigma > 0$. For instance, this assumption holds when all densities are upper bounded by $e^{\Csigma \log n}$. A key remark at this point is the dependency of the exponent with $n$: we allow the models to depend on the sample size. Typically, taking a larger sample makes it possible to consider larger models.

To stabilize the log-likelihood, we modify the models in the following way. First, only keep HMMs whose transition matrix have entries that are neither too small nor too large: when the HMM has $K$ hidden states, the entries of the transition matrix should belong to the interval $[K / (\CB \log n), K \CB \log n]$ for some constant $\CB > 0$.
Then, replace the emission densities $\gamma$ by a convex combination of the original emission densities and of the dominating measure $\lambda$ with a weight that decreases polynomially with the sample size. In other words, replace $\gamma$ by $(1 - n^{-a}) \gamma + n^{-a} \lambda$ for some $a > 0$. Taking $a > 1$ ensures that the component $\lambda$ is asymptotically negligible. Any $a > 0$ works, but the constants of the oracle inequality depend on it.

A simplified version of our main result (Theorem~\ref{th_oracle_simplifie}) is the following oracle inequality: there exist constants $A$ and $n_0$ such that if the penalty is large enough, the penalized maximum likelihood estimator $\hat{\theta}_n$ satisfies for all $t \geq 1$, $\eta \in (0,1)$ and $n \geq n_0$, with probability larger than $1 - e^{-t} - n^{-2}$:
\begin{equation*}
\Kbf(\hat{\theta}_n) \leq (1+\eta) \inf_{\dim(S) \leq n} \left\{ \inf_{\theta \in S}\Kbf(\theta) + 2\pen_n(S) \right\} + \frac{A}{\eta} t \frac{(\log n)^{10}}{n},
\end{equation*}
where $\Kbf(\theta)$ can be seen as a Kullback-Leibler divergence between the distributions $\Pbb^*$ and $\Pbb_\theta$. In other words, the estimator recovers the best approximation of the true distribution within the model, up to the penalty and the residual term.

In the case where the true distribution is a HMM, it is possible to quantify the approximation error $\inf_{\theta \in S}\Kbf(\theta)$. Using the results of \cite{kruijer2010adaptive}, we show that the above oracle inequality is optimal in the minimax sense\---up to logarithmic factors\---for real-valued HMMs, see Corollary~\ref{cor_mixture_minimax_estimation}. This is done by taking HMMs whose emission densities are mixtures of exponential power distributions\---which include Gaussian mixtures as a special case.

The paper is organized as follows.
We detail the framework of the article in Section~\ref{sec_notations_assumptions}. In particular, Section~\ref{sec_star_assumptions} describes the assumptions on the true distribution, Section~\ref{sec_model_assumptions} presents the assumptions on the model and Section~\ref{sec_prop_loglikelihood} introduces the Kullback Leibler criterion used in the oracle inequality.
Our main results are stated in Section~\ref{sec_main}. Section~\ref{sec_oracle} contains the oracle inequality and Section~\ref{sec_minimax} shows how it can be used to show minimax adaptivity for real-valued HMMs.
Section~\ref{sec_perspectives} lists some perspectives for this work.

One may wish to relax our assumptions depending on the setting. For instance, one could want to change the tail conditions or the rate of forgetting. We give an overview of the key steps of the proof of our oracle inequality in Section~\ref{sec_toute_la_preuve_oracle} to make it easier to adapt our result.

Some proofs are postponed the Appendices. Appendix~\ref{sec_proof_minimax} contains the proof of the minimax adaptivity result and Appendix~\ref{sec_proof_control_barnuk} contains the proof of the main technical lemma of Section~\ref{sec_toute_la_preuve_oracle}.

\section{Notations and assumptions}
\label{sec_notations_assumptions}

We will use the following notations:
\begin{itemize}[noitemsep]
\item $a \vee b$ is the maximum of $a$ and $b$, $a \wedge b$ the minimum;
\item For $x \in \Rbb$, we write $x^+ = x \vee 0$;
\item $\Nbb^* = \{1, 2, 3, \dots \}$ is the set of positive integers;
\item For $K \in \Nbb^*$, we write $[K] = \{1, 2, \dots, K\}$;
\item $Y_a^b$ is the vector $(Y_a, \dots, Y_b)$;
\item $\Lbf^2(A,\Acal,\mu)$ is the set of measurable and square integrable functions defined on the measured space $(A,\Acal,\mu)$. We write $\Lbf^2(A,\mu)$ when the sigma-field is not ambiguous;
\item $\log$ is the inverse function of the exponential function $\exp$.
\end{itemize}

\subsection{Hidden Markov models}

Finite state space hidden Markov models (HMM in short) are stochastic processes $(X_t, Y_t)_{t \geq 1}$ with the following properties. The \emph{hidden state} process $(X_t)_t$ is a Markov chain taking value in a finite set $\Xcal$ (the \emph{state space}). We denote by $K$ the cardinality of $\Xcal$, and $\pi$ and $\Qbf$ the initial distribution and transition matrix of $(X_t)_t$ respectively. The \emph{observation} process $(Y_t)_t$ takes value in a polish space $\Ycal$ (the \emph{observation space}) endowed with a Borel probability measure $\lambda$. The observations $Y_t$ are independent conditionally to $(X_t)_t$ with a distribution depending only on $X_t$. In the following, we assume that the distribution of $Y_t$ conditionally to $\{X_t = x\}$ is absolutely continuous with respect to $\lambda$ with density $\gamma_x$. We call $\gamma = (\gamma_x)_{x \in \Xcal}$ the \emph{emission densities}.

Therefore, the parameters of a HMM are its number of hidden states $K$, its initial distribution $\pi$ (the distribution of $X_1$), its transition matrix $\Qbf$ and its emission densities $\gamma$.
When appropriate, we write $p_{(K,\pi,\Qbf,\gamma)}$ the density of the process with respect to the dominating measure under the parameters ${(K,\pi,\Qbf,\gamma)}$. For a sequence of observations $Y_1^n$, we denote by $l_n(K,\pi,\Qbf,\gamma)$ the associated log-likelihood under the parameters ${(K,\pi,\Qbf,\gamma)}$, defined by
\begin{equation*}
l_n(K,\pi,\Qbf,\gamma) = \log p_{(K,\pi,\Qbf,\gamma)}(Y_1^n).
\end{equation*}

We denote by $\Pbb^*$ the true (and unknown) distribution of the process $(Y_t)_t$, $\Ebb^*$ the expectation under $\Pbb^*$, $p^*$ the density of $\Pbb^*$ under the dominating measure and $l_n^*$ the log-likelihood of the observations under $\Pbb^*$. Let us stress that this distribution may not be generated by a finite state space HMM.

\subsection{The model selection estimator}

Let $(S_{K,M,n})_{K \in \Nbb^*, M \in \Mcal}$ be a family of parametric models such that for all $K \in \Nbb^*$ and $M \in \Mcal$, the parameters ${(K,\pi,\Qbf,\gamma)} \in S_{K,M,n}$ correspond to HMMs with $K$ hidden states.
Note that the models $S_{K,M,n}$ may depend on the number of observations $n$. Let us see two ways to construct such models.

\begin{description}
\item[Mixture densities.] Let $\{ f_\xi \}_{\xi \in \Xi}$ be a parametric family of probability densities. Let $\Mcal \subset \Nbb^*$. We choose $S_{K,M,n}$ to be the set of parameters $(K, \pi, \Qbf, \gamma)$ such that $\pi$ and $\Qbf$ are the initial distribution and transition matrix of a Markov chain on $[K]$ and for all $x \in [K]$, $\gamma_x$ is a convex combination of $M$ elements of $\{ f_\xi \}_{\xi \in \Xi}$.

\item[$\Lbf^2$ densities.] Let $(E_M)_{M \in \Mcal}$ be a family of finite dimensional subspaces of $\Lbf^2(\Ycal, \lambda)$. We choose $S_{K,M,n}$ to be the set of parameters $(K, \pi, \Qbf, \gamma)$ such that $\pi$ and $\Qbf$ are the initial distribution and transition matrix of a Markov chain on $[K]$ and for all $x \in [K]$, $\gamma_x$ is a probability density such that $\gamma_x = g \vee 0$ for a function $g \in E_M$.
\end{description}
For all $K \in \Nbb^*$ and $M \in \Mcal$, we define the maximum likelihood estimator on $S_{K,M,n}$:
\begin{equation*}
{(K, \hat\pi_{K,M,n}, \hat\Qbf_{K,M,n}, \hat\gamma_{K,M,n})}
	\in \underset{(K,\pi,\Qbf,\gamma) \in S_{K,M,n}}{\argmax} \; \frac{1}{n} l_n(K,\pi,\Qbf,\gamma).
\end{equation*}

Since the true distribution does not necessarily correspond to a parameter of $S_{K,M,n}$, taking a larger model $S_{K,M,n}$ will reduce the bias of the estimator $(K, \hat\pi_{K,M,n}, \hat\Qbf_{K,M,n}, \hat\gamma_{K,M,n})$. However, larger models will make the estimation more difficult, resulting in a larger variance. This means one has to perform a bias-variance tradeoff to select a model with a reasonable size. To do so, we select a number of states $\hat{K}_n$ among a set of integers $\Kcal_n$ and a model index $\hat{M}_n$ among a set of indices $\Mcal_n$ such that the penalized log-likelihood is maximal:
\begin{equation*}
(\hat{K}_n, \hat{M}_n) \in \underset{K \in \Kcal_n, M \in \Mcal_n}{\argmax} \left( \frac{1}{n} l_n(K, \hat\pi_{K,M,n}, \hat\Qbf_{K,M,n}, \hat\gamma_{K,M,n}) - \pen_n(K,M) \right)
\end{equation*}
for some penalty $\pen_n$ to be chosen.

In the following, we use the following notations.
\begin{itemize}
\item $\Sbf_n := \bigcup_{K \in \Kcal_n, M \in \Mcal_n} S_{K,M,n}$ is the set of all parameters involved with the construction of the maximum likelihood estimator;

\item $S_{K,M,n}^{(\gamma)} = \{ \gamma \, | \, (K,\pi,\Qbf,\gamma) \in S_{K,M,n} \}$ is the set of density vectors from the model $S_{K,M,n}$. $\Sbf_n^{(\gamma)}$ is defined in the same way.
\end{itemize}

\subsection{Assumptions on the true distribution}
\label{sec_star_assumptions}

In this section, we introduce the assumptions on the true distribution of the process $(Y_t)_{t \geq 1}$. We assume that $(Y_t)_{t \geq 1}$ is stationary, so that one can extend it into a process $(Y_t)_{t \in \Zbb}$.

\begin{description}
\labitem{{[A$\star$tail]}}{Astar_tail}
There exists $\delta > 0$ such that 
\begin{equation*}
M_\delta := \sup_{i,k} \Ebb^*[ (p^*(Y_i | Y_{i-k}^{i-1}))^\delta ] < \infty.
\end{equation*}
\end{description}
This assumption ensures that the true log-density rarely takes extreme values (see Lemma~\ref{lem_tail_moments}).
\begin{description}
\labitem{{[A$\star$forget
]}}{Astar_forgetting} There exist two constants $C_* > 0$ and $\rho_* \in (0,1)$ such that for all $i \in \Zbb$, for all $k,k' \in \Nbb^*$ and for all $y_{i-(k \vee k')}^i \in \Ycal^{(k \vee k')+1}$,
\begin{equation*}
| \log p^*(y_i | y_{i-k}^{i-1}) - \log p^*(y_i | y_{i-k'}^{i-1}) | \leq C_* \rho_*^{k \wedge k' - 1}
\end{equation*}
\end{description}
Let us recall the definition of the $\rho$-mixing coefficient. Let $(\Omega, \Fcal, P)$ be a measured space and $\Acal \subset \Fcal$ and $\Bcal \subset \Fcal$ be two sigma-fields. Let
\begin{equation*}
\rho_\mix(\Acal, \Bcal) = \underset{g \in \Lbf^2(\Omega,\Bcal,P)}{\sup_{f \in \Lbf^2(\Omega,\Acal,P)}} | \Corr(f,g) |.
\end{equation*}

The $\rho$-mixing coefficient of $(Y_t)_t$ is defined by
\begin{equation*}
\rho_\mix(n) = \rho_\mix(\sigma(Y_i, i \geq n), \sigma(Y_i, i \leq 0)).
\end{equation*}

\begin{description}
\labitem{{[A$\star$mix
]}}{Astar_mixing} There exist two constants $c_* > 0$ and $n_* \in \Nbb^*$ such that
\begin{equation*}
\forall n \geq n_*, \quad \rho_\mix(n) \leq 4 e^{- c_* n}.
\end{equation*}
\end{description}
Assumption~\ref{Astar_forgetting} ensures that the process forgets its initial distribution exponentially fast. 
This assumption is especially useful for truncating the dependencies in the likelihood.
\ref{Astar_mixing} is a usual mixing assumption and is used to obtain Bernstein-like concentration inequalities. Note that~\ref{Astar_mixing} implies that the process $(Y_t)_{t \geq 1}$ is ergodic.

Even if~\ref{Astar_forgetting} is analog to a $\psi$-mixing condition (see \cite{bradley2005strongmixingsurvey} for a survey on mixing conditions) and is proved using the same tool~\ref{Astar_mixing} in hidden Markov models\---namely the geometric ergodicity of the hidden state process\---these two assumptions are different in general. For instance, a Markov chain always satisfies~\ref{Astar_forgetting} but not necessarily~\ref{Astar_mixing}. Conversely, there exist processes satisfying~\ref{Astar_mixing} but not~\ref{Astar_forgetting}.

\begin{lemma}
\label{lemma_exempleAppli_Astar_HMMcompact}
Assume that $(Y_t)_t$ is generated by a HMM with a compact metric state space $\Xcal$ (not necessarily finite) endowed with a Borel probability measure $\mu$. Write $\Qcal^*$ its transition kernel and assume that $\Qcal^*$ admits a density with respect to $\mu$ that is uniformly lower bounded and upper bounded by positive and finite constants $\sigma^*_-$ and $\sigma^*_+$. Write $(\gamma^*_x)_{x \in \Xcal}$ its emission densities and assume that they satisfy $\int \gamma^*_x(y) \mu(dx) \in (0, +\infty)$ for all $y \in \Ycal$.

Then \ref{Astar_forgetting} and \ref{Astar_mixing} hold by taking $\rho_* = 1 - \frac{\sigma^*_-}{\sigma^*_+}$, $C_* = \frac{1}{1 - \rho_*}$, $c_* = \frac{- \log (1 - \sigma_-^*)}{2}$ and $n_* = 1$.
\end{lemma}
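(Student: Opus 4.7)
The plan is to treat the two assertions separately, both exploiting the uniform minorization that the bound $\sigma^*_- \leq q^* \leq \sigma^*_+$ imposes on the hidden kernel: since $\mu$ is a probability measure, $\Qcal^*(x, \cdot) \geq \sigma^*_- \mu(\cdot)$.

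\emph{Forgetting.} I would first rewrite the one-step-ahead predictive density via the optimal filter. Let $\phi^{(k)} := \Pbb^*(X_{i-1} \in \cdot \mid Y_{i-k}^{i-1})$ and $\psi^{(k)}(x) := \int q^*(x', x)\, \phi^{(k)}(dx')$, so that
$$p^*(y_i \mid y_{i-k}^{i-1}) = \int_{\Xcal} \gamma^*_x(y_i)\, \psi^{(k)}(x)\, \mu(dx).$$
The two-sided bound on $q^*$ yields $\sigma^*_- \leq \psi^{(k)}(x) \leq \sigma^*_+$ uniformly in $x$, hence $p^*(y_i \mid y_{i-k}^{i-1}) \geq \sigma^*_-\, \bar\gamma(y_i)$ with $\bar\gamma(y_i) := \int \gamma^*_x(y_i)\, \mu(dx) \in (0, +\infty)$ by assumption. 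Taking WLOG $k \leq k'$ and using stationarity, both $\phi^{(k)}$ and $\phi^{(k')}$ are filters at time $i-1$ given the same observations $Y_{i-k}^{i-1}$, but started from different initial laws at time $i-k$ (the stationary law $\Pi$ and $\Pbb^*(X_{i-k} \in \cdot \mid Y_{i-k'}^{i-k-1})$, respectively). I would then invoke the classical filter stability estimate under the mixing condition $\sigma^*_- \leq q^* \leq \sigma^*_+$ (Le Gland--Oudjane-type contraction in Hilbert projective metric, transferred to total variation) to obtain $\|\phi^{(k)} - \phi^{(k')}\|_{\text{TV}} \leq \rho_*^{k-1}$ with $\rho_* = 1 - \sigma^*_-/\sigma^*_+$. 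Combining this with the pointwise bound $|\psi^{(k)}(x) - \psi^{(k')}(x)| \leq (\sigma^*_+ - \sigma^*_-)\, \|\phi^{(k)} - \phi^{(k')}\|_{\text{TV}}$ (from $q^*(\cdot, x) \in [\sigma^*_-, \sigma^*_+]$ applied to the Jordan decomposition of $\phi^{(k)} - \phi^{(k')}$) and the elementary inequality $|\log a - \log b| \leq |a - b|/\min(a, b)$ yields the claimed bound with constant $(\sigma^*_+ - \sigma^*_-)/\sigma^*_- \leq \sigma^*_+/\sigma^*_- = C_*$ and rate $\rho_*^{k \wedge k' - 1}$.

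\emph{Mixing.} The minorization $\Qcal^*(x, \cdot) \geq \sigma^*_- \mu(\cdot)$ makes the hidden chain uniformly ergodic at rate $1-\sigma^*_-$: $\sup_x \|\Qcal^{*n}(x, \cdot) - \Pi\|_{\text{TV}} \leq (1 - \sigma^*_-)^n$, which implies $\phi_X(n) \leq (1 - \sigma^*_-)^n$ for the $\phi$-mixing coefficient of $(X_t)_t$. Bradley's inequality $\rho \leq 2\sqrt{\phi}$ then gives $\rho_X(n) \leq 2(1 - \sigma^*_-)^{n/2} = 2 e^{-c_* n}$ with $c_* = -\log(1-\sigma^*_-)/2$. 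To transfer this to the observation process, pick centered $f \in \Lbf^2(\sigma(Y_i, i \leq 0))$ and $g \in \Lbf^2(\sigma(Y_i, i \geq n))$, and condition on the full trajectory $(X_t)_{t \in \Zbb}$: the HMM conditional-independence structure gives $\Ebb^*[fg \mid (X_t)] = \Ebb^*[f \mid (X_t)_{t \leq 0}]\, \Ebb^*[g \mid (X_t)_{t \geq n}]$, hence $\Ebb^*[fg] = \Ebb^*[FG]$ for centered $F := \Ebb^*[f \mid (X_t)_{t \leq 0}]$ and $G := \Ebb^*[g \mid (X_t)_{t \geq n}]$ with $\|F\|_2 \leq \|f\|_2$, $\|G\|_2 \leq \|g\|_2$. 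Therefore $\rho_\mix(n) \leq \rho_X(n) \leq 2 e^{-c_* n} \leq 4 e^{-c_* n}$ for all $n \geq 1$, giving $n_* = 1$.

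\emph{Main obstacle.} The only non-elementary ingredient is the filter stability rate $\rho_* = 1 - \sigma^*_-/\sigma^*_+$: a direct Dobrushin-coefficient analysis of the prediction kernel alone gives only the weaker contraction $1 - \sigma^*_-$ per step, while the sharper rate requires either the Hilbert projective metric / Birkhoff contraction argument specific to Bayesian updating, or an explicit coupling argument exploiting the ratio bound $q^*(x,y)/q^*(x',y) \leq \sigma^*_+/\sigma^*_-$. All remaining ingredients---Doeblin's uniform ergodicity bound, Bradley's inequality, and the conditioning trick transferring mixing from $(X_t)_t$ to $(Y_t)_t$---are standard.
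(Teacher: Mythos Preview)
Your proposal is correct. For \textbf{[A$\star$forgetting]} you are spelling out essentially the argument behind the paper's reference (Douc, Moulines, Ryd\'en 2004, proof of Lemma~2): express the predictive density through the optimal filter, invoke filter stability under the two-sided bound on $q^*$, and convert to a log-ratio bound; your identification of the Birkhoff/Hilbert-metric contraction as the source of the sharp rate $\rho_* = 1 - \sigma^*_-/\sigma^*_+$ is exactly the point.

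For \textbf{[A$\star$mixing]} you take a genuinely different route from the paper. The paper shows $\phi$-mixing of $(Y_t)_t$ \emph{directly}: it conditions on the single variable $X_n$, uses that $A \in \sigma(Y_t, t \geq n)$ and $B \in \sigma(Y_t, t \leq 0)$ are conditionally independent given $X_n$, and bounds $|\Pbb^*(A\mid B) - \Pbb^*(A)|$ by the total-variation distance between the law of $X_n$ and its law conditional on $B$; then it applies Bradley's inequality $\rho \leq 2\sqrt{\phi}$ to $(Y_t)_t$. You instead establish $\rho$-mixing of the hidden chain $(X_t)_t$ first (Doeblin $\Rightarrow$ uniform ergodicity $\Rightarrow$ $\phi$-mixing of $X$ $\Rightarrow$ Bradley), and then transfer $\rho$-mixing from $(X_t)_t$ to $(Y_t)_t$ by conditioning on the \emph{full} trajectory $(X_t)_{t \in \Zbb}$ and using the contraction of conditional expectation in $\Lbf^2$. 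Both routes yield the stated constants; yours gives a slightly sharper numerical bound ($\rho_\mix(n) \leq 2(1-\sigma^*_-)^{n/2}$ versus the paper's $2\sqrt{2}(1-\sigma^*_-)^{n/2}$) and is arguably cleaner, while the paper's route has the advantage of producing $\phi$-mixing of $(Y_t)_t$ as a byproduct.
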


\begin{proof}
This lemma follows from the geometric ergodicity of the HMM.

For \ref{Astar_forgetting}, see for instance \cite{douc2004asymptotic}, proof of Lemma 2.

For \ref{Astar_mixing}, the Doeblin condition implies that for all distributions $\pi$ and $\pi'$ on $\Xcal$,
\begin{align*}
\int | p^*(X_n = x | X_0 \sim \pi) - p^*(X_n = x | X_0 \sim \pi') | \mu(dx) \leq (1 - \sigma_-^*)^n \| \pi - \pi'\|_1.
\end{align*}

Let $A \in \sigma(Y_t, t \geq k)$ and $B \in \sigma(Y_t, t \leq 0)$ such that $\Pbb^*(B) > 0$. Taking $\pi$ the stationary distribution of $(X_t)_t$ and $\pi'$ the distribution of $X_0$ conditionally to $B$ in the above equation implies
\begin{align*}
| \Pbb^*(A | B) - \Pbb^*(A) |
	&= \left| \int \Pbb^*(A | X_n = x) (p^*(X_n = x) - p^*(X_n = x | B)) \mu(dx) \right| \\
	&\leq \int | p^*(X_n = x) - p^*(X_n = x | B) | \mu(dx) \\
	&\leq 2 (1 - \sigma_-^*)^n.
\end{align*}

Therefore, the process $(Y_t)_{t \geq 1}$ is $\phi$-mixing with $\phi_\mix(n) \leq 2 (1 - \sigma_-^*)^n$, so that it is $\rho$-mixing with $\rho_\mix(n) \leq 2 (\phi_\mix(n))^{1/2} \leq 2 \sqrt{2} (1 - \sigma_-^*)^{n/2}$ (see e.g. \cite{bradley2005strongmixingsurvey} for the definition of the $\phi$-mixing coefficient and its relation to the $\rho$-mixing coefficient). One can check that the choice of $c_*$ and $n_*$ allows to obtain \ref{Astar_mixing} from this inequality.
\end{proof}

\subsection{Model assumptions}
\label{sec_model_assumptions}

We now state the assumptions on the models. Let us recall that the distribution of the observed process is not assumed to belong to one of these models.

Consider a family of models $(S_{K,M,n})_{K \in \Nbb^*, M \in \Mcal}$ such that for each $K$ and $M$, the elements of $S_{K,M,n}$ are of the form $(K, \pi, \Qbf, \gamma)$ where $\pi$ is a probability density on $[K]$, $\Qbf$ is a transition matrix on $[K]$ and $\gamma$ is a vector of $K$ probability densities on $\Ycal$ with respect to $\lambda$.

The first assumption is standard in maximum likelihood estimation. It ensures that the process forgets the past exponentially fast, which implies that the difference between the normalized log-likelihood $\frac{1}{n} l_n$ and its limit converges to zero with rate $1/n$ in supremum norm.
\begin{description}
\labitem{[Aergodic]}{Aergodic} There exists $\Csigma \geq 1$ such that for all $(K,\pi,\Qbf,\gamma) \in \Sbf_n$,
\begin{eqnarray*}
\forall x,x' \in [K], \qquad (\Csigma \log n)^{-1} \leq K \Qbf(x,x') \leq \Csigma \log n \\
\text{and} \qquad 
\forall x \in [K], \qquad (\Csigma \log n)^{-1} \leq K \pi(x) \leq \Csigma \log n.
\end{eqnarray*}
\end{description}
For all $\gamma \in \Sbf_n^{(\gamma)}$ and $y \in \Ycal$, let
\begin{equation*}
b_\gamma(y) = \log \left(K^{-1} \sum_{x} \gamma_x(y) \right).
\end{equation*}

When $(K,\pi,\Qbf,\gamma) \in \Sbf_n$, assumption \ref{Aergodic} implies that under the parameters $(K,\pi,\Qbf,\gamma)$, for all $x \in [K]$, the probability to jump to state $x$ at time $t$ is at least $(\Csigma \log n)^{-1} K^{-1}$, whatever the past may be. This implies that the density $p_{(K,\pi,\Qbf,\gamma)}(Y_t | Y_1^{t-1})$ is lower bounded by $(\Csigma \log n)^{-1} K^{-1} \sum_x \gamma_x(Y_t)$. For the same reason, it is upper bounded by $\Csigma (\log n) K^{-1} \sum_x \gamma_x(Y_t)$. Thus, it is enough to bound $b_\gamma$ to control $p_{(K,\pi,\Qbf,\gamma)}$ without having to handle the dependency in past observations.

The following assumption ensures that the log-likelihood rarely takes extreme values.
\begin{description}
\labitem{[Atail]}{Atail}
There exists $\CB \geq 1$ such that
\begin{equation*}
\forall u \geq 1, \quad \Pbb^* \left[ \sup_{\gamma \in \Sbf_n^{(\gamma)}} \left| b_\gamma(Y_1) \right| \geq \CB (\log n) u \right] \leq e^{-u}.
\end{equation*}
\end{description}
In practice, it is enough to check the upper deviations, as shown in the following lemma.
\begin{lemma}
\label{lemma_atail}
Assume that there exists $C \geq 1$ such that
\begin{equation*}
\forall u \geq 1, \quad \Pbb^* \left[ \sup_{\gamma \in \Sbf_n^{(\gamma)}} b_\gamma(Y_1) \geq C (\log n) u \right] \leq e^{-u}.
\end{equation*}

Consider a new model where all $\gamma$ are replaced by $\gamma' = (1 - n^{-a}) \gamma + n^{-a}$ for a fixed constant $a > 0$. Then \ref{Atail} holds for this new model with $\CB = C \vee a$.
\end{lemma}

Changing the densities as in the lemma amounts to adding a mixture component (with weight $n^{-a}$ and distribution $\lambda$) to the emission densities to make sure that they are uniformly lower bounded. We shall see in the following that if $a \geq 1$, then this additional component changes nothing to the approximation properties of the models, see the proof of Corollary~\ref{cor_mixture_minimax_estimation}. This is in agreement with the fact that this component is asymptotically never observed as soon as $a > 1$.

The following assumption means that as far as the bracketing entropy is concerned, the set of emission densities of the model $S_{K,M,n}$ behaves like a parametric model with dimension $m_M$.
\begin{description}
\labitem{[Aentropy]}{Aentropy}
There exists a function $(M, K, D, n) \longmapsto \Caux(M, K, D, n) \geq 1$ and a sequence $(m_M)_{M \in \Mcal} \in \Nbb^\Mcal$ such that for all $\delta > 0$, $M$, $K$, $n$ and $D$,
\begin{multline}
\label{eq_controle_entropie_classe_gamma}
N\left(\left\{ y \mapsto \gamma_x(y) \one_{\underset{\gamma' \in \Sbf_n^{(\gamma)}}{\sup} | b_{\gamma'}(y) | \leq D} \right\}_{\gamma \in S_{K,M,n}^{(\gamma)}, x \in [K]}, d_\infty, \delta \right) \\
	\leq \max \left( \frac{\Caux(M, K, D, n)}{\delta} , 1\right)^{m_M},
\end{multline}
where $d_\infty$ is the supremum norm distance and $N\left(A, d, \epsilon \right)$ is the smallest number of brackets of size $\epsilon$ for the distance $d$ needed to cover $A$. Let us recall that the bracket $[a,b]$ is the set of functions $f$ such that $a(\cdot) \leq f(\cdot) \leq b(\cdot)$, and that the size of the bracket $[a,b]$ is $d(a,b)$.
\end{description}
Note that we allow the models to depend on the sample size $n$, which can make $\Caux$ grow to infinity with $n$. The following assumption ensures that the models do not grow absurdly fast.
\begin{description}
\labitem{[Agrowth]}{Agrowth} There exist $\zeta > 0$ and $n_\text{growth}$ such that for all $n \geq n_\text{growth}$,
\begin{equation*}
\sup_{K, M \text{ s.t. } K \leq n \text{ and } m_M \leq n} \log \Caux(M,K, 3\CB (\log n)^2, n) \leq n^\zeta.
\end{equation*}
\end{description}
A typical way to check \ref{Aentropy} is to use a parametrization of the emission densities, for instance a lipschitz application $[-1,1]^{m_M} \longrightarrow S^{(\gamma)}_{K,M,n}$. This reduces the construction of a bracket covering on $S^{(\gamma)}_{K,M,n}$ to the construction of a bracket covering of the unit ball of $\Rbb^{m_M}$. In this case, $\Caux$ depends on the lipschitz constant of the parametrization. Baring models $\Sbf_n^{(\gamma)}$ that grow so fast with respect to $n$ that \ref{Aentropy} becomes essentially meaningless, \ref{Agrowth} is usually immediately checked once \ref{Aentropy} is established. An example of this approach is given in Section~\ref{sec_minimax} for mixtures of exponential power distributions.

\subsection{Limit and properties of the normalized log-likelihood}
\label{sec_prop_loglikelihood}

In this section, we focus on the convergence of the normalized log-likelihood.
\begin{lemma}[\cite{barron1985entropyrate}]
\label{lemma_Barron}
Assume that the process $(Y_t)_{t \geq 1}$ is ergodic, then there exists a quantity ${l^* > -\infty}$ such that
\begin{equation*}
\frac{1}{n} l_n^* \underset{n \rightarrow \infty}{\longrightarrow} l^* \quad \text{a.s.}
\end{equation*}
and
\begin{equation*}
l^* = \lim_{n \rightarrow \infty} \Ebb^*[ \log p^*(Y_n | Y_1^{n-1}) ].
\end{equation*}
\end{lemma}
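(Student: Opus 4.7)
The plan is to appeal to the generalized Shannon--McMillan--Breiman theorem of \cite{barron1985entropyrate} applied to the two-sided stationary extension $(Y_t)_{t \in \Zbb}$ of the process. By the chain rule,
\[
\frac{1}{n} l_n^* = \frac{1}{n} \sum_{k=1}^n \log p^*(Y_k \mid Y_1^{k-1}),
\]
so the lemma reduces to the a.s.\ convergence of these Cesàro averages together with the identification of their limit.

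The first step is to introduce, for each $m \geq 0$, the strictly stationary sequence $Z_k^{(m)} := \log p^*(Y_k \mid Y_{k-m}^{k-1})$ for $k \in \Zbb$. Birkhoff's ergodic theorem applied to $(Z_k^{(m)})_k$ gives $\tfrac{1}{n}\sum_{k=1}^n Z_k^{(m)} \to \Ebb^*[Z_0^{(m)}]$ almost surely as $n \to \infty$, for each fixed $m$. Letting $m \to \infty$, Lévy's martingale convergence theorem applied to the conditional densities yields $Z_0^{(m)} \to Z_0^{(\infty)} := \log p^*(Y_0 \mid Y_{-\infty}^{-1})$ almost surely and in $L^1$. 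The map $m \mapsto \Ebb^*[Z_0^{(m)}]$ is nondecreasing since $\Ebb^*[Z_0^{(m)}] - \Ebb^*[Z_0^{(m-1)}]$ is a conditional mutual information, hence nonnegative; it therefore converges monotonically to $l^* := \Ebb^*[Z_0^{(\infty)}]$. By stationarity, $\Ebb^*[\log p^*(Y_n \mid Y_1^{n-1})] = \Ebb^*[Z_0^{(n-1)}]$, which identifies $l^* = \lim_n \Ebb^*[\log p^*(Y_n \mid Y_1^{n-1})]$. Finiteness $l^* > -\infty$ then follows because $\Ebb^*[Z_0^{(m)}] \geq \Ebb^*[Z_0^{(0)}] = \Ebb^*[\log p^*(Y_0)] \geq 0$, the last inequality being Jensen applied to the density $p^*$ against the probability measure $\lambda$.

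The main obstacle is the passage from the \emph{stationary} averages $\tfrac{1}{n}\sum_k Z_k^{(m)}$ (for fixed $m$) to the \emph{non-stationary} average $\tfrac{1}{n} l_n^* = \tfrac{1}{n}\sum_{k=1}^n Z_k^{(k-1)}$, in which each summand conditions on a different and growing window. I would follow Barron's double-truncation strategy: for each fixed $m$, bound the first $m$ terms crudely so that they contribute $o(1)$ after division by $n$, and for $k > m$ exploit the identity
\[
Z_k^{(k-1)} - Z_k^{(m)} = \log \frac{p^*(Y_k \mid Y_1^{k-1})}{p^*(Y_k \mid Y_{k-m}^{k-1})}
\]
together with Fatou-type inequalities applied to the nonnegative likelihood ratios to show that the Cesàro average of this difference vanishes, first as $n \to \infty$ and then as $m \to \infty$. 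Sandwiching $\limsup$ and $\liminf$ of $\tfrac{1}{n}l_n^*$ between $\Ebb^*[Z_0^{(m)}]$ (for arbitrary $m$) and $l^*$ then yields the claim. The genuinely delicate point -- and the reason only ergodicity is needed, with no mixing or finite-memory structure -- is the failure of uniform integrability of $Z_k^{(k-1)}$ as $k$ varies; Barron resolves this via a Chung-style martingale splitting, which is the technical heart of the argument I would import wholesale.
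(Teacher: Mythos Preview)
The paper does not supply its own proof of this lemma: it is stated as a citation of \cite{barron1985entropyrate} and used as a black box. So there is no in-paper argument to compare against; your sketch is effectively an attempt to reproduce Barron's original proof of the generalized Shannon--McMillan--Breiman theorem.

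As such a reproduction, your outline is broadly correct and follows the standard sandwich strategy (finite-memory Markov approximations from one side, the full stationary conditional from the other, with the ergodic theorem handling each fixed $m$ and a likelihood-ratio/Markov-inequality argument controlling the discrepancy). Your identification of $l^*$ via monotonicity of $m \mapsto \Ebb^*[Z_0^{(m)}]$ is right, and the observation that $\Ebb^*[\log p^*(Y_0)] = KL(\Pbb^*_{Y_0} \,\|\, \lambda) \geq 0$ (valid here because $\lambda$ is a \emph{probability} measure) is a clean way to get $l^* > -\infty$; this actually uses a feature of the paper's setup that Barron's general theorem does not assume. The one place where your write-up is too vague to stand as a proof is the ``main obstacle'' paragraph: the phrases ``Fatou-type inequalities applied to the nonnegative likelihood ratios'' and ``Chung-style martingale splitting'' gesture at the right tools but do not carry out the sandwich, and in particular you do not say which direction of the inequality comes from which comparison. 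Since you explicitly acknowledge you would import Barron's argument wholesale at that point, and since the paper itself does exactly that by citation, this is not a gap so much as an honest deferral.
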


The second result follows from Theorem 2 of \cite{leroux92MLEHMM}.
\begin{lemma}[\cite{leroux92MLEHMM}]
\label{lemma_Leroux}
Let $K$ be a positive integer, $\gamma$ a vector of $K$ probability densities, $\Qbf$ a transition matrix of size $K$ and $\pi$ a probability measure on $[K]$. 
Assume that the process $(Y_t)_{t \geq 1}$ is ergodic and that $\pi(x) > 0$ and $\Ebb^* |\log \gamma_x(Y_1) | < +\infty$ for all $x \in [K]$.

Then there exists a finite quantity $l(K,\Qbf,\gamma)$ which does not depend on $\pi$ such that
\begin{equation*}
\frac{1}{n} l_n(K,\pi,\Qbf,\gamma) \underset{n \rightarrow \infty}{\longrightarrow} l(K,\Qbf,\gamma) \quad \Pbb^*\text{-a.s.} \text{ and in } \Lbf^1(\Pbb^*).
\end{equation*}

In particular, $l(K,\Qbf,\gamma) = \lim_n \Ebb[\frac{1}{n} l_n(K,\pi,\Qbf,\gamma)]$.
\end{lemma}

When appropriate, we define $\Kbf(K, \Qbf, \gamma)$ by
\begin{align*}
\Kbf(K, \Qbf, \gamma) := l^* - l(K, \Qbf, \gamma).
\end{align*}

Note that $\Kbf(K,\Qbf,\gamma) \geq 0$ since it is the limit of a sequence of Kullback-Leibler divergences: under the assumptions of Lemma~\ref{lemma_Leroux},
\begin{align*}
\Kbf(K, \Qbf, \gamma) = \lim_{n \rightarrow \infty} \frac{1}{n} KL ( \Pbb^*_{Y_1^n} \| \Pbb_{Y_1^n | (K,\pi,\Qbf,\gamma)})
\end{align*}
where $\Pbb^*_{Y_1^n}$ (respectively $\Pbb_{Y_1^n | (K,\pi,\Qbf,\gamma)}$) is the distribution of $Y_1^n$ under $\Pbb^*$ (respectively $\Pbb_{(K,\pi,\Qbf,\gamma)}$).
We will see in the proofs that with some notation abuses:
\begin{align*}
\Kbf(K,\Qbf,\gamma) &= \Ebb^* \left[
	\log \left( \frac{p^*(Y_1 | Y_{-\infty}^0)}{p_{(K,\Qbf,\gamma)}(Y_1 | Y_{-\infty}^0)} \right)
\right] \\
	&= \Ebb^*_{Y_{-\infty}^0} \left[ KL( \Pbb^*_{Y_1 | Y_{-\infty}^0} \| \Pbb_{Y_1 | Y_{-\infty}^0, (K,\Qbf,\gamma)} ) \right].
\end{align*}

Thus, $\Kbf(K,\Qbf,\gamma)$ can be seen as a Kullback Leibler divergence that measures the difference between the distribution of $Y_1$ conditionally to the whole past under the parameter $(K,\Qbf,\gamma)$ and under the true distribution. In a way, it is a prediction error under the parameter $(K,\Qbf,\gamma)$.

In the particular case where the true distribution of $(Y_t)_t$ comes from a finite state space hidden Markov model, $\Kbf$ characterizes the true parameters, up to permutation of the hidden states, provided the emission densities are all distinct and the transition matrix is invertible, as shown in the following result.
\begin{lemma}[\cite{AH14}, Theorem 5]
\label{lemma_identifiabilite_Kbf}
Assume $(Y_t)_t$ is generated by a finite state space HMM with parameters $(K^*, \pi^*, \Qbf^*, \gamma^*)$. Assume $\Qbf^*$ is invertible and ergodic, that the emission densities $(\gamma^*_x)_{x \in [K^*]}$ are all distinct and that $\Ebb^* \left[ (\log \gamma^*_x(Y_1))^+ \right] < \infty$ for all $x \in [K^*]$ (so that $l^* < \infty$).

Then for all $K \leq K^*$, for all transition matrices $\Qbf$ of size $K$ and for all $K$-uples of probability densities $\gamma$, ${\Kbf(K,\Qbf,\gamma) = 0}$ if and only if $(K,\Qbf,\gamma) = (K^*, \Qbf^*, \gamma^*)$ up to permutation of the hidden states.
\end{lemma}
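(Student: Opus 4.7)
The argument splits into two parts: nonnegativity follows directly from the representation of $\Kbf$ as a (conditional) Kullback--Leibler divergence, while the equality case reduces to finite-state HMM identifiability. For nonnegativity, I would invoke the formula stated right after Lemma~\ref{lemma_ecart_Likx},
\[
\Kbf(K,\Qbf,\gamma) = \Ebb^*\Bigl[ KL\bigl(\Pbb^*_{Y_1 | Y_{-\infty}^0} \bigm\| \Pbb_{Y_1 | Y_{-\infty}^0, (K,\Qbf,\gamma)}\bigr) \Bigr],
\]
whose integrand is nonnegative as a Kullback--Leibler divergence between two probability measures, giving $\Kbf(K,\Qbf,\gamma) \geq 0$.

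Now suppose $\Kbf(K,\Qbf,\gamma) = 0$ with $K \leq K^*$. The integrand above then vanishes $\Pbb^*$-almost surely, so $\Pbb^*_{Y_1 | Y_{-\infty}^0} = \Pbb_{Y_1 | Y_{-\infty}^0, (K,\Qbf,\gamma)}$ $\Pbb^*$-a.s. Because $\Pbb^*$ is itself generated by a $K^*$-state HMM, its predictive density takes the mixture form $y \mapsto \sum_{x \in [K^*]} \mu^*_1(x)\, \gamma^*_x(y)$ with $\mu^*_1(\cdot) = \Pbb^*(X_1 = \cdot \mid Y_{-\infty}^0)$; the model's predictive density takes the analogous form $y \mapsto \sum_{x \in [K]} \mu^\theta_1(x)\, \gamma_x(y)$ for its own filter $\mu^\theta_1$. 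The almost sure equality then becomes the mixture identity
\[
\sum_{x \in [K^*]} \mu^*_1(x)\, \gamma^*_x(\cdot) = \sum_{x \in [K]} \mu^\theta_1(x)\, \gamma_x(\cdot) \qquad \lambda\text{-a.e.,}
\]
valid for $\Pbb^*$-a.e. realization of the past.

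The last step is to apply finite mixture identifiability. Distinct probability densities are linearly independent, and the invertibility and ergodicity of $\Qbf^*$ ensure that the law of $\mu^*_1$ charges a subset of the simplex $\Delta([K^*])$ rich enough to make the above mixture identity force an agreement of parameters. Since $K \leq K^*$, this yields that $(\gamma_x)_{x \in [K]}$ is a permutation of $(\gamma^*_x)_{x \in [K^*]}$---forcing $K = K^*$---and that $\mu^\theta_1$ equals $\mu^*_1$ up to the same permutation $\sigma$ $\Pbb^*$-a.s. Propagating this equality of filters through one step of the filtering recursion then recovers $\Qbf = \Qbf^*$ up to $\sigma$. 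I expect the main obstacle to be exactly this identifiability step: one has to show rigorously that the filter $\mu^*_1$ takes sufficiently many values in the simplex for the pointwise mixture identity to uniquely determine the emission densities and, from there, the transition matrix---which is precisely the technical content of Theorem~5 of \cite{AH14}.
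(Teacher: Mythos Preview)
The paper does not give a proof of this lemma; it is simply stated with the attribution to Theorem~5 of \cite{AH14}. So there is nothing in the paper to compare your argument against beyond that citation, and in fact your final sentence correctly defers the substantive identifiability work to exactly that reference.

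That said, your sketch contains a genuine error: the claim ``distinct probability densities are linearly independent'' is false. If $f_1,f_2$ are distinct densities, then $f_3=\tfrac12(f_1+f_2)$ is a third distinct density and the three are linearly dependent. Consequently the mixture identity $\sum_x \mu^*_1(x)\gamma^*_x = \sum_x \mu^\theta_1(x)\gamma_x$ does not, by itself, force $(\gamma_x)_x$ to be a permutation of $(\gamma^*_x)_x$, even granting that the filter $\mu^*_1$ ranges over a rich subset of the simplex. The argument in \cite{AH14} does not proceed through the one-step predictive density in this way: one first shows that $\Kbf=0$ forces the full stationary law of $(Y_t)_t$ to coincide under both parameter sets (equivalently, all finite-dimensional marginals agree), and then invokes an identifiability theorem for the HMM parameters from that law, using the invertibility of $\Qbf^*$ and the distinctness of the emission densities. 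Your nonnegativity argument is fine, though the paper notes an even simpler route just after Lemma~\ref{lemma_Leroux}: $\Kbf$ is the limit of $\tfrac{1}{n}KL(\Pbb^*_{Y_1^n}\|\Pbb_{Y_1^n|(K,\pi,\Qbf,\gamma)})\geq 0$, which avoids relying on the conditional representation of Lemma~\ref{lemma_ecart_Likx}.
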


\section{Main results}
\label{sec_main}

\subsection{Oracle inequality for the prediction error}
\label{sec_oracle}

The following theorem states an oracle inequality on the prediction error of our estimator. It shows that with high probability, our estimator performs as well as the best model of the class in terms of Kullback Leibler divergence, up to a multiplicative constant and up to an additive term decreasing as $\frac{(\log n)^{\cdots}}{n}$, provided the penalty is large enough.

\begin{theorem}
\label{th_oracle_simplifie}
Assume \ref{Astar_forgetting}, \ref{Astar_mixing}, \ref{Astar_tail},
\ref{Aergodic}, \ref{Atail}, \ref{Aentropy} and \ref{Agrowth} hold.

Let $(w_M)_{M \in \Mcal}$ be a nonnegative sequence such that $\sum_{M \in \Mcal} e^{-w_M} \leq e-1$.
For all $K$ and $M$, let
\begin{equation*}
{(K, \hat\pi_{K,M,n}, \hat\Qbf_{K,M,n}, \hat\gamma_{K,M,n})}
	\in \underset{(K,\pi,\Qbf,\gamma) \in S_{K,M,n}}{\argmax} \; \frac{1}{n} l_n(K,\pi,\Qbf,\gamma),
\end{equation*}
\begin{equation*}
(\hat{K}, \hat{M}) \in \underset{M \text{ s.t. } m_M \leq n}{\underset{K \leq n}{\argmax}} \left( \frac{1}{n} l_n(K, \hat\pi_{K,M,n}, \hat\Qbf_{K,M,n}, \hat\gamma_{K,M,n}) - \pen_n(K,M) \right)
\end{equation*}
and let
$
(\hat{K}, \hat{\pi}, \hat{\Qbf}, \hat{\gamma}) = (\hat{K}, \hat\pi_{\hat{K},\hat{M},n}, \hat\Qbf_{\hat{K},\hat{M},n}, \hat\gamma_{\hat{K},\hat{M},n})
$
be the nonparametric maximum likelihood estimator.

Then there exist constants $A$ and $C_\pen$ depending only on $\Csigma$, $\CB$, $n_*$ and $c_*$ and a constant $n_0$ depending only on $\Csigma$, $\CB$, $n_*$, $\zeta$, $n_\text{growth}$, $C_*$, $\rho_*$, $\delta$ and $M_\delta$ such that for all
$n \geq n_0$, $t \geq 1$ and $\eta \leq 1$, with probability at least $1 - e^{-t} - 2 n^{-2}$,
\begin{multline*}
\Kbf(\hat{K}, \hat{\Qbf}, \hat{\gamma})
	\leq (1+\eta) \underset{M \text{ s.t. } m_M \leq n}{\underset{K \leq n}{\inf}} \Bigg\{ \inf_{(K,\pi,\Qbf,\gamma) \in S_{K,M,n}}\Kbf(K,\Qbf,\gamma) \\
	 + 2 \pen_n(K,M) \Bigg\}
		+ \frac{A}{\eta} t \frac{(\log n)^{10}}{n}
\end{multline*}
as soon as
\begin{multline*}
\pen_n(K,M) \geq
	\frac{C_\pen}{\eta} \frac{(\log n)^{10}}{n} \Bigg\{ w_M
	+ (\log n)^{4} ( m_M K + K^2 - 1) \\
		\times \left( (\log n)^3 \log \log n + \log \Caux(M, K, 3 \CB (\log n)^2, n) \right) \Bigg\}.
\end{multline*}
\end{theorem}

The proof of this theorem is presented in Section~\ref{sec_toute_la_preuve_oracle}. Its structure and main steps are detailed in Section~\ref{sec_structure_preuve_oracle}, and the proof of these steps are gathered in Section~\ref{sec_preuves_techniques_oracle}.

Note that this theorem is not specific to one choice of the parametric models $S_{K,M,n}$: one may choose the type of model that suits the density one wants to estimate best. In the following section, we use mixture models to estimate densities when $\Ycal$ is unbounded. If $\Ycal$ is compact, we could use $\Lbf^2$ spaces and this oracle inequality would still hold.

The powers of $\log n$ 
come from:
\begin{itemize}
\item The limitation of the dependency to the $\log n$ most recent observations,

\item The dependency of the bounds $\Csigma \log n$ and $\CB \log n$ on $n$ in assumptions \ref{Aergodic} and \ref{Atail},

\item Truncating the emission log-densities (possible thanks to assumptions \ref{Atail} and \ref{Astar_tail}),

\item The use of a Bernstein inequality for exponentially $\alpha$-mixing processes.
\end{itemize}

\subsection{Minimax adaptive estimation using location-scale mixtures}
\label{sec_minimax}

In this section, we show that the oracle inequality of Theorem~\ref{th_oracle_simplifie} allows to construct an estimator that is adaptive and minimax up to logarithmic factors when the observations are generated by a finite state space hidden Markov model. To do so, we consider models whose emission densities are finite mixtures of exponential power distributions, and use an approximation result by \cite{kruijer2010adaptive}.

Assume that $(Y_t)_{t \geq 1}$ is generated by a stationary HMM with parameters $(K^*, \Qbf^*, \gamma^*)$, which we call the true parameters. Without loss of generality, we identify the true hidden state space with $[K^*]$. We consider the case $\Ycal = \Rbb$ endowed with the probability $\lambda$ with density $\Gref : y \longmapsto (\pi(1 + y^2))^{-1}$ with respect to the Lebesgue measure.

In order to quantify the approximation error by location-scale mixtures, we use the following assumptions from \cite{kruijer2010adaptive}.
\begin{description}
\item[(C1)] \emph{Smoothness}. For all $x \in [K^*]$, $\log (\gamma^*_x \Gref)$ is locally $\beta$-Hölder with $\beta > 0$, i.e. there exist a polynomial $L$ and a constant $R > 0$ such that if $r$ is the largest integer smaller than $\beta$, one has for all $x \in [K^*]$, 
\begin{multline*}
\forall y, y' \text{ s.t. } |y - y'| \leq R, \\
	\left| \frac{\partial^r \log(\gamma^*_x \Gref)}{\partial y^r}(y) - \frac{\partial^r \log(\gamma^*_x \Gref)}{\partial y^r}(y') \right| \leq r! L(y) |y-y'|^{\beta - r}.
\end{multline*}

\item[(C2)] \emph{Moments}. There exists $\epsilon > 0$ such that for all $x \in [K^*]$, 
\begin{gather*}
\forall j \in \{1, \dots, r\}, \quad \int \left| \frac{\partial^j \log(\gamma^*_x \Gref)}{\partial y^j}(y) \right|^{\frac{2\beta + \epsilon}{j}} (\gamma^*_x \Gref)(y) d y < \infty
\\
\int L(y)^{\frac{2\beta + \epsilon}{\beta}} (\gamma^*_x \Gref)(y) d y < \infty
\end{gather*}

\item[(C3)] \emph{Tail}. There exist positive constants $c$ and $\tau$ such that for all $x \in [K^*]$, 
\begin{equation*}
\gamma^*_x \Gref = O(e^{-c |y|^\tau}).
\end{equation*}

\item[(C4)] \emph{Monotonicity}. For all $x \in [K^*]$, $(\gamma^*_x \Gref)$ is positive and there exists $y_m < y_M$ such that for all $x \in [K^*]$, $(\gamma^*_x \Gref)$ is nondecreasing on $(-\infty, y_m)$ and nonincreasing on $(y_M, +\infty)$.
\end{description}

All these assumptions refer to the functions $(\gamma^*_x \Gref)$, which are the densities of the true emission distributions with respect to the Lebesgue measure. Hence, the choice of the dominating measure $\lambda$ does not matter as far as regularity conditions are concerned.

Note that \cite{kruijer2010adaptive} only assumed \textbf{(C3)} outside of a compact set. However, since the regularity assumption \textbf{(C1)} implies that $(\gamma^*_x \Gref)$ is continuous, one may assume \textbf{(C3)} for all $y$ without loss of generality.

It is important to note that even though we require some regularity on the emission densities, for instance through the polynomial $L$ and the constants $\beta$ and $\tau$, we do not need to know them to construct our estimator, thus making it adaptive.
\bigskip

We consider the following models. Let $p \geq 2$ be an even integer and
\begin{equation*}
\psi(y) = \frac{1}{2 \Gamma \left( 1 + \frac{1}{p} \right)} e^{-y^p}.
\end{equation*}

Let $\Mcal = \Nbb^*$. We take $S_{K,M,n}$ as the set of parameters $(K, \pi, \Qbf, \gamma)$ such that
\begin{itemize}
\item \ref{Aergodic} holds with $C_\sigma = 1$,

\item For all $x \in [K]$, there exist $(s_{x,1}, \dots, s_{x,M}) \in [\frac{1}{n} , n]^{M}$, $(\mu_{x,1}, \dots, \mu_{x,M}) \in [-n , n]^{M}$ and $w_x = (w_{x,1}, \dots, w_{x,M}) \in [0,1]^M$ such that $\sum_i w_{x,i} = 1$ and for all $y \in \Rbb$,
\begin{equation*}
\gamma_x(y) = \frac{1}{n^2}
	+ \left( 1 - \frac{1}{n^2} \right) \frac{1}{\Gref(y)}
		\sum_{i=1}^{M} w_{x,i} \frac{1}{s_{x,i}} \psi \left( \frac{y - \mu_{x,i}}{s_{x,i}} \right).
\end{equation*}
In other words, the emission densities are mixtures of $\lambda$ (with weight $n^{-2}$) and of $M$ translations and dilatations of $\psi$.
\end{itemize}

\begin{lemma}[Checking the assumptions]
\label{lemma_mixture_checking_assumptions}
Assume $\inf \Qbf^* > 0$, then:
\begin{itemize}
\item \ref{Astar_forgetting} and \ref{Astar_mixing} hold.

\item Assume \textbf{(C3)}, then \ref{Astar_tail} holds.


\item \ref{Atail} holds for all $n \geq 3$ by taking $\CB = 10$.

\item \ref{Aentropy} and \ref{Agrowth} hold for any $\zeta > 0$ by taking $m_M = 2M$ and $\Caux(M, K, D, n) = 4 p n^3$, for instance $\zeta = 2$ and $n_\text{growth} = 4p$.
\end{itemize}
\end{lemma}

\begin{proof}
The first point follows from Lemma~\ref{lemma_exempleAppli_Astar_HMMcompact}.
The second point follows from the fact that the densities $\gamma^*_x$ are uniformly bounded under \textbf{(C3)}.

See Section~\ref{sec_preuve_lemma_mixture_checking_assumptions} for the proof of the last two points.
\end{proof}

\begin{remark}
The results of this section remain the same when the weight of $\lambda$ in the emission densities of $S_{K,M,n}$ is allowed to be larger than $n^{-2}$ instead of being exactly $n^{-2}$.
\end{remark}

Lemma~4 from \cite{kruijer2010adaptive} implies the following result.
\begin{lemma}[Approximation rates]
\label{lemma_mixture_approximation_rates}
Assume \textbf{(C1)}-\textbf{(C4)} hold. Then there exists sequences of mixtures $(g_{M,x})_M$ for each $x \in [K^*]$ such that for $M$ large enough and all $n \geq M$, $(n^{-2} + (1 - n^{-2}) g_{M,x})_{x \in [K^*]} \in S_{K^*,M,n}^{(\gamma)}$ and
\begin{equation*}
\max_{x \in [K^*]} KL( \gamma^*_x \| g_{M,x}) = O(M^{-2\beta} (\log M)^{2\beta \frac{p}{\tau}}).
\end{equation*}
\end{lemma}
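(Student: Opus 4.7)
The plan is to apply Lemma~4 of \cite{kruijer2010adaptive} to each emission density $\gamma^*_x \Gref$, viewed as a probability density with respect to the Lebesgue measure on $\Rbb$, and then to translate the approximation bound back to the dominating measure $\lambda$ used in the model.

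First, I would invoke Lemma~4 of \cite{kruijer2010adaptive} on the function $\gamma^*_x \Gref$: assumptions \textbf{(C1)}--\textbf{(C4)} coincide with the hypotheses of that lemma, so for each $M$ it produces a mixture
\begin{equation*}
\tilde{g}_{M,x}(y) = \sum_{i=1}^M w_{x,i} \frac{1}{s_{x,i}} \psi\left( \frac{y - \mu_{x,i}}{s_{x,i}} \right)
\end{equation*}
with nonnegative weights summing to one, satisfying
\begin{equation*}
\int (\gamma^*_x \Gref)(y) \log \frac{(\gamma^*_x \Gref)(y)}{\tilde{g}_{M,x}(y)} \, dy = O\bigl( M^{-2\beta} (\log M)^{2\beta(1 + p/\tau)} \bigr).
\end{equation*}
A look at their construction shows that the common bandwidth decays polynomially in $M$ (so that $s_{x,i} \in [1/M,1]$ for $M$ large enough) and that the locations lie in a window of size $O((\log M)^{1/\tau})$, in particular within $[-M,M]$ for $M$ large. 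Hence, for $n \geq M$, the tuple $(w_{x,i}, s_{x,i}, \mu_{x,i})_{i \leq M}$ is an admissible parameter of $S_{K^*,M,n}^{(\gamma)}$.

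Second, I would set $g_{M,x}(y) := \tilde{g}_{M,x}(y) / \Gref(y)$, which is the density with respect to $\lambda$ of the probability measure whose Lebesgue density is $\tilde{g}_{M,x}$. Since the Kullback--Leibler divergence between two probability measures does not depend on the dominating measure used to represent them,
\begin{equation*}
KL(\gamma^*_x \| g_{M,x}) = \int \gamma^*_x(y) \log \frac{\gamma^*_x(y) \Gref(y)}{\tilde{g}_{M,x}(y)} \, d\lambda(y) = \int (\gamma^*_x \Gref)(y) \log \frac{(\gamma^*_x \Gref)(y)}{\tilde{g}_{M,x}(y)} \, dy,
\end{equation*}
which gives the claimed rate. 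Writing
\begin{equation*}
\frac{1}{n^2} + \left( 1 - \frac{1}{n^2} \right) g_{M,x}(y) = \frac{1}{n^2} + \left( 1 - \frac{1}{n^2} \right) \frac{1}{\Gref(y)} \sum_{i=1}^M w_{x,i} \frac{1}{s_{x,i}} \psi\left( \frac{y - \mu_{x,i}}{s_{x,i}} \right)
\end{equation*}
then shows that $n^{-2} + (1 - n^{-2}) g_{M,x}$ is of the exact functional form of an element of $S_{K^*,M,n}^{(\gamma)}$, and the parameter bounds from the first step guarantee admissibility whenever $n \geq M$.

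The main obstacle---and the only non-routine part---is the parameter-box check inside Lemma~4 of \cite{kruijer2010adaptive}: one must verify that their bandwidth indeed lies in $[1/M, 1]$ and that their location grid fits in $[-M, M]$ (so that the condition $n \geq M$ suffices). This uses the tail assumption \textbf{(C3)} to bound the locations at scale $(\log M)^{1/\tau}$, together with the observation that truncating the grid outside this window does not worsen the $M^{-2\beta}$ rate by more than the stated logarithmic factors.
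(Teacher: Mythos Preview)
Your approach is the same as the paper's, but you are glossing over the one step that actually produces the logarithmic factor, and in your last paragraph you misidentify its source.

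Lemma~4 of \cite{kruijer2010adaptive} is parametrized by the common bandwidth $u$, not by the number of components. As the paper invokes it, it yields for each small $u>0$ a mixture with $O(u^{-1}|\log u|^{p/\tau})$ components, all locations $\mu$ satisfying $e^{-c|\mu|^\tau}\geq c' u^{H}$, and $KL(\gamma^*_x\|g_{u,x})=O(u^{2\beta})$. There is no statement in terms of $M$ to quote directly. The paper then performs the change of variable $s=u|\log u|^{-1-p/\tau}$ and sets $M=s^{-1}$; this simultaneously guarantees (i) the number of components $O(u^{-1}|\log u|^{p/\tau})\leq M$, (ii) the locations lie in $[-M,M]$ (since $|\mu|=O((\log(1/u))^{1/\tau})\ll s^{-1}$), and (iii) the bandwidth $u\geq s=1/M$. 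The claimed rate then follows from $u\leq s|\log s|^{1+p/\tau}=M^{-1}(\log M)^{1+p/\tau}$, whence $u^{2\beta}=O(M^{-2\beta}(\log M)^{2\beta(1+p/\tau)})$. So the logarithm does not come from ``truncating the grid''---no truncation is needed---but from inverting the relationship between bandwidth and number of components. Your sentence ``for each $M$ it produces a mixture \dots satisfying $KL=O(M^{-2\beta}(\log M)^{2\beta(1+p/\tau)})$'' is precisely the conclusion, not the input from \cite{kruijer2010adaptive}.

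Your treatment of the dominating measure (passing from Lebesgue to $\lambda$ via $g_{M,x}=\tilde g_{M,x}/\Gref$) is correct and in fact more explicit than the paper, which leaves this implicit.
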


\begin{proof}
Proof in Section~\ref{sec_proof_lemma_mixture_approximation_rates}.
\end{proof}

\begin{corollary}[Minimax adaptive estimation rates]
\label{cor_mixture_minimax_estimation}
Assume \textbf{(C1)}-\textbf{(C4)} hold. Also assume that $\inf \Qbf^* > 0$. Then there exists a constant $C > 0$ such that for all $M \geq 3$ and $n \geq M$,
\begin{equation*}
	\inf_{(K^*,\pi,\Qbf,\gamma) \in S_{K^*,M,n}} \Kbf(K^*,\Qbf,\gamma)
		\leq C (\log n)^2 \left(\frac{1}{n} + M^{-2\beta} (\log M)^{2\beta \frac{p}{\tau}} \right)
\end{equation*}

Hence, using Theorem~\ref{th_oracle_simplifie} with $\pen_n(K,M) = (KM+K^2) (\log n)^{18}/n$, there exists a constant $C$ such that almost surely, there exists a (random) $n_0$ such that
\begin{align*}
\forall n \geq n_0, \quad
\Kbf(\hat{K}_n, \hat{\Qbf}_n, \hat{\gamma}_n)
	&\leq C n^{\frac{-2\beta}{2\beta+1}}  (\log n)^{18 + \frac{p}{\tau} - \frac{16 + \frac{p}{\tau}}{2\beta + 1}} \\
	&\leq C n^{\frac{-2\beta}{2\beta+1}} (\log n)^{18 + \frac{p}{\tau}}.
\end{align*}
\end{corollary}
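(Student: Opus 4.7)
The proof splits into two ingredients: (i) a bias bound showing $\inf_{(K^*,\pi,\Qbf,\gamma) \in S_{K^*,M,n}} \Kbf(K^*,\Qbf,\gamma)$ is small, and (ii) an optimization of $M$ through Theorem~\ref{th_oracle_simplifie}. For (i), the plan is to exhibit an explicit candidate: set $\pi = (1/K^*) \one$, $\Qbf = \Qbf^*$, and $\gamma_x = n^{-2} + (1-n^{-2}) g_{M,x}$ with $g_{M,x}$ supplied by Lemma~\ref{lemma_mixture_approximation_rates}. This triple is feasible in $S_{K^*,M,n}$ as soon as $(\log n)^{-1} \leq \inf \Qbf^* \wedge (1/K^*)$; the finite exceptional range of $n$ is absorbed into the constant $C$ because the right-hand side of the target inequality is bounded below. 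Since $\Kbf$ does not depend on the initial distribution (Lemma~\ref{lemma_Leroux}) and the hidden chain has the same law under $\Pbb^*$ and under the candidate (both using $\Qbf^*$), the data-processing inequality applied to $(X_1^n, Y_1^n) \mapsto Y_1^n$ together with the chain rule gives
\[ KL(\Pbb^*_{Y_1^n} \| \Pbb_{Y_1^n \mid (K^*,\pi,\Qbf^*,\gamma)}) \leq \sum_{t=1}^n \Ebb^*[KL(\gamma^*_{X_t} \| \gamma_{X_t})] = n \sum_x \pi^*_\infty(x) KL(\gamma^*_x \| \gamma_x). \]
Dividing by $n$ and passing to the limit yields $\Kbf(K^*,\Qbf^*,\gamma) \leq \sum_x \pi^*_\infty(x) KL(\gamma^*_x \| \gamma_x)$, and the pointwise bound $\gamma_x \geq (1-n^{-2}) g_{M,x}$ combined with Lemma~\ref{lemma_mixture_approximation_rates} controls each summand by $M^{-2\beta}(\log M)^{2\beta(1+p/\tau)} + O(n^{-2})$, which sits comfortably inside the stated upper bound.

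For (ii), I would invoke Theorem~\ref{th_oracle_simplifie}, whose assumptions are checked by Lemma~\ref{lemma_mixture_checking_assumptions} with $q = 1$, $m_M = 2M$, $B(n) = 5 \log n$, and $\log \Caux = O(\log n)$. Picking summable weights $w_M = 2 \log(M+1)$ and a fixed $\eta$ (say $\eta = 1/2$), the penalty lower bound in the theorem reduces to a constant multiple of $(KM + K^2)(\log n)^{15}/n$, so the prescribed $\pen_n(K,M) = (KM + K^2)(\log n)^{15}/n$ is admissible for $C_\pen$ large enough. Restricting the outer infimum in the oracle bound to $K = K^*$ and substituting the bias bound from the first paragraph, the right-hand side is dominated, up to absolute constants, by
\[ M^{-2\beta}(\log M)^{2\beta(1+p/\tau)}(\log n)^9 + M (\log n)^{15}/n + t (\log n)^8/n. \]
Balancing the first two terms produces $M_n \sim n^{1/(2\beta+1)} (\log n)^{(2\beta(1+p/\tau)-6)/(2\beta+1)}$, which delivers the rate $n^{-2\beta/(2\beta+1)}$ times $\log n$ raised to $(32\beta + 9 + 2\beta p/\tau)/(2\beta+1)$. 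The identity $(16 + p/\tau)(2\beta+1) - (7+p/\tau) = 32\beta + 9 + 2\beta p/\tau$ rewrites this exponent as $16 + p/\tau - (7 + p/\tau)/(2\beta+1)$, matching the claim.

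Finally, to upgrade the high-probability bound to an almost-sure one, I would take $\alpha = 2$ and $t_n = 2 \log n$ in the theorem: the failure probability is then $O(n^{-2})$, hence summable, so Borel--Cantelli supplies a random $n_0$ past which the oracle inequality holds for every $n$. The factor $t_n = 2\log n$ merely promotes the residual $(\log n)^8/n$ to $(\log n)^9/n$, dominated by the main rate. The delicate step is the feasibility check in (i): if $\inf \Qbf^* < (\log n)^{-1}$ for some $n$, the clean choice $\Qbf = \Qbf^*$ fails, and one either absorbs the finite exceptional range into $C$ or perturbs to $\Qbf_n = (1-\epsilon_n) \Qbf^* + \epsilon_n K^{*-1} \one \one^\top$ with $\epsilon_n \sim 1/\log n$, then invokes the forgetting bound of Lemma~\ref{lemma_ecart_Likx} to show the resulting change $\Kbf(K^*, \Qbf_n, \gamma) - \Kbf(K^*, \Qbf^*, \gamma) = O((\log n)^{-2})$, which fits inside the slack term $(\log n)^2/n$. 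Beyond this perturbation issue, the rest of the argument is routine bookkeeping of $\log n$ factors.
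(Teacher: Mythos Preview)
Your proof is correct, and your approach to the bias bound (part (i)) is genuinely different from---and sharper than---the paper's. The paper reaches the first inequality by passing through the Hellinger distance: it proves $\Kbf \leq 5C_B(\log n)^2(\Hbf^2 + 3/n)$ via a truncation argument, and then controls $\Hbf^2(K^*,\Qbf^*,\gamma_M)$ using a filter-stability estimate (Proposition~2.1 of de~Castro et~al.) that compares the smoothing distributions under $\gamma^*$ and $\gamma_M$; this costs a factor $\sigma_-(n)^{-3}(1-\rho)^{-4} \sim (\log n)^7$, which together with the $(\log n)^2$ from the KL--Hellinger comparison produces the $(\log n)^9$ in the statement. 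Your data-processing argument sidesteps all of this: since you use the true $\Qbf^*$, the hidden chain has the same law under $\Pbb^*$ and under the candidate, so the chain rule plus monotonicity of KL under marginalization gives $\Kbf(K^*,\Qbf^*,\gamma) \leq \sum_x \pi^*_\infty(x)\, KL(\gamma^*_x \| \gamma_x)$ directly, with no logarithmic loss. This is simpler and strictly stronger; the paper's route would be needed only if one could not take $\Qbf = \Qbf^*$ exactly.

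For part (ii) and the Borel--Cantelli upgrade, your argument matches the paper's essentially line for line (same choice $\alpha=2$, $t=2\log n$, same $M_n$). One small blemish: in your closing sentence the backup perturbation argument is flawed---an error of order $O((\log n)^{-2})$ does \emph{not} fit inside the slack $(\log n)^2/n$, and Lemma~\ref{lemma_ecart_Likx} alone does not give Lipschitz continuity of $\Kbf$ in $\Qbf$. This is harmless because your primary alternative (absorb the finite exceptional range of $n$ into $C$, using that $\Kbf$ is uniformly bounded on $\Sbf_n$ since all emission densities are $\geq n^{-2}$) is valid, so the proof stands; but you should drop the perturbation remark or replace it with a correct estimate.
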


\begin{proof}
Proof in Section~\ref{sec_proof_cor_mixture_minimax_estimation}.
\end{proof}

This result shows that our estimator reaches the minimax rate of convergence proved by \cite{maugis2013adaptive} for density estimation in Hellinger distance, up to logarithmic factors. Since estimating a density is the same thing as estimating a one-state HMM, this means that our result is adaptive and minimax up to logarithmic factors when $K^* = 1$. As far as we know, it is still unknown whether increasing the number of states improves the minimax rates of convergence. It seems reasonable to think that it doesn't, which would imply that our estimator is in general adaptive and minimax.

\section{Perspectives}
\label{sec_perspectives}

The main result of this paper is a guarantee that maximum likelihood estimators based on nonparametric hidden Markov models give sensible results even in the misspecified setting, and that their error can be controlled nonasymptotically.
Two properties of both the models and the true distributions are at the core of this result: a mixing property and a forgetting property, which can be seen as a local dependence property.

These two properties are not specific to hidden Markov models. Therefore, it is likely that our result can be generalized to many other models and distributions. To name a few, one could consider hidden Markov models with continuous state space as studied in~\cite{douc2001asymptotics} or~\cite{douc2011consistency}, or more generally partially observed Markov models, see for instance~\cite{douc2016posterior} and reference therein. Special cases of partially observed Markov models are HMMs with autoregressive properties~\citep{douc2004asymptotic} and models with time inhomogeneous Markov regimes~\citep{pouzo2016misspecified}. One could also consider hidden Markov fields~\citep{kunsch1995hiddenMarkovFields} and graphical models to generalize to more general distributions than time processes.

Another interesting approach is to consider other forgetting and mixing assumptions. For instance, \cite{legland2000forgetting} state a more general version of the forgetting assumption where the constant is replaced by an almost surely finite random variable, and \cite{gerencser2007forgetting} give conditions under which the moments of this random variable are finite. Other mixing and weak dependence conditions have also been introduced in the litterature with the hope of describing more general processes, see for instance~\cite{dedecker2007weakdependence}.

\section{Proof of the oracle inequality (Theorem~\ref{th_oracle_simplifie})}
\label{sec_toute_la_preuve_oracle}

\subsection{Overview of the proof}
\label{sec_structure_preuve_oracle}

By definition of $(\hat{K}, \hat{\pi}, \hat{\Qbf}, \hat{\gamma})$, one has for all $K \leq n$, for all $M$ such that $m_M \leq n$ and for all ${(K, \pi_{K,M}, \Qbf_{K,M}, \gamma_{K,M}) \in S_{K,M,n}}$:
\begin{multline*}
\frac{1}{n} l_n^* - \frac{1}{n} l_n(\hat{K}, \hat{\pi}, \hat{\Qbf}, \hat{\gamma})
	\leq \frac{1}{n} l_n^* - \frac{1}{n} l_n(K, \pi_{K,M}, \Qbf_{K,M}, \gamma_{K,M}) \\
		+ \pen_n(K,M) - \pen_n(\hat{K},\hat{M})
\end{multline*}
where $\hat{K}$ and $\hat{M}$ are the selected number of hidden states and model index respectively.

Let
\begin{equation*}
\nu(K, \pi, \Qbf, \gamma) := \left(\frac{1}{n} l_n^* - \frac{1}{n} l_n(K, \pi, \Qbf, \gamma)\right) - \Kbf(K, \Qbf, \gamma),
\end{equation*}
then
\begin{align*}
\Kbf(\hat{K}, \hat{\Qbf}, \hat{\gamma})
	\leq{} & \; \Kbf(K, \Qbf_{K,M}, \gamma_{K,M}) + 2 \pen_n(K,M) \\
		&+ \nu(K, \pi_{K,M}, \Qbf_{K,M}, \gamma_{K,M}) - \pen_n(K,M) \\
		&- \nu(\hat{K}, \hat{\pi}, \hat{\Qbf}, \hat{\gamma}) - \pen_n(\hat{K},\hat{M}).
\end{align*}

Now, assume that with high probability, for all $K$, $M$ and $(K,\pi,\Qbf,\gamma) \in S_{K,M,n}$,
\begin{equation}
\label{eq_goal_concentration_nu}
| \nu(K,\pi,\Qbf,\gamma) | - \pen_n(K,M)
	\leq \eta \Kbf(K,\Qbf,\gamma) + R_n
\end{equation}
for some constant $\eta \in (0,\frac{1}{2})$, some penalty $\pen_n$ and some residual term $R_n$. The above inequality leads to
\begin{align*}
(1 - \eta) \Kbf(\hat{K}, \hat{\Qbf}, \hat{\gamma})
	\leq (1 + \eta) \Kbf(K, \Qbf_{K,M}, \gamma_{K,M}) + 2 \pen_n(K,M) + 2 R_n,
\end{align*}
and the oracle inequality follows by noticing that $\frac{1+\eta}{1-\eta} \leq 1 + 4\eta$ and $\frac{1}{1-\eta} \leq 2$ when $\eta \in (0, \frac{1}{2})$.

Let us now prove equation~\eqref{eq_goal_concentration_nu}.
For all $i \in \Zbb$, $k \in \Nbb^*$, let
\begin{equation}
\label{eq_def_Lik_star}
L_{i,k}^* = \log p^* (Y_i | Y_{i-k}^{i-1}),
\end{equation}
where the process $(Y_t)_{t \geq 1}$ is extended into a process $(Y_t)_{t \in \Zbb}$ by stationarity. Likewise, for all $i \in \Zbb$, $k \in \Nbb^*$, $(K,\pi,\Qbf,\gamma) \in \Sbf_n$ and for all probability distributions $\mu$ on $[K]$, let
\begin{equation*}
L_{i,k,\mu}(K,\Qbf,\gamma) = \log p_{(K,\Qbf,\gamma)} (Y_i | Y_{i-k}^{i-1}, X_{i-k} \sim \mu),
\end{equation*}
where $p_{(K,\Qbf,\gamma)}(\cdot | X_{i-k} \sim \mu)$ is the density of a HMM with parameters $(K, \Qbf, \gamma)$ starting at time $i-k$ with the distribution $\mu$. When $\mu$ is the stationary distribution of the Markov chain under the parameter $(K,\Qbf,\gamma)$, we write $L_{i,k}(K,\Qbf,\gamma)$.
The following remark will be useful in our proofs: since
\begin{align*}
p_{(K,\pi,\Qbf,\gamma)}(X_k = x | Y_1^{k-1}) &= \frac{ \displaystyle
	\sum_{x' \in [K]}  \!\!  p_{(K,\pi,\Qbf,\gamma)}(X_{k-1} = x' | Y_1^{k-2}) \Qbf(x',x) \gamma_{x'}(Y_{k-1})
}{ \displaystyle
	\sum_{x' \in [K]} p_{(K,\pi,\Qbf,\gamma)}(X_{k-1} = x' | Y_1^{k-2}) \gamma_{x'}(Y_{k-1})
} \\
&\in [(\Csigma \log n)^{-1} K^{-1}, \Csigma (\log n) K^{-1}]
\end{align*}
using \ref{Aergodic}, one has for all $k$, $\mu$ and $(K,\pi,\Qbf,\gamma) \in \Sbf_n$
\begin{align}
\label{eq_Li_encadre_par_d}
\left| L_{i,k,\mu}(K,\Qbf,\gamma) - b_\gamma(Y_i) \right| \leq \log (\Csigma \log n).
\end{align}

Assume from now on that $n \geq \exp(\Csigma)$. For all $k, k' \in \Nbb^*$, for all $\mu$, $\mu'$ probability distributions and for all ${(K,\pi,\Qbf,\gamma), (K',\pi',\Qbf',\gamma')} \in \Sbf_n$,
\begin{equation}
\label{eq_Li_borne_par_d}
\hspace{-1em}
\begin{cases}
| L_{i,k,\mu}(K,\Qbf,\gamma) - L_{i,k',\mu'}(K',\Qbf',\gamma') | \leq 4 \log \log n + | b_\gamma(Y_i) | + | b_{\gamma'}(Y_i) |,
\\[5pt]
| L_{i,k,\mu}(K,\Qbf,\gamma) - L_{i,k'}^* | \leq 2 \log \log n + | b_\gamma(Y_i) | + | L_{i,k'}^* |.
\end{cases}
\end{equation}

Let $k \geq 1$ and $D > 0$. Approximate $\nu(K, \pi, \Qbf, \gamma)$ by the deviation
\begin{equation*}
\bar{\nu}_k(t_{(K, \Qbf, \gamma)}^{(D)}) :=
	\frac{1}{n} \sum_{i=1}^n t_{(K, \Qbf, \gamma)}^{(D)}(Y_{i-k}^i) - \Ebb^* [t_{(K, \Qbf, \gamma)}^{(D)}(Y_{-k}^0)]
\end{equation*}
where
\begin{equation*}
t_{(K, \Qbf, \gamma)}^{(D)} : Y_{-k}^0 \longmapsto (L_{0,k}^* - L_{0,k,\mu_t}(K, \Qbf, \gamma)) \one_{|L_{0,k}^*| \vee \left(\sup_{\gamma' \in \Sbf_n^{(\gamma)}} |b_{\gamma'}(Y_0)| \right) \leq D}
\end{equation*}
for a fixed measure $\mu_t$, for instance the uniform measure on $[K]$. Note that $\| t_{(K, \Qbf, \gamma)}^{(D)} \|_\infty \leq 2(D + \log \log n)$ by equation~\eqref{eq_Li_borne_par_d}.

Considering these functions $t_{(K, \Qbf, \gamma)}^{(D)}$ has two advantages. The first one is to limit the time dependency on the past to only $k$ observations, which makes it possible to use the forgetting property of the process $(Y_t)_{t \in \Zbb}$. The second one is to consider bounded functionals of this process, for which Bernstein-like concentration inequalities apply. The error of this approximation is given by the following lemma.
\begin{lemma}
\label{lemma_ecart_barnuk_barnuktronque}
Assume \ref{Atail}, \ref{Aergodic}, \ref{Astar_tail} and \ref{Astar_forgetting} hold. Then there exists $n_0$ depending on $\Csigma$, $C_*, \rho_*, M_\delta$ and $\delta$ such that for all $n \geq n_0$, for all $u \geq 1$, with probability greater than $1 - 2 n e^{-u}$, for all $(K, \pi, \Qbf, \gamma) \in \Sbf_n$,
\begin{equation*}
\left| \nu(K, \pi, \Qbf, \gamma) - \bar{\nu}_k(t_{(K, \Qbf, \gamma)}^{(\CB (\log n) u)}) \right| \leq 10 \CB (\log n) u e^{-u}
	+ \frac{2}{n \rho (1 - \rho)^2} + \frac{4 \rho^{k-1}}{1 - \rho}
\end{equation*}
where $\rho = 1 - (\Csigma \log n)^{-2}$. In particular, if $k \geq \Csigma^2 (\log n)^3$ and $n \geq n_0 \vee \sqrt{30 \CB}$, for all $D \geq 3 \CB (\log n)^2$, with probability greater than $1 - 2 n^{-2}$,
\begin{equation}
\label{eq_ecart_nu_nuk_general}
\left| \nu(K, \pi, \Qbf, \gamma) - \bar{\nu}_k(t_{(K, \Qbf, \gamma)}^{(D)}) \right| \leq 13 \Csigma^4 \frac{(\log n)^4}{n}.
\end{equation}
\end{lemma}

\begin{proof}
Proof in Section~\ref{sec_preuve_lemma_troncature}.
\end{proof}

The following theorem is our main technical result. It shows that $\bar{\nu}_k(t_{(K, \Qbf, \gamma)}^{(D)})$ can be controlled uniformly on all models with high probability.
\begin{theorem}
\label{th_penalite_L2}
Assume \ref{Aergodic}, \ref{Aentropy} and \ref{Astar_mixing}. 
Also assume that $D \geq \log n$, that $k \geq n_* + 1$ and that there exists $n_1$ such that for all $n \geq n_1$, for all $K \leq n$ and $M$ such that $m_M \leq n$,
\begin{equation}
\label{eq_condition_fKM}
   14 \pi \, ( m_M K + K^2 - 1)   e^{-4D} (\log n)^2 (k + \log \Caux(M,K,D,n)) \leq n.
\end{equation}

Let $(w_M)_{M \in \Mcal}$ be a sequence of positive numbers such that $\sum_M e^{-w_M} \leq e-1$. Then there exist constants $C_\pen$ and $A$ depending on $n_*$ and $c_*$ and a numerical constant $n_0$ such that for all $\epsilon > 0$ and $n \geq n_1 \vee n_0$, the following holds.

Let $\pen_n$ be a function such that for all $K \leq n$ and $M$ such that $m_M \leq n$,
\begin{multline}
\label{eq_condition_penalite_generale}
\pen_n(K,M) \geq  \frac{C_\pen}{n} k^2 \left( \frac{1}{\epsilon} \vee \frac{D (\log n)^2}{k} \right) \times \\
		\Big( w_M + ( m_M K + K^2 - 1) D (\log n)^2 (D + k \log \log n + \log \Caux) \Big).
\end{multline}
Then for all $s > 0$, with probability larger than $1 - e^{-s}$, for all $K \leq n$ and $M$ such that $m_M \leq n$ and for all $(K, \pi, \Qbf, \gamma) \in S_{K,M,n}$,
\begin{multline}
\label{eq_control_nukmoinspen_generale}
| \bar{\nu}_k(t_{(K, \Qbf, \gamma)}^{(D)}) | - \pen_n(K,M) \leq \epsilon \Ebb[t_{(K, \Qbf, \gamma)}^{(D)}(Y_{-k}^0)^2] \\
	+ A k^2 \left(\frac{1}{\epsilon} \vee \frac{D (\log n)^2}{k} \right) \frac{s}{n}.
\end{multline}
\end{theorem}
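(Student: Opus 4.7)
The plan is to establish this uniform concentration by combining three ingredients: a Bernstein-type inequality for the geometrically $\rho$-mixing process under \textbf{[A$\star$mixing]}, a bracketing-entropy control on the class of truncated centered functionals coming from \textbf{[Aentropy]} and \textbf{[Aergodic]}, and a peeling (slicing) argument over the variance level $\Ebb^*[T(Y_{-k}^0)^2]$. Throughout, fix $K \leq n \wedge \tfrac{1}{2\sigma_-(n)}$ and $M$ with $m_M \leq n$, and write $T = t_{(K,\Qbf,\gamma)}^{(D)}$ for the generic element of the truncated class $\Tcal_{K,M,n}^{(D)} := \{t_{(K,\Qbf,\gamma)}^{(D)} : (K,\pi,\Qbf,\gamma)\in S_{K,M,n}\}$. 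By~\eqref{eq_Li_encadre_par_d} and the truncation indicator, each $T$ is a bounded $(k{+}1)$-block functional with $\|T\|_\infty \leq 2D + \log(1/\sigma_-(n))$.

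First I would lift the bracketing-entropy control of \textbf{[Aentropy]} (which is stated for emission densities) to the class $\Tcal_{K,M,n}^{(D)}$. Using the explicit expansion of $L_{0,k,\mu}(K,\Qbf,\gamma)$ in terms of $\Qbf$ and the $\gamma_x$'s, and the fact that the forward recursion is Lipschitz in both with constants controlled by $1/\sigma_-(n)$ through $k$ conditioning steps, any $\delta$-bracket net on the emission densities combined with a grid of spacing $\delta$ on the $K^2-1$ free entries of $\Qbf$ yields a bracket net on $\Tcal_{K,M,n}^{(D)}$ of log-cardinality at most
\[
c\,(m_M K + K^2 - 1)\bigl(\log(\Caux/\delta) + k \log(2/\sigma_-(n)) + D\bigr),
\]
for the supremum distance on $Y_{-k}^0$. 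This is where the $k\log(2/\sigma_-(n))$ and the $\log \Caux$ terms of the penalty originate.

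Next, for a single $T$ in the net, I would apply a Bernstein inequality for stationary exponentially $\rho$-mixing sequences (as in Merlevède--Peligrad--Rio) to $T(Y_{i-k}^i)$, treated as a $(k{+}1)$-block functional with mixing coefficients of the underlying process. This yields, for any $u > 0$ and with $\sigma_T^2 = \Ebb^*[T(Y_{-k}^0)^2]$,
\[
\Pbb^*\!\bigl(|\bar{\nu}_k(T)| \geq u\bigr) \leq 2\exp\!\left(-\frac{c\,n\,u^2}{(n_*+k+1)^2\bigl(\sigma_T^2 + \|T\|_\infty\,u\,(\log n)^2\bigr)}\right),
\]
the prefactor $(n_*+k+1)^2$ and the extra $(\log n)^2$ arising respectively from absorbing the block length and from the standard Bernstein-for-mixing argument based on extracting blocks separated by gaps of order $\log n$. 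Taking a finite bracket net at scale $\delta \sim 1/n$ and paying a union-bound cost equal to the log-cardinality computed above produces exactly the first term of~\eqref{eq_condition_penalite_generale}; the discretization error is absorbed by the residual, which requires precisely the compatibility condition~\eqref{eq_condition_fKM}. To obtain the linear variance term $\epsilon \Ebb^*[T^2]$ on the right-hand side, I would slice $\Tcal_{K,M,n}^{(D)}$ dyadically in $\sigma_T^2$ and, on each slice, choose $u$ in Bernstein so that the dominant quadratic term equals $\epsilon \cdot 2^{j-1}\sigma_0^2$; only $O(\log n)$ slices are needed because $\sigma_T^2 \leq \|T\|_\infty^2$. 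A final union bound over $K \leq n$ and $M$ with weights $e^{-w_M}$, using $\sum_M e^{-w_M} \leq e-1$ and shifting $s \mapsto s + w_M + 2\log n$, produces the $w_M$-dependent part of the penalty.

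The main obstacle lies in simultaneously controlling three sources of loss so that the final bound has the stated sharp form. The block length $k$ must appear only through the clean $(n_*+k+1)^2$ prefactor in the residual and additively (through $k\log(2/\sigma_-(n))$) in the entropy term, rather than multiplying everything; this forces a careful use of the Bernstein inequality in its \emph{variance-sensitive} form and a bracketing cover built directly on $\Tcal_{K,M,n}^{(D)}$ rather than on larger function classes. The peeling step must be performed \emph{before} the union bound over $(K,M)$ so that the $\epsilon \Ebb^*[T^2]$ term is retained and the extra log factors coming from the slicing are already accounted for by the penalty structure. Once these three pieces are aligned, the stated inequality~\eqref{eq_control_nukmoinspen_generale} follows by assembling the slice-wise Bernstein estimates and choosing the constants $C_\pen$ and $A$ to dominate the universal constants arising in the mixing inequality and in the Lipschitz transfer of the entropy.
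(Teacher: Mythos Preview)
Your three ingredients are the right ones, and the penalty shape you predict (the $(m_M K + K^2-1)$ dimension, the $k\log(2/\sigma_-(n))$ from propagating through $k$ conditioning steps, the $(n_*+k+1)^2$ from the block structure, the $(\log n)^2$ from the mixing Bernstein, the $w_M$ from the union over models) matches the paper exactly. The route, however, differs in two substantive ways.

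First, the paper does \emph{not} apply Bernstein at the points of a single net and union-bound. It passes instead through Massart's chaining machinery: the mixing Bernstein is packaged into a Laplace-transform bound $\phi(\lambda)$, from which Theorem~6.8 of Massart yields a conditional-expectation bound on $\sup_{t\in\Tcal}|\bar\nu_k(t)|$ involving the entropy integral $E = \sqrt{n}\int_0^\sigma \sqrt{H(u)\wedge n}\,du + b(\log n)^2 H(\sigma)$. The variance-adaptive step is then carried out via the weighted supremum
\[
W_{K,M} = \sup_{\theta\in S_{K,M}} \frac{|\bar\nu_k(t_\theta^{(D)})|}{\Ebb^*[t_\theta^{(D)}(Z_0)^2] + x_{K,M}^2}
\]
using Massart's Lemmas~4.23 and~2.4; the scales $x_{K,M}$ are chosen so that $W_{K,M}\leq\epsilon$ and $W_{K,M}x_{K,M}^2\leq \pen_n(K,M)$ simultaneously. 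Your direct slicing over dyadic variance shells would also work here (for parametric entropy $H(u)\asymp d\log(C/u)$ the entropy integral and a single-scale net differ only by constants), but it is a genuinely different argument and you would need to be careful that the net-point variance $\sigma_{T_j}^2$ is comparable to $\sigma_T^2$ on each slice.

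Second, your one-line ``the forward recursion is Lipschitz in $\Qbf$ and $\gamma$ with constants $1/\sigma_-(n)$ through $k$ steps'' hides the most technical part of the proof. The paper does not control the bracketing entropy of $\Tcal$ in sup norm directly; it first passes to the family $\Gcal = \{\gbf_\theta\}$ of vectors $g_{\theta,x}(y_0^k) = p_\theta(X_k=x, Y_k=y_k\mid Y_0^{k-1})$ under an $L^1$-type distance $d_\Gcal$, shows that a $d_\Gcal$-bracket of $\Gcal$ induces a $d_k$-bracket of $\Tcal$ (with the $(D+\log(1/\sigma_-))e^{2D}/\sigma_-$ loss), and then builds brackets of $\Gcal$ from brackets of $\{\pi_\theta\}$, $\{\Qbf_\theta\}$, $\{\gamma_\theta\}$ via explicit lower/upper envelopes of the smoothing matrices $F_{i|k}^\theta$. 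It is this last construction that produces the $(2/\sigma_-)^{k}$ blow-up responsible for the $k\log(2/\sigma_-)$ term, and it requires the condition $K\leq 1/(2\sigma_-(n))$ to keep the intermediate brackets nonempty. Your Lipschitz heuristic is the right intuition but would not by itself give valid \emph{brackets}; you need monotone upper and lower envelopes that still sum correctly.
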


\begin{proof}
Proof in Section~\ref{sec_proof_control_barnuk}.
\end{proof}

The last step is to control the variance term $\Ebb[t_{(K, \Qbf, \gamma)}^{(D)}(Y_{-k}^0)^2]$ by $\Kbf(K, \Qbf, \gamma)$.
\begin{lemma}
\label{lemma_control_esperance_quadratique_par_Kbf}
Assume \ref{Atail}, \ref{Aergodic}, \ref{Astar_tail} and \ref{Astar_forgetting} hold.
There exists a constant $n_0$ depending on $M_\delta$, $\delta$, $\rho_*$, $C_*$ and $\Csigma$ such that for all $n \geq n_0$, $k \geq \Csigma (\log n)^3$, $D > 0$ and $(K, \pi, \Qbf, \gamma) \in \Sbf_n$,
\begin{equation*}
\frac{1}{44 \CB^2 (\log n)^4} \Ebb^* [ t_{(K, \Qbf, \gamma)}^{(D)}(Y_{i-k}^i)^2 ]
	\leq \Kbf(K, \Qbf, \gamma)
		+ \frac{22}{n}.
\end{equation*}
\end{lemma}

\begin{proof}
Proof in Section~\ref{sec_proof_controle_dvar}.
\end{proof}

Take $u = 3 \log n$ in order to have $n e^{-u} \leq n^{-2}$ in Lemma~\ref{lemma_ecart_barnuk_barnuktronque}. Note that $u \geq 1$ for all $n \geq e$. Based on Lemma~\ref{lemma_ecart_barnuk_barnuktronque} and~\ref{lemma_control_esperance_quadratique_par_Kbf}, also take
\begin{equation*}
\begin{cases}
D = \CB (\log n) u = 3 \CB (\log n)^{2}, \\
k = \Csigma^2 (\log n)^3.
\end{cases}
\end{equation*}

In the following, we assume $n \geq e \vee \exp([(n_*+1) / \Csigma^2]^{1/3})$, so that $k \geq n_* + 1$ and $D \geq \log n$.
Let $\eta \leq 1$. In order to get $\epsilon \Ebb^*[t_{(K, \Qbf, \gamma)}^{(D)}(Y_{i-k}^i)^2] \leq \eta \Kbf(K, \Qbf, \gamma) + \frac{22 \eta}{n}$ using Lemma~\ref{lemma_control_esperance_quadratique_par_Kbf}, take
\begin{equation*}
\frac{1}{\epsilon} = \frac{1}{\eta} 44 \CB^2 (\log n)^4.
\end{equation*}

When assumption \ref{Agrowth} holds and $m_M \leq n$ and $K \leq n$, equation~\eqref{eq_condition_fKM} is implied by
\begin{equation*}
28 \pi n^2 (\log n)^2 e^{-12 \CB (\log n)^2} (\Csigma^2 (\log n)^3 \log \log n + n^{\zeta}) \leq n
\end{equation*}
for all $n \geq n_\text{growth}$, which is true for $n \geq n_1$ for a constant $n_1$ depending only on $n_\text{growth}$, $\Csigma$ and $\zeta$.

Moreover, there exists a constant $C_\epsilon$ depending only on $\Csigma$ and $\CB$ such that for all $n$,
\begin{equation*}
\frac{1}{\epsilon} \vee \frac{D (\log n)^2}{k}
	\leq \frac{C_\epsilon}{\eta} (\log n)^4.
\end{equation*}

Thus, there exists an integer $n_0''$ depending on $\Csigma$ and $\CB$ (for instance $\exp(3\CB / \Csigma^2)$) such that for all $n \geq n_0''$ 
equation~\eqref{eq_condition_penalite_generale} is implied by
\begin{multline*}
\pen_n(K,M) \geq \frac{C_\pen}{n} \Csigma^4 (\log n)^6 \frac{C_\epsilon}{\eta} (\log n)^{4} \\
	\times \Bigg[ w_M + 6 \CB (\log n)^{4} ( m_M K + K^2 - 1) \\
		\times \left( \Csigma^2 (\log n)^3 \log \log n
		+ \log \Caux(M,K,3\CB(\log n)^2,n) \right)\Bigg],
\end{multline*}
so if in addition $n$ is larger than the thresholds of Theorem~\ref{th_penalite_L2} and Lemma~\ref{lemma_control_esperance_quadratique_par_Kbf}, equation~\eqref{eq_control_nukmoinspen_generale} and Lemma~\ref{lemma_control_esperance_quadratique_par_Kbf} imply for all $s > 0$, with probability at least $1-e^{-s}$, for all $K \leq n$ and $M$ such that $m_M \leq n$ and all $(K,\pi,\Qbf,\gamma) \in S_{K,M,n}$,
\begin{align}\nonumber
\label{eq_control_nubar_final}
| \bar{\nu}_k(t_{(K, \Qbf, \gamma)}^{(D)}) | -& \pen_n(K,M) \\ \nonumber
	&\leq \eta \Kbf(K, \Qbf, \gamma)
		+ \frac{22 \eta}{n} + A \Csigma^4 (\log n)^6 \frac{C_\epsilon}{\eta} (\log n)^{4} \frac{s}{n} \\
	&\leq \eta \Kbf(K, \Qbf, \gamma)
	+ 23 A \Csigma^4 (\log n)^6 \frac{C_\epsilon}{\eta} (\log n)^{4} \frac{s}{n}
\end{align}
since we may assume $A \geq 1$ without loss of generality. Therefore, putting together equations~\eqref{eq_ecart_nu_nuk_general} and~\eqref{eq_control_nubar_final} shows
\begin{align*}
| \nu(K,\pi,\Qbf,\gamma) | -& \pen_n(K,M) \\
	&\leq \eta \Kbf(K,\Qbf,\gamma) + \frac{23 A \Csigma^4 C_\epsilon}{\eta} s \frac{(\log n)^{10}}{n} + 13 \Csigma^4 \frac{(\log n)^4}{n} \\
	&\leq \eta \Kbf(K,\Qbf,\gamma) + \frac{36 A \Csigma^4 C_\epsilon}{\eta} s \frac{(\log n)^{10}}{n}
\end{align*}
which is equation~\eqref{eq_goal_concentration_nu} with the appropriate residual terms for Theorem~\ref{th_oracle_simplifie}.

\subsection{Proofs}
\label{sec_preuves_techniques_oracle}

Let us first state two lemmas that will be of use in subsequent proofs.
\begin{lemma}
\label{lem_tail_moments}
Assume \ref{Atail} and \ref{Astar_tail}. Then there exists a constant $n_0$ depending on $\delta$ and $M_\delta$ such that for all $n \geq n_0$, for all $i,k$, and for all $u \geq 1$,
\begin{equation*}
\Pbb^*[ |L_{i,k}^*| \geq \CB (\log n) u ] \leq e^{-u}
\end{equation*}
where $L_{i,k}^* = \log p^* (Y_i | Y_{i-k}^{i-1})$ as defined in~\eqref{eq_def_Lik_star}, and writing $D = \CB (\log n) u$,
\begin{gather*}
\Ebb\left[ \sup_{\gamma \in \Sbf_n^{(\gamma)}} \left| b_\gamma(Y_1) \right| \one_{\sup_{\gamma \in \Sbf_n^{(\gamma)}} \left| b_\gamma(Y_1) \right| \geq D} \right]
	\vee \Ebb\left[ |L_{i,k}^*| \one_{|L_{i,k}^*| \geq D} \right]
	\leq 2 D e^{-u}, \\
\Ebb\left[ \sup_{\gamma \in \Sbf_n^{(\gamma)}} \left| b_\gamma(Y_1) \right|^2 \one_{\sup_{\gamma \in \Sbf_n^{(\gamma)}} \left| b_\gamma(Y_1) \right| \geq D} \right]
	\vee \Ebb\left[ |L_{i,k}^*|^2 \one_{|L_{i,k}^*| \geq D} \right]
	\leq 5 D^2 e^{-u}.
\end{gather*}
\end{lemma}

\begin{proof}
Let $i \in \Zbb$, $k \in \Nbb$ and $v > 0$. By \ref{Astar_tail} and Markov's inequality,
\begin{align*}
\Pbb^* \left[ L_{i,k}^* \geq v \right]
	&= \Pbb^* \left[ p^*(Y_i | Y_{i-k}^{i-1}) \geq e^v \right] \\
	&\leq e^{- \delta v} \Ebb^* \left[ (p^*(Y_i | Y_{i-k}^{i-1}))^\delta \right] \\
	&\leq e^{\log M_\delta - \delta v}.
\end{align*}

On the other hand,
\begin{align*}
\Pbb^* \left[ L_{i,k}^* \leq -v \right]
	&= \Pbb^* \left[ p^*(Y_i | Y_{i-k}^{i-1}) \leq e^{-v} \right] \\
	&= \Ebb^* \left[ \int \one_{p^*(y | Y_{i-k}^{i-1}) \leq e^{-v}} p^*(y | Y_{i-k}^{i-1}) \lambda(dy) \right] \\
	&\leq e^{-v}.
\end{align*}

Thus, there exists $B^* \geq 1$ such that if $u \geq 1$, $\Pbb^*[ |L_{i,k}^*| \geq B^* u ] \leq e^{-u}$. Therefore, for all $n \geq \exp(B^*)$, the first equation holds, and under \ref{Atail}, the variables $|L_{i,k}^*|$ and $\sup_{\gamma \in \Sbf_n^{(\gamma)}} \left| b_\gamma(Y_1) \right|$ are dominated by $\CB (\log n) (W \vee 1)$ where $W$ is an exponential random variable with parameter $1$. To conclude, note that for all $u > 0$,
\begin{gather*}
\Ebb^*[ W \one_{W \geq u} ] \leq (1 + u) e^{-u} \leq 2ue^{-u}, \\
\Ebb^*[ W^2 \one_{W \geq u} ] \leq (u^2 + 2u + 2) e^{-u} \leq 5u^2e^{-u}.
\end{gather*}
\end{proof}

\begin{lemma}
\label{lemma_ecart_Likx}
Assume \ref{Aergodic} and \ref{Astar_forgetting}.
\begin{enumerate}
\item Let $\rho = 1 - (\Csigma \log n)^{-2}$. Then for all $i$, $k$, $k'$, $\mu$ and $\mu'$,
\begin{equation*}
\sup_{(K,\pi,\Qbf,\gamma) \in \Sbf_n}
	| L_{i,k,\mu}(K,\Qbf,\gamma) - L_{i,k',\mu'}(K,\Qbf,\gamma) |
		\leq \rho^{k \wedge k' - 1} / (1 - \rho)
\end{equation*}
and there exists a process $(L_{i, \infty})_{i \in \Zbb}$ such that for all $i$, $k$ and $\mu$,
\begin{equation*}
\sup_{(K,\pi,\Qbf,\gamma) \in \Sbf_n} | L_{i,k,\mu}(K,\Qbf,\gamma) - L_{i,\infty}(K,\Qbf,\gamma) | \leq \rho^{k-1} / (1 - \rho).
\end{equation*}

\item For all $i$, $k$ and $k'$,
$| L_{i,k}^* - L_{i,k'}^* | \leq C_* \rho_*^{k \wedge k' - 1}$ and there exists a process $(L_{i, \infty}^*)_{i \in \Zbb}$ such that for all $i$ and $k$,
\begin{equation*}
| L_{i,k}^* - L_{i,\infty}^* | \leq C_* \rho_*^{k-1}.
\end{equation*}

\item \label{point3} Under $\Pbb^*$, the processes $(L_{i,\infty}^*)_{i \in \Zbb}$ and $(L_{i,\infty}(K,\Qbf,\gamma))_{i \in \Zbb}$ are stationary for all ${(K,\pi,\Qbf,\gamma) \in \Sbf_n}$. Assume \ref{Astar_mixing}, \ref{Atail} and \ref{Astar_tail}, then they are also ergodic, integrable and
\begin{equation*}
l(K,\Qbf,\gamma) = \Ebb^* [ L_{1,\infty}(K,\Qbf,\gamma) ] \quad \text{and} \quad
l^* = \Ebb^* [ L_{1,\infty}^* ].
\end{equation*}
\end{enumerate}
\end{lemma}

\begin{proof}
The first point is a result from \cite{douc2004asymptotic}.

The second point follows directly from \ref{Astar_forgetting}.

The third point follows from the ergodicity of $(Y_t)_{t \geq 1}$ under \ref{Astar_mixing}, from the integrability of $L_{i,\infty}$ and $L_{i,\infty}^*$ under \ref{Atail} and \ref{Astar_tail} by Lemma~\ref{lem_tail_moments} and from Lemmas \ref{lemma_Barron} and \ref{lemma_Leroux} for the definition of $l$ and $l^*$.
\end{proof}

\subsubsection{Proof of Lemma~\ref{lemma_ecart_barnuk_barnuktronque}}
\label{sec_preuve_lemma_troncature}

Let ${t_{(K,\Qbf,\gamma)} : Y_{-k}^0 \longmapsto L_{0,k}^* - L_{0,k,\mu_t}(K,\Qbf,\gamma)}$. Then
\begin{align*}
\nu(K,\pi,\Qbf,\gamma) &- \bar{\nu}_k(t_{(K,\Qbf,\gamma)}) \\
	=& \; \frac{1}{n} \sum_{i=1}^n (L_{i,i-1}^* - L_{i,k}^*)
		- \frac{1}{n} \sum_{i=1}^n (L_{i,i-1,\pi}(K,\Qbf,\gamma) - L_{i,k,\mu_t}(K,\Qbf,\gamma)) \\
		&- \Ebb[L_{0,\infty}^* - L_{0,k}^*] + \Ebb[L_{0,\infty}(K,\Qbf,\gamma) - L_{0,k,\mu_t}(K,\Qbf,\gamma)].
\end{align*}

Thus, by Lemma~\ref{lemma_ecart_Likx},
\begin{align*}
| \nu(K,\pi,\Qbf,\gamma) &- \bar{\nu}_k(t_{(K,\Qbf,\gamma)}) | \\
	&\leq \frac{1}{n} \sum_{i=1}^n \frac{\rho^{(i-1) \wedge k - 1}}{1 - \rho}
		+ C_* \frac{1}{n} \sum_{i=1}^n \rho_*^{(i-1) \wedge k - 1}
		+ \frac{\rho^{k - 1}}{1 - \rho}
		+ C_* \rho_*^{k - 1} \\
	&\leq \frac{1}{n \rho (1 - \rho)^2} + \frac{2 \rho^{k-1}}{1 - \rho}
		+ C_* \left( \frac{1}{n \rho_* (1 - \rho_*)} + 2 \rho_*^{k-1} \right) \\
	&\leq \frac{2}{n \rho (1 - \rho)^2} + \frac{4 \rho^{k-1}}{1 - \rho}
\end{align*}
as soon as $\rho_* \leq \rho$ and $C_* \leq 1/(1 - \rho)$, which holds when $\Csigma \log n \geq (C_* \vee (1-\rho_*)^{-1})^{1/2}$, in particular when $\log n \geq (C_* \vee (1-\rho_*)^{-1})^{1/2}$.

Let $u \geq 1$ and $D = \CB (\log n) u$ and assume that $n \geq n_0$ from Lemma~\ref{lem_tail_moments}. Then
\begin{align*}
\bar{\nu}_k(t_{(K,\Qbf,\gamma)}) - \bar{\nu}_k(t_{(K,\Qbf,\gamma)}^{(D)})
	 ={} & \frac{1}{n} \sum_{i=1}^n t_{(K,\Qbf,\gamma)}(Y_{i-k}^i) \one_{|L_{i,k}^*| \vee (\sup_{\gamma' \in \Sbf_n^{(\gamma)}} | b_{\gamma'}(Y_i) |) > D} \\
	 &- \Ebb^* [t_{(K,\Qbf,\gamma)}(Y_{-k}^0) \one_{|L_{0,k}^*| \vee (\sup_{\gamma' \in \Sbf_n^{(\gamma)}} | b_{\gamma'}(Y_0) |) > D}].
\end{align*}

We restrict ourselves to the event $\bigcap_{i=1}^n \{ |L_{i,k}^*| \vee (\sup_{\gamma' \in \Sbf_n^{(\gamma)}} | b_{\gamma'}(Y_i) |) \leq D \}$, which occurs with probability greater than $1 - 2 n e^{-u}$ using assumption \ref{Atail} and Lemma~\ref{lem_tail_moments}. On this event,
\begin{equation*}
\frac{1}{n} \sum_{i=1}^n t_{(K,\Qbf,\gamma)}(Y_{i-k}^i) \one_{|L_{i,k}^*| \vee (\sup_{\gamma' \in \Sbf_n^{(\gamma)}} \left| b_{\gamma'}(Y_i) \right|) > D} = 0.
\end{equation*}

Moreover,
\begin{multline*}
| \Ebb^*[t_{(K,\Qbf,\gamma)}(Y_{-k}^0) - t_{(K,\Qbf,\gamma)}^{(D)}(Y_{-k}^0)] | \\
	= \Ebb^*[|t_{(K,\Qbf,\gamma)}(Y_{-k}^0)| \one_{|L_{0,k}^*| \vee (\sup_{\gamma' \in \Sbf_n^{(\gamma)}} |b_{\gamma'}(Y_0)|) > D}].
\end{multline*}

Equation~\eqref{eq_Li_borne_par_d} ensures that $|t_{(K,\Qbf,\gamma)}(Y_{-k}^0)| \leq |L_{0,k}^*| + \sup_{\gamma' \in \Sbf_n^{(\gamma)}} |b_{\gamma'}(Y_0)| + 2 \log \log n$ when $n \geq \exp(\Csigma)$, so that
\begin{align*}
| \Ebb^* & [t_{(K,\Qbf,\gamma)}(Y_{-k}^0) -t_{(K,\Qbf,\gamma)}^{(D)}(Y_{-k}^0)] |  \\
	\leq{} & \; \Ebb^*\left[|L_{0,k}^*| \left(\one_{|L_{0,k}^*| > D} + \one_{|L_{0,k}^*| \leq D < \underset{\gamma' \in \Sbf_n^{(\gamma)}}{\sup} |b_{\gamma'}(Y_0)|}\right)\right] \\
		&+ \Ebb^*\left[\sup_{\gamma' \in \Sbf_n^{(\gamma)}} |b_{\gamma'}(Y_0)| \left( \one_{\underset{\gamma' \in \Sbf_n^{(\gamma)}}{\sup} |b_{\gamma'}(Y_0)| > D}
				+ \one_{\underset{\gamma' \in \Sbf_n^{(\gamma)}}{\sup} |b_{\gamma'}(Y_0)| \leq D < |L_{0,k}^*|} \right)\right] \\
		&+ 2 (\log \log n) \Ebb^*\left[\left( \one_{|L_{0,k}^*| > D} + \one_{\underset{\gamma' \in \Sbf_n^{(\gamma)}}{\sup} |b_{\gamma'}(Y_0)| > D} \right) \right].
\end{align*}

Thus, by Lemma~\ref{lem_tail_moments},
\begin{align*}
| \Ebb^* [t_{(K,\Qbf,\gamma)} & (Y_{-k}^0) - t_{(K,\Qbf,\gamma)}^{(D)}(Y_{-k}^0)] | \\
	\leq{} & 2De^{-u} + D \Pbb^*\left(\sup_{\gamma' \in \Sbf_n^{(\gamma)}} |b_{\gamma'}(Y_0)| > D\right)
		+ 2De^{-u} + D \Pbb^*(|L_{0,k}^*| > D) \\
		&+ 2 (\log \log n) \left( \Pbb^*\left(\sup_{\gamma' \in \Sbf_n^{(\gamma)}} |b_{\gamma'}(Y_0)| > D\right) + \Pbb^*(|L_{0,k}^*| > D) \right) \\
	\leq{} & 6D e^{-u} + 4 (\log \log n) e^{-u}.
\end{align*}

Finally, using $\log \log n \leq \log n \leq \CB \log n u$ when $u \geq 1$ concludes the proof of the first equation.

For the second equation, take $u = 3 \log n$. Since $\rho^{k} \leq n^{-1}$ when $k \geq \Csigma^2 (\log n)^3$ and $u \longmapsto u e^{-u}$ is nonincreasing on $[1, +\infty)$, for all $D \geq 3 \CB (\log n)^2$,
\begin{align*}
\Big| \nu(K, \pi, \Qbf, \gamma) &- \bar{\nu}_k(t_{(K, \Qbf, \gamma)}^{(D)}) \Big| \\
	&\leq 10 \CB (\log n) (3 \log n) e^{- 3 \log n}
		+ \frac{2 (\Csigma \log n)^4}{n \rho} + \frac{4 (\Csigma \log n)^2}{n \rho} \\
	&\leq 30 \CB \frac{(\log n)^2}{n^3}
		+ \frac{12 (\Csigma \log n)^4}{n}
	\leq \frac{13 (\Csigma \log n)^4}{n}
\end{align*}
for $n \geq \sqrt{30 \CB}$ (using $\rho \leq 1/2$ for the second line).

\subsubsection{Proof of Lemma~\ref{lemma_control_esperance_quadratique_par_Kbf}}
\label{sec_proof_controle_dvar}

\begin{lemma}
\label{lemma_dvar}
Assume \ref{Atail}, \ref{Aergodic} and \ref{Astar_tail} hold. Let
\begin{equation*}
\Vbf(K,\Qbf,\gamma) := \Ebb^* \left[
	(L_{0,\infty}^* - L_{0,\infty}(K,\Qbf,\gamma))^2
\right].
\end{equation*}
Then for all $n \geq e^4 \vee \exp(\Csigma)$,
\begin{equation*}
\frac{1}{44 \CB^2 (\log n)^4} \Vbf(K,\Qbf,\gamma)
	\leq \Kbf(K,\Qbf,\gamma) + \frac{11}{n}.
\end{equation*}
\end{lemma}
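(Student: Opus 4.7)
The plan is to let $U := L_{0,\infty}^* - L_{0,\infty}(K,\Qbf,\gamma)$, so that $\Vbf(K,\Qbf,\gamma) = \Ebb^*[U^2]$ and, via the conditional-Kullback--Leibler identity recalled just after Lemma~\ref{lemma_ecart_Likx}, $\Kbf(K,\Qbf,\gamma) = \Ebb^*[U]$. Decompose $\Ebb^*[U^2] = \Ebb^*[U^2\one_A] + \Ebb^*[U^2\one_{A^c}]$ at the truncation event
\begin{equation*}
A := \{|L_{0,\infty}^*| \leq B^* v^q\} \cap \Bigl\{\sup_{\gamma' \in \Sbf_n^{(\gamma)}} |b_{\gamma'}(Y_0)| \leq B(n) v^q\Bigr\},
\end{equation*}
whose complement has probability at most $2 e^{-v}$ by \textbf{[A$\star$tail]} (extended to the infinite-past limit $L_{0,\infty}^*$ via \textbf{[A$\star$forgetting]}) and \textbf{[Atail]}. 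The extension of (\ref{eq_Li_borne_par_d}) to infinite conditioning then gives $|U| \leq M := 2B(n)v^q + \log\tfrac{1}{\sigma_-(n)}$ on $A$, assuming WLOG $B(n) \geq B^*$.

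On $A$, the key identity is that conditionally on $Y_{-\infty}^{-1}$ the variable $Y_0$ has density $\exp(L_{0,\infty}^*)$ with respect to $\lambda$, so
\begin{equation*}
\Ebb^*[e^{-U} \mid Y_{-\infty}^{-1}] = \int \exp(L_{0,\infty}(K,\Qbf,\gamma))(y)\, d\lambda(y) = 1,
\end{equation*}
yielding $\Ebb^*[e^{-U}+U-1] = \Kbf$ after taking expectations. Combining this with the pointwise bound $x^2 \leq 3 M^2 (e^{-x}+x-1)$ valid for $|x| \leq M$ and $M \geq 2$ (and $M \geq 2$ is guaranteed by $\sigma_-(n) \leq e^{-2}$), and using $e^{-x}+x-1 \geq 0$ to drop the indicator on the right-hand side, gives $\Ebb^*[U^2\one_A] \leq 3 M^2 \Kbf$.

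For the tail, the deterministic bound $|U| \leq |L_{0,\infty}^*| + |b_\gamma(Y_0)| + \log\tfrac{1}{\sigma_-(n)}$ (itself following from (\ref{eq_Li_borne_par_d}) in the limit) implies $U^2 \leq 3[(L_{0,\infty}^*)^2 + b_\gamma(Y_0)^2 + \log^2\tfrac{1}{\sigma_-(n)}]$. Writing $\one_{A^c} \leq \one_{A_1^c} + \one_{A_2^c}$ with $A_1, A_2$ the two constituents of $A$, the moment estimates of Lemma~\ref{lemma_moments_W} via the coupling with the variable $W$ of Section~\ref{sec_moments_W} handle the direct pieces. Cross terms such as $\Ebb^*[b_\gamma(Y_0)^2 \one_{A_1^c}]$ are handled by further splitting $A_1^c = (A_1^c \cap A_2) \cup (A_1^c \cap A_2^c)$: on the first piece one controls $b_\gamma(Y_0)^2$ deterministically by $(B(n)v^q)^2$ and multiplies by $\Pbb^*(A_1^c) \leq e^{-v}$, and on the second piece one uses the direct tail of $b_\gamma(Y_0)$. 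All contributions carry $e^{-v}$ decay with polynomial factors in $v^q$ absorbable into $M^2$, producing $\Ebb^*[U^2\one_{A^c}] \leq 64 M^2 e^{-v}$; dividing the total by $3 M^2$ concludes.

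The main technical obstacle is the variance-KL comparison constant: $\sup_{|x|\leq M} x^2/(e^{-x}+x-1)$ is attained at $x=M$ and is only of order $M$, but the looser upper bound $3 M^2$ (which matches the $M^2$ denominator in the statement) holds whenever $e^{-M}+M-1 \geq 1/3$, i.e. $M \geq 2$, which the hypothesis $\sigma_-(n) \leq e^{-2}$ ensures. A secondary but essential bookkeeping point is to extract an $e^{-v}$ — rather than $e^{-v/2}$ — decay for the cross-tail terms: the splitting $A_1^c = (A_1^c\cap A_2)\cup(A_1^c\cap A_2^c)$ is precisely what allows this, since a naive Cauchy-Schwarz against fourth moments would cost a factor $e^{-v/2}$ that the statement does not allow.
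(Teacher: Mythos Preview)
Your proof is correct and takes a genuinely different route from the paper. The paper invokes an external result (Lemma~4 of \cite{STG13bayesiandensity}) that compares $\Ebb_P[(\log p/q)^2]$ to the squared Hellinger distance $H(P,Q)^2$ plus a tail term, and then upper bounds $H^2$ by the Kullback--Leibler divergence. The factor $3$ in the denominator arises there from $12 + 2(\log 1/\lambda)^2 \leq 3(\log 1/\lambda)^2$, which is why the paper effectively needs $M \geq 4$ (via the condition $\lambda \leq e^{-4}$). Your argument replaces this two-step detour by the single elementary inequality $x^2 \leq 3M^2(e^{-x}+x-1)$ for $|x| \leq M$, combined with the identity $\Ebb^*[e^{-U}+U-1] = \Kbf$; this is more self-contained, needs only $M \geq 2$, and makes the origin of the constant $3$ transparent as the reciprocal of $e^{-M}+M-1$ at the threshold. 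The tail analysis is essentially the same in both proofs: the paper also splits according to which of $|L_{0,\infty}^*|$ or $\sup_{\gamma'}|b_{\gamma'}(Y_0)|$ exceeds its threshold and uses Lemma~\ref{lemma_moments_W}, arriving at the same $64\,e^{-v}M^2$ bound.

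One small point worth making explicit: your identity $\Ebb^*[e^{-U}\mid Y_{-\infty}^{-1}]=1$ requires that $\exp(L_{0,\infty}(K,\Qbf,\gamma))$ be a genuine probability density in $Y_0$. This does hold, but it is not quite automatic from the uniform convergence of $L_{0,k}$ stated in Lemma~\ref{lemma_ecart_Likx}; one passes to the limit under the integral via dominated convergence, using that $\exp(L_{0,k}) \leq e^{b_\gamma(Y_0)} = \sum_x \gamma_x(Y_0)$ is $\lambda$-integrable. The paper relies on the same fact implicitly when it writes $\Kbf(K,\Qbf,\gamma)=\Ebb^*_{Y_{-\infty}^{-1}}[KL(\Pbb^*_{Y_0|Y_{-\infty}^{-1}}\,\|\,\Pbb_{Y_0|Y_{-\infty}^{-1},(K,\Qbf,\gamma)})]$.
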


\begin{proof}
We need the following lemma:
\begin{lemma}[\cite{STG13bayesiandensity}, Lemma 4]
\label{lemma_STG}
For any two probability measures $P$ and $Q$ with density $p$ and $q$ and any $\lambda \in (0, e^{-4}]$,
\begin{equation*}
\Ebb_P \left(\log\frac{p}{q}\right)^2 \leq H(P,Q)^2 \left(12 + 2 \left(\log\frac{1}{\lambda}\right)^2 \right)
	+ 8 \Ebb_P\left[ \left(\log\frac{p}{q}\right)^2 \one\left(\frac{p}{q} \geq \frac{1}{\lambda}\right) \right] 
\end{equation*}
where $H(P,Q)$ is the Hellinger distance between $P$ and $Q$:
\begin{equation*}
H(P,Q)^2 = -2 \Ebb_P [(q/p)^{1/2} - 1] = \int (\sqrt{p} - \sqrt{q})^2 d \lambda.
\end{equation*}
\end{lemma}

\noindent
Let $n \in \Nbb^*$ and $D' = \CB (\log n)^2$.
Take $P = \Pbb^*_{Y_0 | Y_{-\infty}^{-1}}$ and $Q = \Pbb_{Y_0 | Y_{-\infty}^{-1}, (K,\Qbf,\gamma)}$, so that $\Ebb_P ( \log\frac{p}{q} )^2 = \Vbf(K,\Qbf,\gamma)$. Using equation~\eqref{eq_Li_borne_par_d} for $n \geq \exp(\Csigma)$,
\begin{align*}
\left(\log\frac{p}{q}\right)^2 &\leq \left( \sup_{\gamma' \in \Sbf_n^{(\gamma)}} | b_{\gamma'}(Y_0) | + |L_{0,\infty}^*|
		+ 2 \log \log n \right)^2 \\
	&\leq 3 \sup_{\gamma' \in \Sbf_n^{(\gamma)}} | b_{\gamma'}(Y_0) |^2
		+ 3 |L_{0,\infty}^*|^2
		+ 12 (\log \log n)^2
\end{align*}

Let $\lambda > 0$ be such that $2D' = \log \frac{1}{\lambda} - 2 \log \log n$. Note that $\lambda \leq e^{-4}$ when $n \geq e^4$. By equation~\eqref{eq_Li_borne_par_d},
\begin{align*}
\one\left(\frac{p}{q} \geq \frac{1}{\lambda}\right)
	&\leq \one\left(\sup_{\gamma' \in \Sbf_n^{(\gamma)}} | b_{\gamma'}(Y_0) | + |L_{0,\infty}^*| \geq \log \frac{1}{\lambda} - 2 \log \log n \right) \\
	&\leq \one\left(\sup_{\gamma' \in \Sbf_n^{(\gamma)}} | b_{\gamma'}(Y_0) | \vee |L_{0,\infty}^*| \geq D' \right),
\end{align*}
hence
\begin{align*}
&8 \Ebb_P \left[ \left(\log\frac{p}{q}\right)^2 \one\left(\frac{p}{q} \geq \frac{1}{\lambda}\right) \right] \\
	&\leq 24 \Ebb^*\left[|L_{0,\infty}^*|^2 \left(\one_{|L_{0,\infty}^*| > D'} + \one_{|L_{0,\infty}^*| \leq D' < \underset{\gamma' \in \Sbf_n^{(\gamma)}}{\sup} |b_{\gamma'}(Y_0)|}\right)\right] \\
		&\quad + 24 \Ebb^* \left[\sup_{\gamma' \in \Sbf_n^{(\gamma)}} |b_{\gamma'}(Y_0)|^2 \left( \one_{\underset{\gamma' \in \Sbf_n^{(\gamma)}}{\sup} |b_{\gamma'}(Y_0)| > D'}
		+ \one_{\underset{\gamma' \in \Sbf_n^{(\gamma)}}{\sup} |b_{\gamma'}(Y_0)| \leq D' < |L_{0,\infty}^*|} \right)\right] \\
		&\quad + 96 (\log \log n)^2 \Ebb^*\left[\one_{|L_{0,\infty}^*| > D'} + \one_{\underset{\gamma' \in \Sbf_n^{(\gamma)}}{\sup} |b_{\gamma'}(Y_0)| > D'} \right],
\end{align*}
and by Lemma~\ref{lem_tail_moments} (for $n$ large enough)
\begin{align*}
8 \Ebb_P \left[ \left(\log\frac{p}{q}\right)^2 \one\left(\frac{p}{q} \geq \frac{1}{\lambda}\right) \right]
	&\leq 24 \left(\frac{5D'^2}{n} + \frac{D'^2}{n} \right) + 24 \left(\frac{5D'^2}{n} + \frac{D'^2}{n} \right) \\
		&\qquad+ 96 \frac{2(\log \log n)^2}{n} \\
	&\leq \frac{480}{n} D'^2
\end{align*}
using $\log \log n \leq D'$ for $n \geq e$. Therefore, by Lemma~\ref{lemma_STG},
\begin{align*}
\Vbf(K,\Qbf,\gamma) &\leq \Ebb^*_{Y_{-\infty}^{-1}}\left[ H(\Pbb^*_{Y_0 | Y_{-\infty}^{-1}}, \Pbb_{Y_0 | Y_{-\infty}^{-1}, (K,\Qbf,\gamma)})^2 \right] (12 + 2 ( 2D' + 2 \log \log n )^2 ) \\
	&\qquad + \frac{480}{n} D'^2 \\
	&\leq \Ebb^*_{Y_{-\infty}^{-1}} \left[ KL(\Pbb^*_{Y_0 | Y_{-\infty}^{-1}} \| \Pbb_{Y_0 | Y_{-\infty}^{-1}, (K,\Qbf,\gamma)}) \right] (12 + 32 D'^2 ) + \frac{480}{n} D'^2
\end{align*}
using that the Kullback Leibler divergence is lower bounded by the Hellinger distance. Finally, since
$\Ebb^*_{Y_{-\infty}^{-1}}[ KL(\Pbb^*_{Y_0 | Y_{-\infty}^{-1}} \| \Pbb_{Y_0 | Y_{-\infty}^{-1}, (K,\Qbf,\gamma)})] = \Kbf(K,\Qbf,\gamma)$,
\begin{align*}
\Vbf(K,\Qbf,\gamma)
	&\leq 44 D'^2 \Kbf(K,\Qbf,\gamma) + \frac{480}{n} D'^2.
\end{align*}
\end{proof}

Next, let $D > 0$ and let us bound the difference between $\Vbf(K,\Qbf,\gamma)$ and $\Ebb^* [ t_{(K,\Qbf,\gamma)}^{(D)}(Y_{i-k}^i)^2 ]$. Taking ${t_{(K,\Qbf,\gamma)} : Y_{-k}^0 \longmapsto L_{0,k}^* - L_{0,k,x}(K,\Qbf,\gamma)}$, by definition of $t_{(K,\Qbf,\gamma)}^{(D)}$
\begin{equation*}
\Ebb^* [ t_{(K,\Qbf,\gamma)}^{(D)}(Y_{i-k}^i)^2 ] \leq \Ebb^* [ t_{(K,\Qbf,\gamma)}(Y_{i-k}^i)^2 ].
\end{equation*}

Then,
\begin{align*}
| \Ebb^* &[ t_{(K,\Qbf,\gamma)}(Y_{i-k}^i)^2 ] - \Vbf(K,\Qbf,\gamma) | \\
	={} & \left| \Ebb^* \left[ (L_{0,k}^* - L_{0,k,x}(K,\Qbf,\gamma))^2 \right]
		- \Ebb^* \left[ (L_{0,\infty}^* - L_{0,\infty}(K,\Qbf,\gamma))^2 \right] \right| \\
	\leq{} & \Ebb^* | ( (L_{0,k}^* - L_{0,\infty}^*) - (L_{0,k,x} - L_{0,\infty})(K,\Qbf,\gamma) ) \\
			&\quad \times ( (L_{0,k}^* - L_{0,k,x}(K,\Qbf,\gamma)) + (L_{0,\infty}^* - L_{0,\infty}(K,\Qbf,\gamma)) ) | \\
	\leq{} & 2 \frac{\rho^{k-1}}{1 - \rho} \left( \Ebb^* \left[ 2 \sup_{\gamma' \in \Sbf_n^{(\gamma)}}|b_{\gamma'}(Y_0)| + |L_{0,k}^*| + |L_{0,\infty}^*| \right] + 4 \log  \log n \right)
\end{align*}
by Lemma~\ref{lemma_ecart_Likx} and equation~\eqref{eq_Li_borne_par_d}, provided $\rho_* \leq \rho$ and $C_* \leq 1/(1 - \rho)$ (which is ensured by $\log n \geq (C_* \vee (1-\rho_*)^{-1})^{1/2}$). Note that the condition $k \geq \Csigma^2 (\log n)^3$ ensures that $\rho^k \leq n^{-1}$, and that $\rho \leq 1/2$ when $n \geq e^4$.
The expectation can be upper bounded using Lemma~\ref{lem_tail_moments} with $u=1$:
\begin{align*}
| \Ebb^* [ t_{(K,\Qbf,\gamma)}(Y_{i-k}^i)^2 ] - \Vbf(K,\Qbf,\gamma) |
	&\leq \frac{2}{n \rho(1 - \rho)} ( 8 \CB \log n + 4 \log \log n ) \\
	&\leq \frac{48 \Csigma^2 \CB}{n} (\log n)^3.
\end{align*}

Therefore, under the assumptions of Lemma~\ref{lemma_dvar}, if $D \geq \CB (\log n)^2$,
\begin{align*}
\frac{\Ebb^* [ t_{(K,\Qbf,\gamma)}^{(D)}(Y_{i-k}^i)^2 ]}{44 \CB^2 (\log n)^4}
	&\leq \Kbf(K,\Qbf,\gamma)
		+ \frac{11}{n}
		+ \frac{1}{44 \CB^2 (\log n)^4} \frac{48 \Csigma^2 \CB}{n} (\log n)^3 \\
	&\leq \Kbf(K,\Qbf,\gamma)
		+ \frac{11}{n}
		+ \frac{48 \Csigma^2}{44 n \log n} \\
	&\leq \Kbf(K,\Qbf,\gamma)
		+ \frac{22}{n}
\end{align*}
for $n$ larger than a constant that only depends on $\Csigma$, which concludes the proof.

\section*{Acknowledgements}

I am grateful to \'Elisabeth Gassiat for her precious advice and insightful discussions. I would also like to thank the anonymous referee for his patience and very helpful review.

\bibliography{these}

\appendix

\section{Proofs for the minimax adaptive estimation}
\label{sec_proof_minimax}

\subsection{Proofs for the mixture framework}

\subsubsection{Proof of Lemma~\ref{lemma_mixture_checking_assumptions} (checking the assumptions)}
\label{sec_preuve_lemma_mixture_checking_assumptions}

\paragraph{Checking \ref{Atail}}

By definition of the emission densities, $b_\gamma(y) \geq - 2 \log n$ for all $\gamma \in \Sbf_n^{(\gamma)}$. Moreover, for all $y \in \Ycal$ and $\gamma \in S_{K,M,n}^{(\gamma)}$,
\begin{align*}
b_\gamma(y)
	&\leq \log \left( \frac{1}{K} \sum_{x \in [K]} \left( 1 \vee \frac{\max_{\mu, s} \frac{1}{s} \psi\left(\frac{y - \mu}{s}\right)}{\Gref(y)} \right) \right) \\
	&\leq 0 \vee \left( \max_{\mu, s} \log \frac{1}{s} \psi\left(\frac{y - \mu}{s}\right) - \log \Gref(y) \right) \\
	&\leq 0 \vee \left( \max_{\mu, s} \left\{ \log \frac{1}{s} - \left( \frac{y-\mu}{s} \right)^p \right\} + \log(1 + y^2) + \log \frac{\pi}{2 \Gamma(1 + 1/p)} \right) \\
	&\leq 0 \vee \left( \log n - n^{-p} \min_{\mu} (y-\mu)^p + \log(1 + y^2) + \log \pi \right),
\end{align*}
where we recall that the maximum is taken over $\mu \in [-n,n]$ and $s \in [\frac{1}{n}, n]$.

If $y \in [-n,n]$,
\begin{align*}
b_\gamma(y)
	&\leq 0 \vee \left( \log n + \log(1 + y^2) + \log \pi \right) \\
	&\leq \log n + 1 + 2\log n + \log \pi \quad \text{since } n \geq 1\\
	&\leq 3 \log n + \log (\pi e) \leq 5 \log n
\end{align*}
as soon as $n \geq 3$. Otherwise, one can take $y \geq n$ and then
\begin{align*}
b_\gamma(y)
	&\leq 0 \vee ( \log n - n^{-p} (y-n)^p + \log(1 + y^2) + \log \pi ) \\
	&\leq 0 \vee ( \log n - n^{-p} (y-n)^p + \log(1 + 2(y-n)^2 + 2n^2) + \log \pi ) \\
	&\leq 0 \vee ( \log n - n^{-p} Y^p + \log(1 + 2Y^2) + 1 + \log 2n^2 + \log \pi )
\end{align*}
by writing $Y = y - n$ and using that $\log (a+b) \leq \log a + \log (1+b) \leq \log a + 1 + \log b$ when $a,b \geq 1$. Thus, writing $Y' = Y / n$,
\begin{align*}
b_\gamma(y)
	&\leq 3 \log n + \log (2e\pi) + 0 \vee ( - (Y')^p + \log(1 + 2n^2 (Y')^2) ) \\
	&\leq \begin{cases}
		3 \log n + \log (2e\pi) + \log(1 + 2n^2) \quad \text{if } Y' \leq 1 \\
		3 \log n + \log (2e\pi) + 0 \vee (- (Y')^p + 1 + \log(2n^2 (Y')^2) ) \quad \text{otherwise}
		\end{cases} \\
	&\leq 5 \log n + \log (4e^2\pi) + 0 \vee (- (Y')^p + 2\log(Y') ) \\
	&\leq 5 \log n + \log (4e^2\pi),
\end{align*}
so that $b_\gamma(y) \leq 10 \log n$ as soons as $n \geq 3$.

\paragraph{Checking \ref{Aentropy} and \ref{Agrowth}}

Let us first assume that there exists a constant $L_p$ such that the function $(\mu, s) \longmapsto \frac{s^{-1} \psi(s^{-1} (y-u))}{\Gref(y)}$ is $L_p$-Lipschitz for all $y$ (where the origin space is endowed with the supremum norm). Then a bracket covering of size $\epsilon$ of $([n, n] \times [\frac{1}{n}, n])^M$ provides a bracket covering of $\{ \gamma_x \}_{\gamma \in \Sbf_n^{(\gamma)}, x \in [K]}$ of size $L_p \epsilon$. Since there exists a bracket covering of size $\epsilon$ of $[n, n] \times [\frac{1}{n}, n]$ for the supremum norm with less than $(\frac{4n}{\epsilon} \vee 1)^2$ brackets, one gets \ref{Aentropy} by taking $\Caux(M, K, D, n) = 4 L_p n$ and $m_M = 2M$.

Let us now check that this constant $L_p$ exists.
\begin{align*}
\left| \frac{\partial}{\partial \mu} \frac{\frac{1}{s} \psi\left( \frac{y - \mu}{s} \right)}{\Gref(y)} \right|
	&= \frac{1}{2 \pi \Gamma(1 + \frac{1}{p}) (1 + y^2)} \left| \frac{\partial}{\partial \mu} \frac{1}{s} \exp\left( - \left(\frac{y - \mu}{s}\right)^p \right) \right| \\
	&= \frac{1}{2 \pi \Gamma(1 + \frac{1}{p}) (1 + y^2) s^2} \left|\frac{y - \mu}{s}\right|^{p-1} \exp\left( - \left(\frac{y - \mu}{s}\right)^p \right) \\
	&\leq \frac{1}{s^2} Y^{p-1} \exp(- Y^p) \\
	&\leq n^2 Z^{1 - 1/p} e^{- Z} \leq n^2
\end{align*}
by writing $Y = |y-\mu|/s$ and $Z = Y^p$. Likewise,
\begin{align*}
\left| \frac{\partial}{\partial s} \frac{\frac{1}{s} \psi\left( \frac{y - \mu}{s} \right)}{\Gref(y)} \right|
	&= \frac{1}{2 \pi \Gamma(1 + \frac{1}{p}) (1 + y^2)} \left| - \frac{1}{s^2} + p \frac{1}{s} \frac{(y - \mu)^p}{s^{p+1}} \right| \exp\left( - \left(\frac{y - \mu}{s}\right)^p \right) \\
	&\leq \frac{1}{s^2} | pZ - 1 | e^{-Z} \\
	&\leq n^2 \frac{p}{2}
\end{align*}
as soon as $p \geq 2$. Thus, one can take $L_p = p n^2$ and $\Caux(M, K, D, n) = 4 p n^3$. With this $\Caux$, checking \ref{Agrowth} is straightforward for all $\zeta > 0$: with $n \geq 4p$, it is ensured by $\log n^4 \leq n^\zeta$, which is always true for $\zeta = 2$ for instance.

\subsubsection{Proof of Lemma~\ref{lemma_mixture_approximation_rates} (approximation rates)}
\label{sec_proof_lemma_mixture_approximation_rates}

Let $F(y) = e^{-c|y|^\tau}$.
Lemma~4 of \cite{kruijer2010adaptive} ensures that there exist $c' > 0$ and $H \geq 6 \beta + 4 p$ such that for all $x \in [K^*]$ and $s > 0$, there exists a mixture $g_{s,x}$ with $O(s^{-1} |\log s|^{p / \tau})$ components, each with density $\frac{1}{s} \psi(\frac{\cdot - \mu}{s})$ with respect to the Lebesgue measure for some $\mu \in \{ y \, | \, F(y) \geq c' s^{H} \}$, such that $g_{s,x}$ approximates the emission density~$\gamma^*_x$:
\begin{equation*}
\max_x KL( \gamma^*_x \| g_{s,x} ) = O(s^{-2\beta}).
\end{equation*}

For $M$ large enough, the condition $M \geq O(s^{-1} | \log s |^{p/\tau})$ (number of components) is ensured by $s^{-1} \leq c M (\log M)^{-p/\tau}$ for some small enough constant $c > 0$. Take this $s$ in the following and $g_{M,x} = g_{s,x}$.

Let us now check that $(n^{-2} + (1 - n^{-2}) g_{M,x})_{x \in [K^*]} \in S^{(\gamma)}_{K^*,M,n}$.
When $M \leq n$ is large enough that $s \leq 1$, this $s$ is indeed in $[\frac{1}{n}, n]$.
When $|\mu| \geq s^{-1}$, $F(\mu) \leq \exp(-c s^{-\tau}) = o(c' s^H)$. Thus, for $s$ small enough (i.e. for $M$ large enough), all translation parameters $\mu$ belong to $[-s^{-1}, s^{-1}]$, which is indeed in $[-n,n]$ when $M \leq n$.

\subsubsection{Proof of Corollary~\ref{cor_mixture_minimax_estimation} (minimax adaptive estimation rate)}
\label{sec_proof_cor_mixture_minimax_estimation}

Denote by $h$ the Hellinger distance, defined by $h(p,q)^2 = \Ebb_P [(\sqrt{q/p} - 1)^2]$ for all probability densities $p$ and $q$ associated to probability measures $P$ and $Q$. Let
\begin{equation*}
\Hbf^2(K, \Qbf, \gamma) = \Ebb^*_{Y_{-\infty}^0} \left[ h^2(p^*_{Y_1 | Y_{-\infty}^0}, p_{Y_1 | Y_{-\infty}^0, (K, \Qbf, \gamma)}) \right]
\end{equation*}
be the Hellinger distance between the distributions of $Y_1$ conditionally to $Y_{-\infty}^0$ under the true distribution and under the parameters $(K, \Qbf, \gamma)$ (see Lemma~\ref{lemma_ecart_Likx} for the definition of these conditional distributions).

The following lemma shows that the Kullback-Leibler divergence and the Hellinger distance are equivalent up to a logarithmic factor and a small additive term.

\begin{lemma}
Assume that \ref{Astar_tail}, \ref{Astar_forgetting}, \ref{Atail} and \ref{Aergodic} hold.
Then there exists a constant $n_1$ depending on $\CB$, $\Csigma$, $\delta$ and $M_\delta$ such that for all $n \geq n_1$, 
for all $(K,\Qbf,\gamma) \in \Sbf_n$,
\begin{equation*}
\Hbf^2(K,\Qbf,\gamma)
	\leq \Kbf(K,\Qbf,\gamma)
	\leq 7 \CB (\log n)^2 \left( \Hbf^2(K,\Qbf,\gamma) + \frac{2}{n} \right).
\end{equation*}
\end{lemma}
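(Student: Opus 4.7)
The plan is to establish the lower bound first (which is standard) and then tackle the upper bound via a truncated KL--$H^2$ comparison. For the lower bound, I would apply the pointwise inequality $H^2(P,Q) \leq KL(P\|Q)$ (which follows from $1-t \leq -\log t$) to the conditional distributions $P = \Pbb^*_{Y_1\mid Y_{-\infty}^0}$ and $Q = \Pbb_{Y_1\mid Y_{-\infty}^0,(K,\Qbf,\gamma)}$, then integrate against $\Pbb^*_{Y_{-\infty}^0}$. The identity $\Kbf = \Ebb^*_{Y_{-\infty}^0}[KL(P\|Q)]$ recalled after Lemma~\ref{lemma_ecart_Likx} immediately yields $\Hbf^2 \leq \Kbf$.

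For the upper bound I would use a truncated KL--$H^2$ comparison analogous to Lemma~4 of \cite{STG13bayesiandensity} but linear in $\log(p/q)$: for probability densities $p,q$ and any $M \geq 1$,
\begin{equation*}
KL(P\|Q) \leq (2 + \log M)\, H^2(P,Q) + \Ebb_P\!\left[\log\tfrac{p}{q}\cdot\one(p/q > M)\right].
\end{equation*}
This follows from the elementary pointwise inequality $x\log x - x + 1 \leq (2+\log M)(\sqrt{x}-1)^2$ valid on $x \in (0,M]$, integrated against $q\,d\mu$. Applied pointwise (given $Y_{-\infty}^0$) with $\log M = 2 B(n) v^q + \log(1/\sigma_-(n))$ and $v = \log n$, equation~\eqref{eq_Li_borne_par_d} guarantees that $\log(p/q) \leq \log M$ on the event $\{|L_{1,\infty}^*| \vee \sup_{\gamma' \in \Sbf_n^{(\gamma)}}|b_{\gamma'}(Y_1)| \leq B(n) v^q\}$, so the truncation term is supported on its complement.

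Integrating over $Y_{-\infty}^0$ and controlling the remaining truncation residual with exactly the same machinery as in Section~\ref{sec_preuve_lemma_troncature} and in the proof of Lemma~\ref{lemma_dvar}---namely, combining \textbf{[Atail]} and \textbf{[A$\star$tail]} with the moment estimates of Lemma~\ref{lemma_moments_W}---produces a tail contribution of order $(B(n)v^q + \log(1/\sigma_-(n)))\,e^{-v}$, which is $O(1/n)$ for $v = \log n$ and $B(n) = C_B \log n$ (using $n \geq \exp(B^*/C_B)$ to guarantee $B(n) \geq B^*$). The leading log factor rewrites as $2 + \log M = 2 + 2 C_B(\log n)^{1+q} + \log(1/C_\sigma) + \log\log n$, which is bounded by $5 C_B(\log n)^2$ for $n$ larger than some $n_1 = n_1(C_B,C_\sigma)$; enlarging $n_1$ further to absorb the $O(1/n)$ tail into the $3/n$ slack gives the announced $\Kbf \leq 5 C_B(\log n)^2(\Hbf^2 + 3/n)$.

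The main obstacle is the truncated KL--$H^2$ comparison together with careful tracking of the prefactor so that it comes out as $5 C_B(\log n)^2$; using $q \leq 1$ is what allows one to absorb the $\log(1/\sigma_-(n)) \asymp \log\log n$ term into the uniform $(\log n)^2$ factor. Once this inequality is in hand, the tail control is a direct repackaging of estimates already established in the proofs of Lemma~\ref{lemma_ecart_barnuk_barnuktronque} and Lemma~\ref{lemma_dvar}.
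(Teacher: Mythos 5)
Your proof is correct and follows essentially the same route as the paper: the lower bound via the pointwise domination of $H^2$ by $KL$ applied to the conditional laws, and the upper bound via a truncated Kullback--Leibler/Hellinger comparison whose truncation residual is killed by \textbf{[Atail]}, \textbf{[A$\star$tail]} and equation~\eqref{eq_Li_borne_par_d}. The only difference is cosmetic: the paper invokes Lemma~4 of \cite{STG13bayesiandensity} directly (with threshold $v = 2C_B(\log n)^2$), whereas you re-derive an equivalent comparison inequality from the elementary bound $x\log x - x + 1 \leq (2+\log M)(\sqrt{x}-1)^2$ on $(0,M]$ with $\log M = 2B(n)v^q + \log(1/\sigma_-(n))$; both yield the $5C_B(\log n)^2$ prefactor and the same $O((\log n)^2/n)$ residual.
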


\begin{proof}
The lower bound comes from the fact that the square of the Hellinger distance is smaller than the Kullback-Leibler divergence. For the upper bound, we use Lemma~4 of \cite{STG13bayesiandensity}: for all $v \geq 4$ and for all probability measures $P$ and $Q$ with densities $p$ and $q$,
\begin{equation*}
KL(p \| q) \leq h^2(p,q)\left( 1 + 2 v \right)
	+ 2 \Ebb_P \left[ \left(\log \frac{p}{q}\right) \one\left\{ \log \frac{p}{q} \geq v \right\} \right].
\end{equation*}

Take $p = p^*_{Y_1 | Y_{-\infty}^0}$ and $q = p_{Y_1 | Y_{-\infty}^0, (K,\Qbf,\gamma)}$. Then by equation~\eqref{eq_Li_encadre_par_d}, $\log \frac{p}{q} \leq |b_\gamma| + |L_{1,\infty}^*| + \log (\Csigma \log n)$ where $L_{1,\infty}^*$ is as in Lemma~\ref{lemma_ecart_Likx} and $\one\left\{ \log \frac{p}{q} \geq v \right\} \leq \one\{ |b_\gamma| \geq \frac{1}{2} (v - \log (\Csigma \log n) ) \} \vee \one\{ |L_{1,\infty}^*| \geq \frac{1}{2} (v - \log (\Csigma \log n) ) \}$. There exists $n_1$ depending only on $\CB$ and $\Csigma$ such that for all $n \geq n_1$, $\log (\Csigma \log n) \leq \CB (\log n)^2$. Assume $n \geq n_1$ and take $v = 3 \CB (\log n)^2$, then
 $\frac{1}{2}(v - \log (\Csigma \log n)) \geq (\CB \log n)^2$ and $1+2v \leq 7\CB (\log n)^2$, so that
\begin{align*}
\Kbf(K,\Qbf,\gamma)
	& \leq 7 \CB (\log n)^2 \Hbf^2(K,\Qbf,\gamma) \\
		+& 2 \CB (\log n)^2 \left\{ \Pbb^*( |b_\gamma| \geq \CB (\log n)^2 ) + \Pbb^*( |L_{1,\infty}^*| \geq \CB (\log n)^2 ) \right\} \\
		+& 2 \Ebb^* [ ( |L_{1,\infty}^*| + |b_\gamma| ) \\
		&\qquad \qquad \times (\one\{ |L_{1,\infty}^*| \geq \CB (\log n)^2 \} \vee \one\{ |b_\gamma| \geq \CB (\log n)^2 \}) ].
\end{align*}

By Lemma~\ref{lem_tail_moments}, which also holds for $L_{1,\infty}^*$ using the uniform convergence of Lemma \ref{lemma_ecart_Likx}, $\Pbb^*( |L_{1,\infty}^*| \geq \CB (\log n)^2 ) \leq \exp(-\log n) \leq n^{-1}$ for $n \geq n_0$ where $n_0$ is defined in Lemma~\ref{lem_tail_moments} (and depends on $\delta$ and $M_\delta$).
Likewise, by \ref{Atail}, ${\Pbb^*( |b_\gamma| \geq \CB (\log n)^2 ) \leq n^{-1}}$.

The last expectation of the above equation can be written as
\begin{align*}
2 \Ebb^* [ ( a + b ) \one\{ a \vee b \geq \CB (\log n)^2 \} ]
\end{align*}
where $a = |L_{1,\infty}^*|$ and $b = |b_\gamma|$. Then,
\begin{align*}
2 \Ebb^* [ a \one\{ a \vee b \geq \CB  &  (\log n)^2 \}) ] \\
	=& 2 \Ebb^* [ a \one\{ a \geq \CB (\log n)^2 \}) ]
		+ 2 \Ebb^* [ a \one\{ b \geq \CB (\log n)^2 > a \}) ] \\
	\leq{} & 4 \CB (\log n)^2 e^{- \log n}
		+ 2 \CB (\log n)^2 \Pbb^* [ b \geq \CB (\log n)^2 ] \\
	\leq{} & 6 \CB \frac{(\log n)^2}{n}
\end{align*}
by Lemma~\ref{lem_tail_moments} for the first term and \ref{Atail} for the second one. Likewise,
\begin{align*}
2 \Ebb^* [ b \one\{ a \vee b \geq \CB (\log n)^2 \}) ]
	\leq 6 \CB \frac{(\log n)^2}{n},
\end{align*}
so that finally
\begin{equation*}
\Kbf(K,\Qbf,\gamma)
	\leq 7 \CB (\log n)^2 \Hbf^2(K,\Qbf,\gamma) + 14 \CB \frac{(\log n)^2}{n},
\end{equation*}
which concludes the proof.
\end{proof}

Let $M \in \Nbb^*$. Let $g_{M,x}$ be the approximating densities given by Lemma~\ref{lemma_mixture_approximation_rates} and write $\gamma_{M,x} = n^{-2} + (1 - n^{-2}) g_{M,x}$ for all $x \in [K^*]$. The following lemma controls the error $\Hbf(K^*, \Qbf^*, (\gamma_{M,x})_x )$ coming from the approximation of the densities.
\begin{lemma}
Let $\sigma^* > 0$ be such that $\sigma^* \leq K^* \Qbf^*(x,x') \leq (\sigma^*)^{-1}$ for all $x,x' \in [K^*]$. Then
\begin{equation*}
\Hbf^2(K^*, \Qbf^*, (\gamma_{M,x})_x )
	\leq \left(2 + \frac{32 (K^*)^3}{(\sigma^*)^{11}} \right)
		\sum_{x \in [K^*]} h^2(\gamma^*_x, \gamma_{M,x})
\end{equation*}
\end{lemma}

\begin{proof}
Let $p^*_x = p^*(X_1 = x | Y_{-\infty}^0)$ and $p_x = p_{(K^*, \Qbf^*, (\gamma_{M,x})_x)}(X_1 = x | Y_{-\infty}^0)$. The Cauchy-Schwarz inequality implies that $(\sqrt{\sum_x a_x} - \sqrt{\sum_x b_x})^2 \leq \sum_x (\sqrt{a_x} - \sqrt{b_x})^2$, so that
\begin{align*}
h^2 \left(\sum_x p^*_x \gamma^*_x, \sum_x p_x \gamma_{M,x} \right)
	&= \int \left( \sqrt{\sum_x p^*_x \gamma^*_x} - \sqrt{\sum_x p_x \gamma_{M,x}} \right)^2 d\lambda \\
	\leq{} & \int \sum_x (\sqrt{p^*_x \gamma^*_x} - \sqrt{p_x \gamma_{M,x}})^2 d\lambda \\
	\leq{} & 2 \int \sum_x \left( p_x (\sqrt{\gamma^*_x} - \sqrt{\gamma_{M,x}})^2 + (\sqrt{p_x} - \sqrt{p^*_x})^2 \gamma^*_x \right) d\lambda \\
	\leq{} & 2 \sum_x p_x h^2 (\gamma^*_x, \gamma_{M,x} )
		+ 2 \sum_x (\sqrt{p^*_x} - \sqrt{p_x} )^2 \\
	\leq{} & 2 \sum_x h^2 (\gamma^*_x, \gamma_{M,x} )
		+ 2 \sum_x (\sqrt{p^*_x} - \sqrt{p_x} )^2
\end{align*}

Thus, one needs to control the expectation of the second term. 
Since $p_x$ and $p^*_x$ belong to $[\frac{\sigma^*}{K^*}, \frac{1}{K^* \sigma^*}]$ by assumption on $\Qbf^*$,
\begin{equation*}
\sum_x (\sqrt{p_x} - \sqrt{p^*_x})^2
	\in \left[ \frac{K^* \sigma^*}{4}, \frac{K^*}{4 \sigma^*} \right] \sum_x (p_x - p^*_x)^2.
\end{equation*}

The following equation follows from a careful reading of the proof of Proposition 2.1 of \cite{dCGLLC15} by noticing that the roles of $\gamma^*$ and $\gamma_M$ are symmetrical in their proof and that their reasoning works with $\rho_\star = 1 - \min \Qbf_\star / \max \Qbf_\star$.
\begin{equation*}
\sum_x | p_x - p^*_x |
	\leq \frac{4 K^*}{(\sigma^*)^3} \sum_{i = 0}^{+ \infty} (1 - (\sigma^*)^2)^i \frac{\max_x | \gamma^*_x(Y_{-i}) - \gamma_{M,x}(Y_{-i}) |}{\sum_x \gamma^*_x(Y_{-i}) \vee \sum_x \gamma_{M,x}(Y_{-i})}.
\end{equation*}

Therefore, using Cauchy-Schwarz's inequality:
\begin{align*}
\sum_x ( p_x - p^*_x )^2
	&\leq \left(\sum_x | p_x - p^*_x |\right)^2 \\
	&\leq \frac{16 (K^*)^2}{(\sigma^*)^8} \sum_{i = 0}^{+ \infty} (1 - (\sigma^*)^2)^i \left(\frac{\max_x | \gamma^*_x(Y_{-i}) - \gamma_{M,x}(Y_{-i}) |}{\sum_x \gamma^*_x(Y_{-i}) \vee \sum_x \gamma_{M,x}(Y_{-i}) }\right)^2.
\end{align*}

Since $\frac{|a-b|}{2 \sqrt{a \vee b}} \leq |\sqrt{a} - \sqrt{b}|$,
\begin{align*}
\Ebb^* \left(\frac{\max_x | \gamma^*_x(Y) - \gamma_{M,x}(Y) |}{\sum_x \gamma^*_x(Y) \vee \sum_x \gamma_{M,x}(Y) }\right)^2
	&\leq \int \frac{\max_x ( \gamma^*_x(y) - \gamma_{M,x}(y) )^2}{\sum_x \gamma^*_x(y) \vee \sum_x \gamma_{M,x}(y) } d\lambda(y) \\
	&\leq \sum_x \int \frac{( \gamma^*_x(y) - \gamma_{M,x}(y) )^2}{\gamma^*_x(y) \vee \gamma_{M,x}(y) } d\lambda(y) \\
	&\leq 4 \sum_x \int \left( \sqrt{\gamma^*_x(y)} - \sqrt{\gamma_{M,x}(y)} \right)^2 d\lambda(y) \\
	&= 4 \sum_x h^2(\gamma^*_x, \gamma_{M,x}),
\end{align*}
so that
\begin{align*}
\Ebb^* \left[\sum_x (\sqrt{p^*_x} - \sqrt{p_x} )^2\right]
	&\leq \frac{K^*}{4 \sigma^*} \Ebb^* \left[\sum_x ( p_x - p^*_x )^2 \right] \\
	&\leq \frac{16 (K^*)^3}{(\sigma^*)^{11}} \sum_x h^2(\gamma^*_x, \gamma_{M,x}),
\end{align*}
which concludes the proof of the lemma.
\end{proof}

Finally, since $|\sqrt{a+b} - \sqrt{c}| \leq |\sqrt{a} - \sqrt{c}| + \sqrt{|b|}$ for all $b \in \Rbb$, $a \geq (-b) \vee 0$ and $c \geq 0$, for all $x$,
\begin{align*}
h^2(\gamma^*_x, \gamma_{M,x} ) &\leq 2 h^2(\gamma^*_x, g_{M,x} ) + \frac{4}{n^2} \\
	&\leq 2 KL(\gamma^*_x \| g_{M,x}) + \frac{4}{n^2}.
\end{align*}

Therefore,
\begin{multline*}
\Kbf(K^*, \Qbf^*, (\gamma_{M,x})_x) \leq 14 \CB \frac{(\log n)^2}{n} \\
	+ 7\CB (\log n)^2 \left( 2 + \frac{32 (K^*)^3}{(\sigma^*)^{11}} \right) \sum_{x \in [K^*]} \left( \frac{4}{n^2} + 2 KL(\gamma^*_x, g_{M,x}) \right).
\end{multline*}

Thus, there exists a constant $C$ such that for all $n \geq 3$,
\begin{equation*}
\Kbf(K^*, \Qbf^*, (\gamma_{M,x})_x) \leq C (\log n)^2 \left(\frac{1}{n} + M^{-2\beta} (\log M)^{2\beta \frac{p}{\tau}} \right)
\end{equation*}
by definition of the densities $g_{M,x}$.

The choice of penalty verifies the lower bound of Theorem~\ref{th_oracle_simplifie}. Thus, the oracle inequality of Theorem~\ref{th_oracle_simplifie} with $\eta = 1$, $\alpha = 2$ and $t = 2 \log n$ entails that for $n$ large enough and for any sequence $(M_n)_n$ such that $K^* \leq M_n \leq n/2$ for all $n$:
\begin{align*}
\Kbf(\hat{K}, \hat{\Qbf}, \hat{\gamma})
	\leq{} & 2 \Kbf(K^*, \Qbf^*, (\gamma_{M_n,x})_x) + 4 \pen_n(K^*, M_n) + A \frac{(\log n)^{10}}{n} \\
	\leq{} & 2 C (\log n)^2 \left(\frac{1}{n} + M_n^{-2\beta} (\log n)^{2\beta \frac{p}{\tau}} \right) \\
		&+ 4 K^* \frac{(\log n)^{18}}{n} M_n + 2A \frac{(\log n)^{10}}{n}.
\end{align*}

Taking $M_n \sim n^{\frac{1}{2\beta + 1}} (\log n)^{\frac{2\beta p/\tau - 16}{2\beta + 1}}$ leads to the desired rate.

\section{Proof of the control of \texorpdfstring{$\bar{\nu}_k$}{nu\_k} (Theorem~\ref{th_penalite_L2})}
\label{sec_proof_control_barnuk}

Let us give an overview of the proof of the control of $\bar{\nu}_k$.

The first step of the proof is to obtain a Bernstein inequality on $\bar{\nu}_k(t)$ for a single function $t$. This is done using the mixing properties of the process $(Y_i)_i$ and by noticing that $\bar{\nu}_k(t)$ is the deviation of an empirical mean.

The second step is to transform the inequality on one function $t$ into an inequality on the supremum over all function $t$ belonging to a given class. This step involves the bracketing entropy of the aforementionned class. The control of this entropy is where the shape of the penalty appears.

At this stage, one is able to upper bound the supremum of $\bar{\nu}_k(t^{(D)}_{(K, \Qbf, \gamma)})$ over all parameters ${(K, \pi, \Qbf, \gamma) \in S_{K,M,n}}$. However, this upper bound is of order $n^{-1/2}$ (up to logarithmic factors), which is suboptimal. The third step of the proof gets rid of the $n^{-1/2}$ term by considering the processes
\begin{equation*}
W_{K,M,n} :=
\sup_{(K, \pi, \Qbf, \gamma) \in S_{K,M,n}} \frac{| \bar{\nu}_k(t_{(K, \Qbf, \gamma)}^{(D)}) |}{\Ebb^*[t_{(K, \Qbf, \gamma)}^{(D)}(Z_0)^2] + x_{K,M,n}^2}
\end{equation*}
for some constants $x_{K,M,n}$. The last step of the proof consists in taking appropriate $x_{K,M,n}$ in order to have with high probability and for all $K$ and $M$
\begin{equation*}
\begin{cases}
W_{K,M,n} \leq \epsilon \\
W_{K,M,n} x_{K,M,n}^2 \leq \pen_n(K,M) + R_n
\end{cases}
\end{equation*}
for a residual term $R_n$ depending on the probability, which leads to the desired inequality
\begin{equation*}
\forall (K, \pi, \Qbf, \gamma) \in S_{K,M,n}, \  | \bar{\nu}_k(t_{(K, \Qbf, \gamma)}^{(D)}) | - \pen_n(K,M)
	\leq \epsilon \Ebb^*[t_{(K, \Qbf, \gamma)}^{(D)}(Z_0)^2] + R_n.
\end{equation*}

The concentration results are stated in Section~\ref{sec_proofconcentration_concentration}. The control of the bracketing entropy is done in Section~\ref{sec_control_bracketing}. Finally, the choice of $x_{K,M,n}$ and the synthesis of the proof are done in Section~\ref{sec_choice_parameters}.

Without loss of generality, we assume $n \geq \exp(\Csigma)$ and $D \geq \log n$ so that $\|t^{(D)}_{(K, \Qbf, \gamma)}\|_\infty \leq 4D$ for all $(K, \pi, \Qbf, \gamma) \in S_{K,M,n}$ by equation~\eqref{eq_Li_borne_par_d} and $n$ larger than the constant $n_0$ from Lemma~\ref{lem_tail_moments}.

\paragraph{Changes of notations.}

In the rest of this section, we omit the dependency of $W_{K,M}$, $x_{K,M}$ and $S_{K,M}$ on $n$ in the notations. We also introduce the notation $\theta \in \Sbf_n$ instead of $(K,\pi,\Qbf,\gamma) \in \Sbf_n$ to make the notation shorter. Given $\theta  \in \Sbf_n$, we write $\pi_\theta$, $\Qbf_\theta$ and $\gamma_\theta$ its components. To avoid multiple subscripts, we write $\gamma_\theta(y | x)$ instead of $\gamma_{\theta,x}(y)$.

\subsection{Concentration inequality}
\label{sec_proofconcentration_concentration}

First, let us introduce some notations.
Let $D > 0$, $K \geq 1$, $M \in \Mcal$ and $k \geq 1$. For all $i \in \Zbb$, let $Z_i = Y_{i-k}^i$. Define for all $\sigma > 0$ the sets
\begin{equation*}
\Bbf_\sigma = \{ \theta \in S_{K,M} \; | \; \Ebb^*[t_\theta^{(D)}(Z_0)^2] \leq \sigma^2 \}.
\end{equation*}

Let $d_k$ be the semi-distance defined by $d_k^2(t_1, t_2) = \Ebb^*[(t_1 - t_2)^2(Z_0)]$. For any semi-distance $d$, write $N(A, d, \epsilon) = e^{H(A, d, \epsilon)}$ the minimal cardinality of a covering of $A$ by brackets of size $\epsilon$ for the semi-distance $d$, that is by sets $[t_1, t_2] = \{ {t : \Ycal^k \longmapsto \Rbb} \, , \, t_1(\cdot) \leq t(\cdot) \leq t_2(\cdot) \}$ such that $d(t_1, t_2) \leq \epsilon$. $H(A, d, \cdot)$ is called the \emph{bracketing entropy} of $A$ for the semi-distance $d$.

The first step of the proof is to obtain a Bernstein inequality for the deviations of a single $t^{(D)}(Z_i)$.
\begin{theorem}
Assume \ref{Astar_mixing} holds. Then there exists a constant $C_\mix$ depending on $c_*$ and $n_*$ such that the following holds.

Let $t$ be a real valued, measurable bounded function on $\Ycal^{k+1}$ and let $V = \Ebb^* [t^2(Z_0)]$. Then for all $\lambda \in (0, \frac{1}{C_\mix (n_*+k+1) \| t \|_\infty (\log n)^2})$ and for all $n \in \Nbb$:
\begin{align*}
\phi(\lambda) :={} & \log \Ebb^* \exp \left[ \lambda \sum_{i=1}^{n}\left(t(Z_i)-\Ebb^* t(Z_i)\right)  \right] \\
	\leq{} & \frac{C_\mix^2 (n_*+k+1)^2 (n V + \| t \|_{\infty}^2) \lambda^2}{1 - C_\mix (n_*+k+1) \| t \|_{\infty} (\log n)^2 \lambda}
\end{align*}
\end{theorem}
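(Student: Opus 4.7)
My plan is to establish the Laplace-transform bound via a classical block decomposition that exploits the geometric $\rho$-mixing of the process $(Y_i)$, which is inherited by $(Z_i)_i$ up to a shift of $k$.

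First I introduce $X_i := t(Z_i) - \Ebb^* t(Z_i)$, a stationary centered sequence bounded by $2\|t\|_\infty$ with variance at most $V$. Since $Z_i = Y_{i-k}^i$, the $\rho$-mixing coefficient of $(Z_i)_i$ at lag $j > k$ is $\rho_{\mix}(j-k)$, so \textbf{[A$\star$mixing]} yields $\rho_{\mix,Z}(j) \leq 4 e^{-c_*(j-k)}$ whenever $j \geq n_* + k$. In particular $(Z_i)_i$ is exponentially $\alpha$-mixing with the same rate (using $\alpha_{\mix} \leq \rho_{\mix}/4$).

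Next, I would partition $\{1,\dots,n\}$ into $N$ consecutive blocks $B_1,\dots,B_N$ of size $q$ separated by gaps of size $p$, both of order $(n_*+k+1)c_*^{-1}\log n$, so that $\rho_{\mix,Z}(p)=O(n^{-\alpha_0})$ for an arbitrarily large exponent $\alpha_0$. Setting $U_j := \sum_{i\in B_j} X_i$, one has $|U_j|\leq 2q\|t\|_\infty$ and $\Var(U_j) \leq qV + 2\sum_{s<t\in B_j}|\Cov(X_s,X_t)| \leq CqV$, via the standard bound $|\Cov(X_s,X_t)|\leq 2\rho_{\mix,Z}(t-s)\|X_s\|_2\|X_t\|_2$ and the summability of $\rho_{\mix,Z}$.

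Then I would iterate a coupling lemma (the $\rho$-mixing analog of Bradley's lemma, or Berbee's lemma applied after passing through a $\beta$-mixing bound) to replace $(U_1,\dots,U_N)$ by independent copies $(\tilde U_1,\dots,\tilde U_N)$ with identical marginals, at a total cost in the Laplace transform controlled by $N\rho_{\mix,Z}(p)\exp(2\lambda q\|t\|_\infty)$, which is negligible as long as $\lambda q\|t\|_\infty = O(1)$. Applying the classical Bernstein cumulant bound to each $\tilde U_j$,
\[
\log\Ebb^*\exp(\lambda\tilde U_j) \leq \frac{\lambda^2 \Var(U_j)/2}{1 - (2/3)\lambda q\|t\|_\infty},
\]
and summing over the $N\leq n/(p+q)$ blocks (plus an analogous contribution from the gap sums treated symmetrically) yields the Laplace-transform bound in the form claimed. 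Equivalently, one can invoke directly the Bernstein inequality of Merlevède--Peligrad--Rio for exponentially $\alpha$-mixing sequences applied to $(X_i)$, and translate the resulting deviation bound into a Laplace-transform bound via the standard Chernoff duality.

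The main obstacle will be the bookkeeping needed to match the advertised form. The factor $(n_*+k+1)^2$ reflects both the window length $k+1$ in the definition of $Z_i$ and the mixing threshold $n_*$ from \textbf{[A$\star$mixing]}; the $(\log n)^2$ in the denominator arises from combining the gap length $p$ (one $\log n$ coming from the exponential mixing barrier) with a further logarithmic factor absorbed in the coupling step to keep the residual polynomially small in $n$; and the additive $\|t\|_\infty^2$ in the numerator accounts for boundary and gap-sum contributions that cannot be absorbed into the main $nV$ term. Choosing $q$ proportional to $p$ and collecting terms then produces the stated bound with a constant $C_{\mix}$ depending only on $n_*$ and $c_*$.
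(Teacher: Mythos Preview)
Your block-and-coupling sketch is a valid route to the inequality, but it is not the paper's argument and is considerably heavier. The paper does \emph{not} re-derive a Bernstein inequality for mixing sequences; it uses the Laplace-transform bound of Merlev\`ede--Peligrad--Rio (Theorem~2 of \cite{merlevede2009bernstein}) as a black box. The only work is to put $(Z_i)$ into a form where that theorem applies, i.e.\ where the $\alpha$-mixing decays geometrically \emph{from lag $1$}. This is done by a \emph{decimation} rather than a block decomposition: one splits $(Z_i)$ into the $(n_*+k+1)$ interleaved subsequences $(Z_{i(n_*+k+1)+j})_{i}$, $j=1,\dots,n_*+k+1$. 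Each subsequence has $\alpha$-mixing coefficient $\leq e^{-c_* n_* n}$ for all $n\geq 1$, so Merlev\`ede--Peligrad--Rio applies directly and yields a bound on $\phi_j(\lambda)$; the variance parameter $v$ is controlled by $V$ using $\rho$-mixing on the covariance series. The subsequences are recombined by the generalized H\"older inequality $\Ebb\prod_j A_j \leq \prod_j(\Ebb A_j^{m})^{1/m}$ with $m=n_*+k+1$, giving $\phi(\lambda)\leq (n_*+k+1)^{-1}\sum_j \phi_j((n_*+k+1)\lambda)$. This is where the factor $(n_*+k+1)^2$ in the numerator and $(n_*+k+1)$ in the denominator come from; the $(\log n)^2$ is inherited verbatim from the Merlev\`ede--Peligrad--Rio statement.

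Two minor cautions on your sketch. First, your ``equivalently, invoke Merlev\`ede--Peligrad--Rio directly on $(X_i)$'' does not quite work: the hypothesis there is $\alpha_{\mix}(n)\leq e^{-2cn}$ for \emph{all} $n$, whereas for $(Z_i)$ this only holds for lags $\geq n_*+k$; forcing a uniform rate makes $c$ depend on $k$ and spoils the constants. The decimation is precisely what fixes this. Second, Berbee's lemma is a $\beta$-mixing tool and there is no general bound of $\beta$ by $\rho$; you would need an $\alpha$-mixing coupling instead (which does exist, e.g.\ in Rio's monograph), so the coupling step is fine in spirit but your citation is off.
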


\begin{proof}
The following result is a Bernstein inequality for exponentially $\alpha$-mixing processes.

\begin{lemma}[\cite{merlevede2009bernstein}, Theorem 2]
\label{lemma_BernsteinAlphaMixing}
Let $(A_i)_{i \geq 1}$ be a stationary sequence of centered real-valued random variables such that $\| A_1 \|_\infty \leq M$ and whose $\alpha$-mixing coefficients satisfy, for a certain $c > 0$,
\begin{equation*}
\forall n \in \Nbb, \qquad \alpha_\mix(n) \leq e^{-2cn}.
\end{equation*}

Then there exist positive constants $C_1$ and $C_2$ depending on $c$ such that for all $n \geq 2$ and all $\lambda \in (0, \frac{1}{C_1 M (\log n)^2} )$,
\begin{equation*}
\log \Ebb \exp \left[ \lambda \sum_{i=1}^{n} A_i \right]
	\leq \frac{C_2 \lambda^{2} (n v + M^2)}{1 - C_1 \lambda M (\log n)^2},
\end{equation*}
where $v$ is defined by
\begin{equation*}
v = \Var(A_1) + 2 \sum_{i > 1} | \Cov \, (A_1, A_i) |.
\end{equation*}
\end{lemma}

Assumption \ref{Astar_mixing} implies that the $\alpha$-mixing coefficients of $(Y_i)_i$ satisfy $\alpha_\mix(n) \leq e^{-c_* n}$ for all $n \geq n_*$ since $4\alpha_\mix(n) \leq \rho_\mix(n)$ (see for instance \cite{bradley2005strongmixingsurvey}). However, this is not enough to apply the previous result: one needs the inequality to hold for all $n$ (and not for $n$ larger than some constant) and for the process $(Z_i)_i$. To do so, we partition the process $(Z_i)_i$ into several processes for which the above result applies, and then gather the inequalities.

Consider the processes $(Z_{i (n_*+k+1) + j})_i$ with $\alpha$-mixing coefficients $\alpha_{Z,j}(n)$. By construction, they satisfy $\alpha_{Z,j}(n) \leq e^{-c_* n_* n}$ for all $n \geq 1$ and $j \in \{1, \dots, n_*+k+1\}$. Apply Lemma~\ref{lemma_BernsteinAlphaMixing}, one gets that there exist two positive constants $C_1$ and $C_2$ depending on $c_*$ and $n_*$ such that for all functions $t$, all $\lambda \in (0, \frac{1}{C_1 M (\log n)^2})$ and all $n \in \Nbb$:
\begin{align*}
\phi_j(\lambda) &:= \log \Ebb^* \exp \left[ \lambda \sum_{i=1}^{n} ( t(Z_{i (n_*+k+1) + j}) - \Ebb t(Z_{i (n_*+k+1) + j}) ) \right] \\
	&\leq \frac{C_2 \lambda^{2} (n v + \| t \|_{\infty}^2)}{1 - C_1 \lambda \| t \|_{\infty} (\log n)^2}
\end{align*}
where, denoting $V = \Ebb^* t^2(Z_0)$:
\begin{align*}
v &= \Var(t(Z_j)) + 2 \sum_{i > 1} | \Cov \, (t(Z_j), t(Z_{i (n_*+k+1) + j}) | \\
	&\leq V + 2 V \sum_{i > 1} |\Corr \, (t(Z_j), t(Z_{i (n_*+k+1) + j}) | \\
	&\leq V \left(1 + 8 \sum_{i > 1} e^{-c_* n_* i}\right)
	 \leq \frac{8 V}{1 - e^{-c_* n_*}}
\end{align*}
using \ref{Astar_mixing}.
Finally, using that $\Ebb \prod_{i=1}^k A_i \leq \prod_{i=1}^k (\Ebb A_i^k)^{1/k}$ for any positive integer $k$ and any positive random variable $(A_i)_{1 \leq i \leq k}$,
\begin{equation*}
\phi(\lambda) \leq \frac{1}{n_*+k+1} \sum_{j=1}^{n_*+k+1} \phi_j((n_*+k+1)\lambda),
\end{equation*}
so that
\begin{equation*}
\phi(\lambda)
	\leq \frac{\frac{8 C_2}{1 - e^{-c_* n_*}} (n_*+k+1)^2 \lambda^{2} (n V + \| t \|_{\infty}^2)}{1 - C_1 (n_*+k+1) \lambda \| t \|_{\infty} (\log n)^2},
\end{equation*}
which concludes the proof.
\end{proof}

The following result follows \textit{mutatis mutandis} from the proof of Theorem 6.8 of \cite{Mas07} using the previous theorem.
\begin{lemma}
\label{lemma_inegalite_concentration}
Assume \ref{Astar_mixing} holds. Then there exists a constant $C^* \geq 1$ depending on $n_*$ and $c_*$ such that the following holds.

Let $\Tcal$ be a class of real valued and measurable functions on $\Ycal^{k+1}$ such that $\Tcal$ is separable for the supremum norm.
Also assume that there exist positive numbers $\sigma$ and $b$ such that for all $t \in \Tcal$, $\| t \|_\infty \leq b$ and $\Ebb^* t^2(Z_0) \leq \sigma^2$ and assume that $N(\Tcal,d_k,\delta)$ is finite for all $\delta > 0$.

Then for all measurable sets $A$ such that $\Pbb^*(A) > 0$:
\begin{multline*}
\Ebb^{*} \left(\underset{t \in \Tcal}{\sup} |\bar{\nu}_k(t)| \Big| A \right)
	\leq C^* (n_*+k+1) \Bigg[
		\frac{E}{n}
		+ \sigma \sqrt{\frac{1}{n} \log \left(\frac{1}{\Pbb^*(A)} \right)} \\
		+ \frac{b (\log n)^2}{n} \log \left(\frac{1}{\Pbb^*(A)} \right)
	\Bigg]
\end{multline*}
where
\begin{equation*}
E = \sqrt{n} \int_0^\sigma \sqrt{H(\Tcal, d_k, u) \wedge n} du + b (\log n)^2 H(\Tcal, d_k, \sigma).
\end{equation*}
\end{lemma}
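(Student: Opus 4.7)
The plan is to follow the proof strategy of Theorem~6.8 of \cite{Mas07} (chaining with bracketing combined with Bernstein-type deviation bounds), substituting the classical Bernstein inequality by the mixing Bernstein inequality established just above and carefully tracking the additional factors $(\log n)^2$ and $(n_*+k+1)$ produced by the mixing setting. The final conditional-expectation bound is obtained from a uniform tail bound by the usual trick $\Ebb^*[X \mid A] \leq \int_0^\infty \min(1, \Pbb^*(X \geq x)/\Pbb^*(A)) \, dx$.

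First I would fix a geometric scale $\sigma_j = \sigma\, 2^{-j}$ and, for each $j \geq 0$, pick a minimal bracketing covering $\Bcal_j$ of $\Tcal$ at radius $\sigma_j$ for $d_k$, so that $\log |\Bcal_j| \leq H(\Tcal, d_k, \sigma_j) =: H_j$. Using separability of $\Tcal$ for the supremum norm, every $t \in \Tcal$ admits a chain $(t_j)_{j \geq 0}$ with $t_j$ in a bracket of $\Bcal_j$ and $d_k(t, t_j) \leq \sigma_j$. I would stop the chain at the first index $j_\star$ for which $H_{j_\star} \geq n$ and control the residual $\bar{\nu}_k(t - t_{j_\star})$ deterministically via $\|t - t_{j_\star}\|_\infty \leq 2b$, which is where the ``$\wedge n$'' inside the entropy integral enters. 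Next, the increments $\Delta_j := t_{j+1} - t_j$ satisfy $\Ebb^* \Delta_j^2(Z_0) \leq 9 \sigma_j^2$ and $\|\Delta_j\|_\infty \leq 2b$, so the previous theorem applied to $\Delta_j$ combined with the Markov inequality gives
\begin{equation*}
\Pbb^*\left( |\bar{\nu}_k(\Delta_j)| \geq x \right) \leq 2 \exp\left(- \frac{n\, x^2 / 2}{C_\mix^2 (n_*+k+1)^2 \bigl[\,9\sigma_j^2 + 2 b x (\log n)^2 / n\bigr]} \right).
\end{equation*}

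The heart of the argument is then a union bound over the $|\Bcal_j|\cdot|\Bcal_{j+1}|$ possible pairs at each level $j$, with a per-level deviation $x_j$ of the form $C (n_*+k+1)[\sigma_j \sqrt{(H_j + u_j)/n} + b(\log n)^2 (H_j + u_j)/n]$ for parameters $u_j$ chosen to make $\sum_j e^{-u_j}$ finite. Summing the $x_j$'s and using $\sigma_j \sqrt{H_j/n} \lesssim \int_{\sigma_{j+1}}^{\sigma_j} \sqrt{H(\Tcal,d_k,u)/n}\, du$ produces the integral term $\sqrt{n}\int_0^\sigma \sqrt{H(\Tcal,d_k,u) \wedge n}\, du / n$; summing the Bernstein part gives the term $b (\log n)^2 H(\Tcal, d_k, \sigma)/n$ (dominated by the largest scale); and the free tail parameter produces the $\sigma\sqrt{u/n} + b(\log n)^2 u/n$ contribution. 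This yields, for every $u > 0$ and with probability at least $1 - e^{-u}$,
\begin{equation*}
\sup_{t \in \Tcal}|\bar{\nu}_k(t)| \leq C^{\star}(n_*+k+1) \left[\frac{E}{n} + \sigma \sqrt{\frac{u}{n}} + \frac{b (\log n)^2\, u}{n}\right].
\end{equation*}

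Finally, I would pass from this tail bound to the conditional expectation on $A$. Writing $X = \sup_{t \in \Tcal}|\bar{\nu}_k(t)|$ and using $\Pbb^*(X \geq x \mid A) \leq \min(1, \Pbb^*(X \geq x)/\Pbb^*(A))$, the change of variables $u = \log(1/\Pbb^*(A)) + v$ in the integral $\int_0^\infty \Pbb^*(X \geq x \mid A)\, dx$ replaces $u$ by $\log(1/\Pbb^*(A))$ plus an $O(1)$ contribution from integrating $e^{-v}$, giving exactly the announced bound. The main obstacle is the mixed sub-Gaussian/Bernstein regime of the mixing Bernstein inequality: the sup-norm term carries an extra $(\log n)^2$ factor that must not contaminate the sub-Gaussian term during chaining, which forces one to split $x_j$ into a ``variance'' piece and a ``bounded'' piece and bound them separately at every scale. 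Keeping the factor $(n_*+k+1)$ linear rather than squared throughout the summation, and checking that the entropy integral truncated at $n$ correctly absorbs the deterministic residual coming from the stopped chain, are the only additional technicalities.
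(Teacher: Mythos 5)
Your proposal is exactly the paper's route: the paper proves this lemma in one line by invoking the proof of Theorem~6.8 of \cite{Mas07} \emph{mutatis mutandis} with the mixing Bernstein inequality of the preceding theorem, which is precisely the chaining-with-bracketing argument (truncated at entropy level $n$, with the conditional-expectation conversion of the tail bound) that you sketch. The only point to watch is your parenthetical claim that the linear Bernstein contributions are ``dominated by the largest scale'': a naive union bound yields $\sum_j H_j$ rather than $H(\Tcal,d_k,\sigma)$, and it is Massart's adaptive truncation of the bracket widths at each level (which replaces $b$ by a scale-dependent threshold for $j\geq 1$) that produces the stated form of $E$ — but since you are following that proof, this is handled there.
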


By taking $\Tcal = \{ t^{(D)}_\theta \; | \theta \in \Bbf_\sigma\}$ and $b = 4D$, one gets the following lemma from Lemma 4.23 and Lemma 2.4 of \cite{Mas07}:
\begin{lemma}
\label{lemma_majorationW}
Assume that there exist a function $\varphi$ and constants $C$ and $\sigma_{K,M}$ such that $x \mapsto \frac{\varphi(x)}{x}$ is nonincreasing and
\begin{equation}
\label{majoration_E}
\forall \sigma \geq \sigma_{K,M} \qquad E \leq C \varphi(\sigma) \sqrt{n}.
\end{equation}
Then for all $x_{K,M} \geq \sigma_{K,M}$ and $z > 0$, with probability greater than $1 - e^{-z}$:
\begin{multline}
\label{majoration_Z_choix_x_a_faire}
W_{K,M} :=
\sup_{\theta \in S_{K,M}} \left| \frac{| \bar{\nu}_k(t_\theta^{(D)}) |}{\Ebb^*[t_\theta^{(D)}(Z_0)^2] + x_{K,M}^2} \right|
	\leq 4 C^* (n_*+k+1) \Bigg[
		C \frac{\varphi(x_{K,M})}{x_{K,M}^2 \sqrt{n}} \\
		+ \sqrt{\frac{z}{x_{K,M}^2 n}}
		+ 4D \frac{z (\log n)^2}{x_{K,M}^2 n}
	\Bigg].
\end{multline}
\end{lemma}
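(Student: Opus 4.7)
The plan is a classical peeling (slicing) argument combined with Lemma~\ref{lemma_inegalite_concentration} applied on a geometric sequence of shells. Fix a ratio $r > 1$ (for instance $r = \sqrt{2}$), set $\sigma_0 = x_{K,M}$ and $\sigma_j = r^j x_{K,M}$ for $j \geq 1$, and partition $S_{K,M}$ into the annuli $\Bbf_{\sigma_j} \setminus \Bbf_{\sigma_{j-1}}$ (with $\Bbf_{\sigma_{-1}} = \emptyset$). On the $j$-th shell the denominator $\Ebb^{*}[t_{\theta}^{(D)}(Z_0)^2] + x_{K,M}^2$ appearing in $W_{K,M}$ is bounded below by $x_{K,M}^2$ when $j=0$ and by $\sigma_{j-1}^2 = r^{-2}\sigma_j^2$ when $j \geq 1$. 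Hence it suffices to bound $\sup_{\theta \in \Bbf_{\sigma_j}} |\bar{\nu}_k(t_\theta^{(D)})|$ on each $\Bbf_{\sigma_j}$ and to divide by the appropriate lower bound.

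For a fixed $j$, I would apply Lemma~\ref{lemma_inegalite_concentration} to the class $\Tcal = \{ t_\theta^{(D)} : \theta \in \Bbf_{\sigma_j}\}$ with variance bound $\sigma_j^2$ and sup-norm bound $b = 2D + \log(1/\sigma_-)$. Since $\sigma_j \geq x_{K,M} \geq \sigma_{K,M}$, the hypothesis of the lemma gives $E \leq C \varphi(\sigma_j) \sqrt{n}$, and Lemma~2.4 of \cite{Mas07}\---which converts a uniform bound on the conditional expectation $\Ebb^{*}(\sup\,|\,A)$ over events $A$ with $\Pbb^{*}(A) \geq e^{-z}$ into a one-sided deviation inequality\---yields, for any $z_j > 0$, with probability at least $1 - e^{-z_j}$,
\[
\sup_{\theta \in \Bbf_{\sigma_j}} |\bar{\nu}_k(t_\theta^{(D)})|
    \leq C^{*}(n_*+k+1) \left[ \frac{C\varphi(\sigma_j)}{\sqrt{n}} + \sigma_j\sqrt{\frac{z_j}{n}} + b\,\frac{z_j (\log n)^2}{n} \right].
\]
Picking $z_j = z + 2(j+1)\log 2$ ensures $\sum_{j \geq 0} e^{-z_j} \leq e^{-z}$, so a union bound over $j$ preserves the announced confidence level.

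Finally I would divide each shell estimate by $\sigma_j^2 \vee x_{K,M}^2$ and sum over $j$. The monotonicity hypothesis that $\sigma \mapsto \varphi(\sigma)/\sigma$ is nonincreasing implies $\varphi(\sigma_j)/\sigma_j^2 \leq r^{-j}\varphi(x_{K,M})/x_{K,M}^2$, and the factors $1/\sigma_j$ and $1/\sigma_j^2$ in the remaining two terms decay like $r^{-j}/x_{K,M}$ and $r^{-2j}/x_{K,M}^2$. Because $\sqrt{z_j} = O(\sqrt{z} + \sqrt{j})$, the square-root growth is absorbed by the geometric decay, so all three series converge and reassemble into the three terms on the right-hand side of~\eqref{majoration_Z_choix_x_a_faire}. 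The main obstacle is tracking the numerical constants through the peeling so that the total multiplicative factor does not exceed $4$; this is exactly the packaging done by Lemma~4.23 of \cite{Mas07}, which I would invoke directly to avoid rewriting the bookkeeping.
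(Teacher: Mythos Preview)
Your proposal is correct and matches the paper's own argument: the paper simply states that the lemma follows from Lemma~4.23 and Lemma~2.4 of \cite{Mas07} applied with $\Tcal = \{ t_\theta^{(D)} : \theta \in \Bbf_\sigma \}$ and $b = 2D + \log\frac{1}{\sigma_-}$, and what you have written out is precisely the peeling mechanism that Lemma~4.23 of \cite{Mas07} packages. Your explicit description of the slicing and the geometric summation is a faithful unpacking of that reference, so there is no substantive difference.
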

The two remaining steps are the control of the bracketing entropy which will lead to equation~\eqref{majoration_E} (see Section~\ref{sec_control_bracketing}) and the choice of the parameters $x_{K,M}$ and $z$ (see Section~\ref{sec_choice_parameters}).

\subsection{Control of the bracketing entropy}
\label{sec_control_bracketing}

In this section, we show that for all $k \geq 2$ and $\epsilon > 0$,
\begin{multline*}
H(\epsilon) \leq 2 ( m_M K + K^2 - 1) \log \max \Bigg( 
		\frac{95 D e^{2D}  \left( \sqrt{2} \Csigma \log n \right)^{k+3/2} \sqrt{k K \Caux'}}{\epsilon}, \\
		14 \left( \sqrt{2} \Csigma \log n \right)^{k+1/2} \sqrt{k K \Caux'} \Bigg)
\end{multline*}
where $\Caux' = (\Caux e^D) \vee (K-1)$.

\subsubsection{Reduction of the set}

For all $\theta \in S_{K,M}$, let $\gbf_\theta = (g_{\theta,x})_{x \in [K]}$ where
\begin{equation*}
g_{\theta,x}(y_0^k) = \!
\begin{cases}
\displaystyle p_\theta(X_k = x, Y_k = y_k | Y_0^{k-1} = y_0^{k-1}) \text{ if } |L_{k,k}^*| \vee \sup_{\theta' \in \Sbf_n} | b_{\theta'}(y_k) | \leq D, \\
\displaystyle 0 \quad \text{otherwise.}
\end{cases}
\end{equation*}

In order to control the bracketing entropy of $\{t^{(D)}_\theta \; | \; \theta \in \Bbf_\sigma\}$, we control the bracketing entropy of the set $\Gcal := \{ \gbf_\theta \; | \; \theta \in S_{K,M} \}$ for the distance
\begin{multline*}
d_\Gcal(\gbf_{\theta_1}, \gbf_{\theta_2}) = \Ebb_{Y_0^{k-1}}^* \Bigg[  \sum_{x \in [K]} \int | g_{\theta_1,x}(Y_0^{k-1}, y_k) - g_{\theta_2,x}(Y_0^{k-1}, y_k) | \\
\times \one_{|L_{k,k}^*| \vee \sup_{\theta' \in \Sbf_n} | b_{\theta'}(y_k) | \leq D} d \lambda(y_k) \Bigg].
\end{multline*}

\begin{remark}
\label{remark_yk}
In the rest of Section~\ref{sec_control_bracketing}, we always assume that 
\begin{equation}
\label{eq_rk_condition_yk}
|L_{k,k}^*| \vee \sup_{\theta' \in \Sbf_n} | b_{\theta'}(y_k) | \leq D
\end{equation}
since if this is not the case, then ${t_\theta^{(D)}(y_k) = t_{\theta'}^{(D)}(y_k)} = 0$. This means that only the $y_k$ satisfying equation~\eqref{eq_rk_condition_yk} are relevant for the construction of the brackets.
\end{remark}

For all $\theta \in S_{K,M}$,
\begin{align*}
\sum_{x \in [K]} g_{\theta,x}
	&= \sum_{x,x' \in [K]} p_\theta(Y_k = y_k | X_k = x) \Qbf_\theta(x',x) p_\theta(X_{k-1} = x' | Y_0^{k-1} = y_0^{k-1}) \\
	&\in \Bigg[(\Csigma \log n)^{-1} K^{-1} \sum_{x \in [K]} p_\theta(Y_k = y_k | X_k = x), \\
		&\hspace{3cm} \Csigma (\log n) K^{-1} \sum_{x \in [K]} p_\theta(Y_k = y_k | X_k = x) \Bigg] \\
	&= \left[(\Csigma \log n)^{-1} e^{b_\theta(y_k)}, \Csigma (\log n) e^{b_\theta(y_k)} \right],
\end{align*}
so that for all $\theta \in S_{K,M}$,
\begin{equation}
\label{eq_encadrement_gtheta}
(\Csigma (\log n) e^{D})^{-1} \leq \sum_{x \in [K]} g_{\theta,x} \leq \Csigma (\log n) e^{D}.
\end{equation}

Let $[a,b]$ be a bracket of size $\epsilon$ for $\Gcal$ with the distance $d_\Gcal$ such that
\begin{equation}
\label{eq_bornitude_crochet}
(2 \Csigma (\log n) e^{D})^{-1} \leq \sum_x a_x \leq \sum_x b_x \leq 2 \Csigma (\log n) e^{D}.
\end{equation}

Then
\begin{align*}
\left( \log \sum_x a_x - \log \sum_x b_x \right)^2
	&\leq 2 \log\left( 2\Csigma (\log n) e^D \right) \left| \log \sum_x a_x - \log \sum_x b_x \right| \\
	&\leq 8D \times 2 \Csigma (\log n) e^{D} \sum_x |a_x - b_x|
\end{align*}
when $n \geq e^2$ using that $|\log a - \log b| \leq |a-b|/(a \wedge b)$.
Therefore,
\begin{align*}
d_k & \left(\log \sum_x a_x, \log \sum_x b_x \right)^2 \\
	&= \Ebb^*_{Y_0^{k-1}} \left[ \int \left(\log \sum_x a_x - \log \sum_x b_x \right)^2(Y_0^{k-1}, y_k) p^*(Y_k = y_k | Y_0^{k-1}) \lambda(dy_k) \right] \\
	&\leq 16 D \Csigma (\log n) e^{D} \Ebb^*_{Y_0^{k-1}} \left[ \int \sum_x |a_x - b_x|(Y_0^{k-1}, y_k) \exp(L^*_{k,k}) \lambda(dy_k) \right] \\
	&\leq 16 D \Csigma (\log n) e^{2D} d_\Gcal(a,b),
\end{align*}
so that
\begin{equation}
\label{eq_lien_entropie_Bsigma_et_G}
N(\{t^{(D)}_\theta \; | \; \theta \in \Bbf_\sigma\}, d_k, \epsilon) \leq \bar{N} \left(\Gcal, d_\Gcal, \left( \frac{\epsilon}{16 D \Csigma (\log n) e^{2D}} \right)^2 \right)
\end{equation}
where $\bar{N}$ is the minimal cardinality of a bracket covering of $\Gcal$ such that all brackets $[a,b]$ satisfy equation~\eqref{eq_bornitude_crochet}.

\subsubsection{Decomposition into simple sets}

The aim of this section is to prove the following lemma.
\begin{lemma}
\label{lemma_lien_entropie_Gcal_et_ens_simples}
Assume $k \geq 2$ and let $\displaystyle \epsilon \in \left(0, \frac{70}{168} \right)$.  Then
\begin{multline*}
\bar{N} \left(\Gcal, d_\Gcal, \epsilon \right)
	\leq N\left(\{ \pi_\theta \}_{\theta \in S_{K,M}}, d_\infty, \frac{\epsilon}{70 k \left( \sqrt{2} \Csigma \log n \right)^{2k+1} K} \right) \\
		\times N\left(\{ \Qbf_\theta \}_{\theta \in S_{K,M}}, d_\infty, \frac{\epsilon}{70 k \left( \sqrt{2} \Csigma \log n \right)^{2k+1} K} \right) \\
		\times N\left(\{ \gamma_\theta \}_{\theta \in S_{K,M}}, d_\infty, \frac{\epsilon \, e^{-D}}{70 k \left( \sqrt{2} \Csigma \log n \right)^{2k+1} K} \right)
\end{multline*}
where $d_\infty$ is the distance of the supremum norm and where $\gamma_\theta$ denotes the function $(x,y) \longmapsto \gamma_\theta(y | x)$.
\end{lemma}

\noindent
Let:
\begin{itemize}
\item $[a,b]$ be a bracket of $\{ \pi_\theta \}_{\theta \in S_{K,M}}$ of size $\epsilon$ for the supremum norm;
\item $[p,q]$ be a bracket of $\{ \Qbf_\theta \}_{\theta \in S_{K,M}}$ of size $\epsilon$ pour the supremum norm;
\item $[u,v]$ be a bracket of $\{ \gamma_\theta \}_{\theta \in S_{K,M}}$ of size $\epsilon e^{-D}$ for the supremum norm.
\end{itemize}
Without loss of generality, we assume $(\Csigma \log n)^{-1} K^{-1} \leq a(x) \leq b(x) \leq \Csigma (\log n) K^{-1}$ and $(\Csigma \log n)^{-1} K^{-1} \leq p(x,x') \leq q(x,x') \leq \Csigma (\log n) K^{-1}$ for all $x, x' \in [K]$ since all elements of $\{ \pi_\theta \}_{\theta \in S_{K,M}}$ and $\{ \Qbf_\theta \}_{\theta \in S_{K,M}}$ satisfy these inequalities. We also assume that the brackets aren't empty: there exists $\theta \in S_{K,M}$ such that $\pi_\theta \in [a,b]$, $\Qbf_\theta \in [p,q]$ and $\gamma_\theta \in [u,v]$. Under this assumption, for all $y \in \Ycal$, 
\begin{equation}
\label{eq_encadrement_bracketuv}
Ke^{-D} (1 - \epsilon) \leq \sum_x u(y|x) \leq \sum_x v(y|x) \leq K(e^D + \epsilon e^{-D}).
\end{equation}

Using the approach of Appendix A of \cite{dCGLLC15}, one can write $g_{\theta,x}$ as the following product of matrices
\begin{equation*}
g_{\theta, x}(y_0^k) = \left( \mu_{0|k-1}^\theta F_{1|k-1}^\theta \dots F_{k-1|k-1}^\theta \Qbf_\theta \right)_x \gamma_\theta(y_k | x) 
\end{equation*}
where
\begin{align*}
&\beta_{i|k}(x_i) = \sum_{x_{i+1}^k \in [K]^{k-i}}
	\Qbf_\theta(x_i, x_{i+1}) \gamma_\theta(y_{i+1} | x_{i+1}) \dots
	\Qbf_\theta(x_{k-1}, x_k) \gamma_\theta(y_k | x_k),
\end{align*}
for $0 \leq i \leq k-1$ and $\beta_{k|k}(x) = 1$ for all $x \in [K]$,
\begin{align*}
&\mu_{0|k}^\theta(x) = \frac{\pi_\theta(x) \beta_{0|k}(x) \gamma_\theta(y_0 | x)}{\sum_{x' \in [K]} \pi_\theta(x') \beta_{0|k}(x') \gamma_\theta(y_0 | x')} \\
\text{and} \quad & F_{i|k}^\theta(x_{i-1}, x_i) =
	\frac{\beta_{i|k}(x_i) \Qbf_\theta(x_{i-1}, x_i) \gamma_\theta(y_i | x_i)}
		{\sum_{x \in [K]} \beta_{i|k}(x) \Qbf_\theta(x_{i-1}, x) \gamma_\theta(y_i | x)}.
\end{align*}

To clarify the role of these quantities, observe that
\begin{align*}
&\beta_{i|k}(x_i) = p_\theta(Y_{i+1}^k | X_i = x_i), \\
&\mu_{0|k}^\theta(x) = \Pbb_\theta(X_0 = x | Y_0^k), \\
&F_{i|k}^\theta(x_{i-1}, x_i) = \Pbb_\theta(X_i = x_i | Y_i^k, X_{i-1} = x_{i-1}),
\end{align*}
so that
\begin{equation*}
\left( \mu_{0|k}^\theta F_{1|k}^\theta \dots F_{k|k}^\theta \right)_x
	= \Pbb_\theta(X_k = x | Y_0^k).
\end{equation*}

Now, let
\begin{equation*}
\begin{cases}
\displaystyle \alpha_{i|k}(x_i) = \sum_{x_{i+1}^k \in [K]^{k-i}}
	p(x_i, x_{i+1}) u(y_{i+1} | x_{i+1}) \dots
	p(x_{k-1}, x_k) u(y_k | x_k) \\ \\
\displaystyle \delta_{i|k}(x_i) = \sum_{x_{i+1}^k \in [K]^{k-i}}
	q(x_i, x_{i+1}) v(y_{i+1} | x_{i+1}) \dots
	q(x_{k-1}, x_k) v(y_k | x_k)
\end{cases}
\end{equation*}
for $0 \leq i \leq k-1$ and $\alpha_{k|k}(x) = \delta_{k|k}(x) = 1$ for all $x \in [K]$,
\begin{equation*}
\begin{cases}
\displaystyle \nu(x) = \frac{a(x) \alpha_{0|k}(x) u(y_0|x)}{\sum_{x' \in [K]} b(x') \delta_{0|k}(x') v(y_0|x')} \\ \\
\displaystyle \omega(x) = \frac{b(x) \delta_{0|k}(x) v(y_0|x)}{\sum_{x' \in [K]} a(x') \alpha_{0|k}(x') u(y_0|x')}
\end{cases}
,
\end{equation*}
and
\begin{equation*}
\begin{cases}
\displaystyle f_{i|k}(x_{i-1}, x_i) =
	\frac{\alpha_{i|k}(x_i) p(x_{i-1}, x_i) u(y_i | x_i)}
		{\sum_{x \in [K]} \delta_{i|k}(x) q(x_{i-1}, x) v(y_i | x)} \\ \\
\displaystyle g_{i|k}(x_{i-1}, x_i) =
	\frac{\delta_{i|k}(x_i) q(x_{i-1}, x_i) v(y_i | x_i)}
		{\sum_{x \in [K]} \alpha_{i|k}(x) p(x_{i-1}, x) u(y_i | x)}
\end{cases}.
\end{equation*}
$[\nu, \omega]$ and $[f_{i|k},g_{i|k}]$ are brackets of $\{ \mu_{0|k}^\theta \}_{\theta \in S_{K,M}}$ and $\{ F_{i|k}^\theta \}_{\theta \in S_{K,M}}$ for all $i \in \{1, \dots, k \}$. Moreover, if one has a bracket covering of the sets $\{ \pi_\theta \}_{\theta \in S_{K,M}}$, $\{ \Qbf_\theta \}_{\theta \in S_{K,M}}$ and $\{ \gamma_\theta \}_{\theta \in S_{K,M}}$, then this construction gives a bracket covering of $\{ \mu_{0|k}^\theta \}_{\theta \in S_{K,M}}$ and $\{ F_{i|k}^\theta \}_{\theta \in S_{K,M}}$ for all $i \in \{1, \dots, k \}$.

The next step of the proof is to control the size of these new brackets.

\begin{lemma}
\label{lemma_taille_crochet_beta}
Assume $\epsilon \leq \frac{1}{2}$, then
\begin{equation*}
\sup_{0 \leq i \leq k} \frac{\sum_{x \in [K]} |\alpha_{i|k}(x) u(y_i | x) - \delta_{i|k}(x) v(y_i | x)|}{\sum_{x \in [K]} \alpha_{i|k}(x) u(y_i | x)}
	\leq 4 \left( \sqrt{2} \Csigma \log n \right)^{2k+1} K \epsilon.
\end{equation*}
\end{lemma}
\begin{proof}
Using minimalist notations,
\begin{align*}
\sum_{x \in [K]} &| \alpha_{i|k}(x) u(y_i | x) - \delta_{i|k}(x) v(y_i | x) | \\
	&\leq \sum_{j=i+1}^k \sum_{x_i^k \in [K]^{k-i+1}} u_i p_i^{i+1} u_{i+1} \dots u_{j-1} | p_{j-1}^j - q_{j-1}^j | v_j \dots q_{k-1}^k v_k \\
	& \quad + \sum_{j=i}^k \sum_{x_i^k \in [K]^{k-i+1}} u_i p_i^{i+1} u_{i+1} \dots p_{j-1}^j | u_j - v_j | q_j^{j+1} \dots q_{k-1}^k v_k.
\end{align*}
Then, note that for all $j \in \{i+1, \dots, k\}$,
\begin{align*}
\sum_{x_i^k \in [K]^{k-i+1}} & u_i p_i^{i+1} \dots p_{j-2}^{j-1} u_{j-1} | p_{j-1}^j - q_{j-1}^j | v_j q_j^{j+1} \dots q_{k-1}^k v_k \\
	&\leq \epsilon (\Csigma (\log n) K^{-1})^{k-j} \sum_{x_i^{j-1} \in [K]^{j-i}} u_i p_i^{i+1} \dots p_{j-2}^{j-1}  u_{j-1} \\
		&\hspace{3cm} \times  \sum_{x_j \in [K]} (u_j + \epsilon e^{-D}) \dots \sum_{x_k \in [K]} (u_k + \epsilon e^{-D})
\end{align*}
and for all $j \in \{i, \dots, k\}$ (with a special case for $j=i$),
\begin{multline*}
\sum_{x \in [K]} \alpha_{i|k}(x) u(y_i | x)
	= \sum_{x_i^k \in [K]^{k-i+1}} u_i p_i^{i+1} \dots p_{j-2}^{j-1} u_{j-1} p_{j-1}^j u_j p_j^{j+1} \dots p_{k-1}^k u_k \\
	\geq (\Csigma (\log n) K)^{-(k-j+1)} \!\!\!\! \sum_{x_i^{j-1} \in [K]^{j-i}} \!\!\!\! u_i p_i^{i+1} \dots p_{j-2}^{j-1} u_{j-1} \sum_{x_j \in [K]} u_j \dots \sum_{x_k \in [K]} u_k.
\end{multline*}
so that
\begin{align*}
&\frac{
	\sum_{x_i^k \in [K]^{k-i+1}} u_i p_i^{i+1} \dots u_{j-1} | p_{j-1}^j - q_{j-1}^j | v_j \dots q_{k-1}^k v_k
	}{
		\sum_{x_i^k \in [K]^{k-i+1}} u_i p_i^{i+1} \dots u_{j-1} p_{j-1}^j u_j \dots p_{k-1}^k u_k
	} \\
	&\hspace{3cm} \leq
		\epsilon K (\Csigma \log n)^{2(k-j)+1} \prod_{\ell = j}^k \frac{K\epsilon e^{-D} + \sum_{x_\ell} u_\ell}{\sum_{x_\ell} u_\ell} \\
	&\hspace{3cm} \leq
		\epsilon K (\Csigma \log n)^{2(k-j)+1} \prod_{\ell = j}^k \left(1 + \frac{K\epsilon e^{-D}}{Ke^{-D}(1-\epsilon)} \right) \\
	&\hspace{3cm} \leq \epsilon K \frac{(\Csigma \log n)^{2(k-j)+1}}{(1 - \epsilon)^{k-j+1}}.
\end{align*}
Likewise, for all $j \in \{i, \dots, k\}$,
\begin{equation*}
\frac{
	\sum_{x_i^k \in [K]^{k-i+1}} u_i p_i^{i+1} \dots p_{j-1}^j | u_j - v_j | q_j^{j+1} \dots q_{k-1}^k v_k
	}{
		\sum_{x_i^k \in [K]^{k-i+1}} u_i p_i^{i+1} \dots u_{j-1} p_{j-1}^j u_j \dots p_{k-1}^k u_k
	}
	\leq \epsilon K \frac{(\Csigma \log n)^{2(k-j)+1}}{(1 - \epsilon)^{k-j+1}}.
\end{equation*}
Therefore, when $\epsilon \leq 1/2$,
\begin{align*}
\frac{\sum_{x \in [K]} |\alpha_{i|k}(x) u(y_i | x) - \delta_{i|k}(x) v(y_i | x)|}{\sum_{x \in [K]} \alpha_{i|k}(x) u(y_i | x)}
	\leq{} & 2 \frac{\epsilon K}{\Csigma \log n} \sum_{j=i}^k \left( 2 (\Csigma \log n)^2 \right)^{k-j+1} \\
	\leq{} & 4 \epsilon K \Csigma (\log n) \frac{\left( 2 (\Csigma \log n)^2 \right)^{k-i} - 1}{2 (\Csigma \log n)^2 - 1} \\
	\leq{} & 4 \epsilon K \left( \sqrt{2} \Csigma \log n \right)^{2(k-i)+1}
\end{align*}
since $n \geq e^2$, which gives the desired result.
\end{proof}

\begin{lemma}
\label{lemma_taille_crochet_mu}
Assume $\epsilon \leq \frac{1}{2}$, then
\begin{equation*}
\| \nu - \omega \|_1
	\leq 6 \left( \sqrt{2} \Csigma \log n \right)^{2k+3} K \epsilon
\end{equation*}
and
\begin{equation}
\label{eq_taille_crochet_fi}
\sup_{0 \leq i \leq k}  \sup_{x \in [K]} \| f_{i|k}(x, \cdot) - g_{i|k}(x, \cdot) \|_1
	\leq 6 \left( \sqrt{2} \Csigma \log n \right)^{2k+3} K \epsilon.
\end{equation}
\end{lemma}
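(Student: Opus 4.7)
The plan is to observe that since $a \leq b$, $p \leq q$, $u \leq v$ (and consequently $\alpha_{i|k} \leq \delta_{i|k}$), we have $\nu \leq \omega$ pointwise and $f_{i|k} \leq g_{i|k}$ pointwise. So the $L^1$ norms reduce to the sums $\sum_x (\omega(x)-\nu(x))$ and $\sum_{x_i}(g_{i|k}(x_{i-1},x_i)-f_{i|k}(x_{i-1},x_i))$.

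Set $A = \sum_{x'} a(x')\alpha_{0|k}(x')$ and $B = \sum_{x'} b(x')\delta_{0|k}(x')$. Summing the ratios defining $\nu$ and $\omega$ gives $\sum_x \nu(x) = A/B$ and $\sum_x \omega(x) = B/A$, whence
\begin{equation*}
\|\omega - \nu\|_1 \,=\, \frac{B}{A}-\frac{A}{B} \,=\, \frac{(B-A)(B+A)}{AB} \,\leq\, \frac{2(B-A)}{A}.
\end{equation*}
The key is then to decompose $B-A = \sum_{x'} b(x')(\delta_{0|k}-\alpha_{0|k})(x') + \sum_{x'}(b(x')-a(x'))\alpha_{0|k}(x')$, so that $b \leq 1$ can be used on the first summand and $|b-a| \leq \epsilon$ on the second. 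The first sum is bounded via Lemma~\ref{lemma_taille_crochet_beta} by $4(2/\sigma_-)^k\epsilon\sum_{x'}\alpha_{0|k}(x')$, and the second by $\epsilon\sum_{x'}\alpha_{0|k}(x')$. Since $A \geq \sigma_-\sum_{x'}\alpha_{0|k}(x')$, we obtain
\begin{equation*}
\frac{B-A}{A} \leq \frac{\epsilon}{\sigma_-}\left[4\left(\frac{2}{\sigma_-}\right)^k + 1\right] = 2\left(\frac{2}{\sigma_-}\right)^{k+1}\epsilon + \frac{1}{2}\cdot\frac{2}{\sigma_-}\epsilon,
\end{equation*}
and since $\sigma_- \leq 1$ implies $2/\sigma_- \leq (2/\sigma_-)^{k+1}$, both terms combine into $(5/2)(2/\sigma_-)^{k+1}\epsilon$, giving the stated bound on $\|\omega - \nu\|_1$.

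For $f_{i|k}$ and $g_{i|k}$, fix $x_{i-1}$ and set $A'(x_{i-1})=\sum_x \alpha_{i|k}(x)p(x_{i-1},x)u(y_i|x)$, $B'(x_{i-1})=\sum_x \delta_{i|k}(x)q(x_{i-1},x)v(y_i|x)$; the same computation yields $\|g_{i|k}(x_{i-1},\cdot)-f_{i|k}(x_{i-1},\cdot)\|_1 \leq 2(B'-A')/A'$. This time I decompose
\begin{equation*}
\delta_{i|k}(x)q(x_{i-1},x)v(y_i|x) - \alpha_{i|k}(x)p(x_{i-1},x)u(y_i|x) = q(x_{i-1},x)(\delta v - \alpha u)(x) + \alpha_{i|k}(x)u(y_i|x)(q-p)(x_{i-1},x),
\end{equation*}
use $q \leq 1$ and $|q-p|\leq \epsilon$, apply the second inequality of Lemma~\ref{lemma_taille_crochet_beta} to $\sum_x|\delta v - \alpha u|$, and bound $A'(x_{i-1}) \geq \sigma_-\sum_x \alpha_{i|k}(x)u(y_i|x)$. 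The same consolidation of powers of $2/\sigma_-$ (using $k-i+2 \geq 1$) yields $(B'-A')/A' \leq (5/2)(2/\sigma_-)^{k-i+2}\epsilon$ and then the announced bound.

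The proof is essentially mechanical once the right decomposition is chosen. The only point requiring a little care is the second, sharper decomposition: if one instead wrote $B-A = \sum(b-a)\delta + \sum a(\delta-\alpha)$ and used the trivial bound $\sum \delta \leq \sum\alpha + \sum|\delta-\alpha|$ to get back to $\sum\alpha$, the constants would come out as $6$ instead of $5$; the decomposition with $b$ and $\alpha$ (rather than $\delta$ and $a$) avoids this loss because both leading factors are bounded by $1$ and no inflation of $\sum\alpha$ is needed. The assumption $\epsilon K \leq 1/2$ is used implicitly through Lemma~\ref{lemma_taille_crochet_beta}, and no further smallness of $\epsilon$ is required for the bound $2(B-A)/A$, which holds regardless of the size of the ratio $B/A$.
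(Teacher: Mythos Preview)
Your proof is correct and follows essentially the same approach as the paper: both arrive at the bound $2(B-A)/A$ (with $A=\sum a\alpha$, $B=\sum b\delta$), decompose $B-A$ as $\sum b(\delta-\alpha)+\sum(b-a)\alpha$, invoke Lemma~\ref{lemma_taille_crochet_beta}, and use $A\geq\sigma_-\sum\alpha$. The only cosmetic difference is that you exploit the pointwise ordering $\nu\leq\omega$ to write $\|\omega-\nu\|_1=B/A-A/B\leq 2(B-A)/A$ directly, whereas the paper obtains the same factor $2$ via a triangle-inequality split $\frac{a\alpha}{\sum b\delta}-\frac{b\delta}{\sum a\alpha}=\big(\frac{a\alpha}{\sum b\delta}-\frac{b\delta}{\sum b\delta}\big)+\big(\frac{b\delta}{\sum b\delta}-\frac{b\delta}{\sum a\alpha}\big)$.
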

\begin{proof}
With minimalist notations,
\begin{align*}
\sum |\nu - \omega|
	&= \sum \left| \frac{a \alpha u}{\sum b \delta v} - \frac{b \delta v}{\sum a \alpha u} \right| \\
	&\leq \frac{\sum | a \alpha u - b \delta v |}{\sum b \delta v}
		+ \sum |b \delta v| \left| \frac{1}{\sum a \alpha u} - \frac{1}{\sum b \delta v} \right| \\
	&\leq \frac{\sum | a \alpha u - b \delta v |}{\sum b \delta v}
		+ \frac{\sum | a \alpha u - b \delta v |}{\sum a \alpha u} \\
	&\leq 2 \Csigma (\log n) K \frac{\sum | a \alpha u - b \delta v |}{\sum \alpha u}
\end{align*}
using $(\Csigma (\log n) K)^{-1} \leq a \leq b \leq \Csigma (\log n) K^{-1}$, $0 \leq \alpha \leq \delta$ and $0 \leq u \leq v$. Thus,
\begin{align*}
\sum |\nu - \omega|
	&\leq 2 \Csigma (\log n) K \left(
			\frac{\sum b | \alpha u - \delta v |}{\sum \alpha u}
			+ \frac{\sum | a - b | \alpha u}{\sum \alpha u}
		\right)	\\
	&\leq 2 \Csigma (\log n) K \left( 
			\Csigma (\log n) K^{-1} \frac{\sum | \alpha u - \delta v |}{\sum \alpha u}
			+ \epsilon
		\right) \\
	&\leq 2 \Csigma (\log n) K \left( 
			\Csigma (\log n) 4 \left( \sqrt{2} \Csigma \log n \right)^{2k+1} \epsilon
			+ \epsilon
		\right) \  \text{(Lemma~\ref{lemma_taille_crochet_beta})} \\
	&\leq 6 \left( \sqrt{2} \Csigma \log n \right)^{2k+3} K \epsilon.
\end{align*}
The control of $\sum_{x' \in [K]} |g_{i|k} - f_{i|k}|(x,x')$ is the same after replacing $a$ and $b$ by $p$ and $q$.
\end{proof}

Write $\eta = 6 \left( \sqrt{2} \Csigma \log n \right)^{2k+3} K \epsilon$.
Equation (\ref{eq_taille_crochet_fi}) implies that as soon as $\eta < 1$, it is possible to enlarge the bracket $[f_{i|k}, g_{i|k}]$ into a bracket $[f'_{i|k}, g'_{i|k}]$ of size smaller than $3 \eta$ for the norm of Lemma~\ref{lemma_taille_crochet_mu} such that $f'_{i|k} / (1 - \eta)$ and $g'_{i|k} / (1 + \eta)$ are transition matrices.


Let
\begin{equation*}
\begin{cases}
\displaystyle A_x(y_0^k) = \left( \nu f'_{1|k-1} \dots f'_{k-1|k-1} p \right)_x u(y_k | x) \\ \\
\displaystyle B_x(y_0^k) = \left( \omega g'_{1|k-1} \dots g'_{k-1|k-1} q \right)_x v(y_k | x)
\end{cases}
.
\end{equation*}
$[A,B]$ is a bracket of $\Gcal$, and this construction gives a bracket covering of $\Gcal$.

\begin{lemma}
\label{lemma_taille_crochet_justeavantA}
Assume $\epsilon \leq \frac{1}{12 k \left( \sqrt{2} \Csigma \log n \right)^{2k+3} K}$. Then for all $y_0^k$,
\begin{equation*}
\sum_{x \in [K]} | (\nu f'_{1|k} \dots f'_{k|k})_x - (\omega g'_{1|k} \dots g'_{k|k})_x |
	\leq 7 k \eta = 42 k \left( \sqrt{2} \Csigma \log n \right)^{2k+3} K \epsilon
\end{equation*}
and
\begin{equation*}
\sum_{x \in [K]} | (\nu f'_{1|k} \dots f'_{k|k} p)_x - (\omega g'_{1|k} \dots g'_{k|k} q)_x |
	\leq 64 k \left( \sqrt{2} \Csigma \log n \right)^{2k+3} K \epsilon.
\end{equation*}
\end{lemma}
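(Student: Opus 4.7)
The plan is to control both quantities by telescoping the products of matrices and then using the row-sum normalization of $f'_{i|k}$ and $g'_{i|k}$ together with the bracket sizes already proved in Lemma~\ref{lemma_taille_crochet_mu}. The key ingredients that I will use are: (i) $\nu\le\mu_{0|k}^\theta\le\omega$ coefficient-wise with $\|\mu_{0|k}^\theta\|_1=1$, which yields $\|\nu\|_1\le 1$ and $\|\omega\|_1\le 1+\eta$; (ii) by construction $f'_{i|k}/(1-\eta)$ and $g'_{i|k}/(1+\eta)$ are transition matrices, so for any row $x$, $\sum_{x'}f'_{i|k}(x,x')=1-\eta$ and $\sum_{x'}g'_{i|k}(x,x')=1+\eta$; (iii) the enlarged bracket satisfies $\sum_{x'}|f'_{i|k}(x,x')-g'_{i|k}(x,x')|\le 3\eta$ uniformly in $x$; (iv) the assumption $\epsilon\le\frac{1}{10k}(\sigma_-/2)^{k+1}$ gives $k\eta\le 1/2$, so that $(1\pm\eta)^{k}\le e^{1/2}<1.65$.

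For the first inequality, I would write the telescoping decomposition
\begin{equation*}
\nu f'_{1|k}\cdots f'_{k|k} - \omega g'_{1|k}\cdots g'_{k|k}
= (\nu-\omega)\, f'_{1|k}\cdots f'_{k|k} + \sum_{j=1}^{k} \omega\, f'_{1|k}\cdots f'_{j-1|k}(f'_{j|k}-g'_{j|k})g'_{j+1|k}\cdots g'_{k|k}.
\end{equation*}
Summing over $x\in[K]$ and swapping sums, the first term is bounded by $\|\nu-\omega\|_1 (1-\eta)^{k}\le\eta$ using (ii). For each $j$, the bound $\sum_{x'}|f'_{j|k}(x,x')-g'_{j|k}(x,x')|\le 3\eta$ together with the row sums gives a contribution bounded by $\|\omega\|_1 (1-\eta)^{j-1}\cdot 3\eta\cdot(1+\eta)^{k-j}\le 3\eta(1+\eta)^{k-1}$. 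Summing over $j$ yields a total bound of $\eta+3k\eta(1+\eta)^{k-1}\le k\eta(1+3\cdot 1.65)\le 7k\eta$, since $\eta\le k\eta$ and $(1+\eta)^{k-1}\le e^{1/2}$.

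For the second inequality, I decompose
\begin{equation*}
\nu f'_{1|k}\cdots f'_{k|k}\,p - \omega g'_{1|k}\cdots g'_{k|k}\,q
= \bigl(\nu f'_{1|k}\cdots f'_{k|k} - \omega g'_{1|k}\cdots g'_{k|k}\bigr)\,p
+ \omega g'_{1|k}\cdots g'_{k|k}\,(p-q).
\end{equation*}
The first piece is controlled using the previous bound and the fact that $\sum_{x}p(x_k,x)\le 1$ (since $p\le Q_\theta$ coefficient-wise), giving at most $7k\eta$. For the second piece, using $\|\omega g'_{1|k}\cdots g'_{k|k}\|_1\le(1+\eta)^{k+1}\le e$ and $\sum_{x}|p(x_k,x)-q(x_k,x)|\le K\epsilon$, I get a bound of $eK\epsilon$. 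Since $K\le\tfrac{1}{2\sigma_-}$ and $\epsilon=\tfrac{\eta}{5}(\sigma_-/2)^{k+1}$, this last quantity is at most $\eta/20\le k\eta/20$, and the total is bounded by $(7+e/20)k\eta\le 53k\eta/5=53k(2/\sigma_-)^{k+1}\epsilon$.

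The only delicate point is the constant bookkeeping, which is handled uniformly by $k\eta\le 1/2\Rightarrow(1\pm\eta)^{k}\le e^{1/2}$; everything else is a mechanical telescoping argument of the kind already used in the proofs of Lemmas~\ref{lemma_taille_crochet_beta} and~\ref{lemma_taille_crochet_mu}.
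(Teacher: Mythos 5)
Your proof is correct and follows essentially the same route as the paper: the same telescoping decomposition of the matrix products, the row-sum normalization of $f'_{i|k}/(1-\eta)$ and $g'_{i|k}/(1+\eta)$, the size-$3\eta$ bound on the enlarged brackets, and $\|\nu-\omega\|_1\leq\eta$ from Lemma~\ref{lemma_taille_crochet_mu}; the only differences are cosmetic (which factors carry $f'$ versus $g'$ in the telescoping, and bounding each geometric term by its maximum rather than summing the series exactly). One harmless slip: since $\|\omega\|_1\leq 1+\eta$, the $j$-th term is bounded by $3\eta(1+\eta)^{k}$ rather than $3\eta(1+\eta)^{k-1}$, but $k\eta\leq 1/2$ still gives $\eta+3k\eta e^{1/2}\leq 7k\eta$, so the stated constants go through.
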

\begin{proof}
First,
\begin{multline*}
\sum_{x \in [K]} | (\nu f'_{1|k} \dots f'_{k|k})_x - (\omega g'_{1|k} \dots g'_{k|k})_x |
	\leq \sum_{x \in [K]} | ((\nu - \omega) f'_{1|k} \dots f'_{k|k})_x | \\
		+ \sum_{j=1}^k \sum_{x \in [K]} | (\omega g'_{1|k} \dots g'_{j-1|k} (g'_{j|k} - f'_{j|k}) f'_{j+1|k} \dots f'_{k|k})_x |.
\end{multline*}
Then, since $f'_{i|k} / (1 - \eta)$ and $g'_{i|k} / (1 + \eta)$ are transition matrices (and thus are 1-Lipschitz linear operators of $\Lbf^1([K])$),
\begin{multline*}
\| \nu f'_{1|k} \dots f'_{k|k} - \omega g'_{1|k} \dots g'_{k|k} \|_1
	\leq \| \omega - \nu \|_1 (1-\eta)^k \\
		+ \sum_{j=1}^k \| \omega \|_1 (1 + \eta)^{j-1} \left( \sup_{1 \leq i \leq k} \sup_{x \in [K]} \| f'_{i|k}(x, \cdot) - g'_{i|k}(x, \cdot) \|_1 \right) (1 - \eta)^{k-j}.
\end{multline*}
By Lemma~\ref{lemma_taille_crochet_mu}, $\| \omega \|_1 \leq 1 + \eta$ (since the bracket $[\nu, \omega]$ contains a probability distribution $\mu^\theta_{0|k}$ for some $\theta \in S_{K,M}$) and $\sup_{1 \leq i \leq k} \sup_{x \in [K]} \| f'_{i|k}(x, \cdot) - g'_{i|k}(x, \cdot) \|_1 \leq 3\eta$, so that
\begin{align*}
\| \nu f'_{1|k} \dots f'_{k|k} - \omega g'_{1|k} \dots g'_{k|k} \|_1
	\leq{} & \eta
		+ (1 + \eta) \sum_{j=1}^k (1 + \eta)^{j-1} 3 \eta \\
	\leq{} & \eta \left( 1 + 3 (1 + \eta) \sum_{j=0}^{k-1} (1 + \eta)^{j} \right) \\
	\leq{} & \eta \left( 1 + 3 (1 + \eta) \frac{(1 + \eta)^k - 1}{\eta} \right) \\
	\leq{} & \eta + 3 (1+\eta) (e^{k \eta} - 1).
\end{align*}
For all $x \in [0, \frac{1}{2}]$, $3 (1+x) (e^{x} - 1) \leq 6x$. Since $k\eta \leq \frac{1}{2}$ by the assumption on $\epsilon$,
\begin{align*}
\| \nu f'_{1|k} \dots f'_{k|k} - \omega g'_{1|k} \dots g'_{k|k} \|_1
	\leq{} & \eta + 6 k \eta \leq 7 k \eta.
\end{align*}

For the second part, note that
\begin{align*}
\sum_{x \in [K]} | (\nu f'_{1|k}   &   \dots f'_{k|k} p)_x - (\omega g'_{1|k} \dots g'_{k|k} q)_x | \\
	\leq{} & \sum_x \sum_{x'} | (\nu f'_{1|k} \dots f'_{k|k})_{x'} p_{x',x} - (\omega g'_{1|k} \dots g'_{k|k})_{x'} q_{x',x} | \\
	\leq{} & \sum_x \sum_{x'} | (\nu f'_{1|k} \dots f'_{k|k})_{x'} - (\omega g'_{1|k} \dots g'_{k|k})_{x'} | q_{x',x} \\
		&+ \sum_x \sum_{x'} (\nu f'_{1|k} \dots f'_{k|k})_{x'} | p_{x',x} - q_{x',x} |.
\end{align*}
Since $[p,q]$ is a non-empty bracket of $\{\Qbf_\theta\}_{\theta \in S_{K,M}}$, $\sum_x q_{x',x} \leq 1 + K\epsilon$ for all $x'$ and since $\nu f'_{1|k} \dots f'_{k|k}$ is the lower bound of a non empty bracket of $\{ p_{X_k | Y_1^k, \theta}\}_{\theta \in S_{K,M}}$, ${\sum_{x'} (\nu f'_{1|k} \dots f'_{k|k})_{x'} \leq 1}$. Hence,
\begin{align*}
&\sum_{x \in [K]} | (\nu f'_{1|k} \dots f'_{k|k} p)_x - (\omega g'_{1|k} \dots g'_{k|k} q)_x | \\
	&\leq (1 + K \epsilon) \sum_{x'} | (\nu f'_{1|k} \dots f'_{k|k})_{x'} - (\omega g'_{1|k} \dots g'_{k|k})_{x'} |
		+ K \epsilon \sum_{x'} (\nu f'_{1|k} \dots f'_{k|k})_{x'} \\
	&\leq (1 + K \epsilon) 42 \left( \sqrt{2} \Csigma \log n \right)^{2k+3} K \epsilon + K \epsilon \qquad \text{(by the first part of the lemma)} \\
	&\leq 64 k \left( \sqrt{2} \Csigma \log n \right)^{2k+3} K\epsilon
\end{align*}
since $\epsilon \leq \frac{1}{2K}$ under the assumption of the lemma and $k \wedge (\sqrt{2} \Csigma \log n) \geq 1$.
\end{proof}

\begin{lemma}
\label{lemma_tailleAB}
Assume $\epsilon \leq \frac{1}{12 k \left( \sqrt{2} \Csigma \log n \right)^{2k+1} K}$. Then
\begin{equation*}
d_\Gcal ( A, B ) 
	\leq 70 k \left( \sqrt{2} \Csigma \log n \right)^{2k+1} K \epsilon.
\end{equation*}
\end{lemma}
\begin{proof}
By definition,
\begin{equation*}
d_\Gcal(A,B)
	= \Ebb^*_{Y_0^{k-1}} \sum_{x \in [K]} \int | A_x(Y_0^k) - B_x(Y_0^k)| \lambda(dY_k).
\end{equation*}
Taking some fixed $Y_0^{k-1}$,
\begin{align*}
&\sum_x \int |A_x(y_k) - B_x(y_k)| \lambda(dy_k) \\
	&= \sum_x \int |u(y_k | x) (\nu f'_{1|k-1} \dots f'_{k-1|k-1} p)_x \\
		&\qquad \qquad- v(y_k | x) (\omega g'_{1|k-1} \dots g'_{k-1|k-1} q)_x | \lambda(dy_k) \\
	&\leq \sum_x \int |u(y_k | x) - v(y_k | x)| (\nu f'_{1|k-1} \dots f'_{k-1|k-1} p)_x \lambda(dy_k) \\
		&\quad + \sum_x \int v(y_k | x) | (\nu f'_{1|k-1} \dots f'_{k-1|k-1} p)_x - (\omega g'_{1|k-1} \dots g'_{k-1|k-1} q)_x | \lambda(dy_k).
\end{align*}

Since the brackets are not empty, for all $x \in [K]$, $\int v(y|x) \lambda(dy) \leq 1 + \epsilon e^{-D}$ and $\sum_x (\nu f'_{1|k-1} \dots f'_{k-1|k-1} p)_x \leq 1$ (it is the lower bound of a non empty bracket of $\{ p_{X_k | Y_0^{k-1}, \theta} \, | \, \theta \in S_{K,M} \}$). Therefore, Lemma~\ref{lemma_taille_crochet_justeavantA} entails
\begin{align*}
d_\Gcal(A,B)
	&\leq{} \epsilon e^{-D} \sum_x (\nu f'_{1|k-1} \dots f'_{k-1|k-1} p)_x \\
		& \  + (1 + \epsilon e^{-D}) \sum_x | (\nu f'_{1|k-1} \dots f'_{k-1|k-1} p)_{x} - (\omega g'_{1|k-1} \dots g'_{k-1|k-1} q)_{x} | \\
	&\leq \epsilon e^{-D}
		+ (1 + \epsilon e^{-D}) 64 (k-1) \left( \sqrt{2} \Csigma \log n \right)^{2(k-1)+3} K \epsilon \\
	&\leq 70 k \left( \sqrt{2} \Csigma \log n \right)^{2k+1} K \epsilon
\end{align*}
since $1 + \epsilon e^{-D} \leq 13/12$ under the assumption of the lemma.
\end{proof}

Assume $k \geq 2$ and let $\eta' := 42 (k-1) \left( \sqrt{2} \Csigma \log n \right)^{2k+1} K \epsilon$. Lemma~\ref{lemma_taille_crochet_justeavantA} implies $\sum_x | (\nu f'_{1|k-1} \dots f'_{k-1|k-1})_x - (\omega g'_{1|k-1} \dots g'_{k-1|k-1})_x | \leq \eta'$. Since the bracket $[\nu f'_{1|k-1} \dots f'_{k-1|k-1},$ $\omega g'_{1|k-1} \dots g'_{k-1|k-1}]$ is not empty, it contains a probability measure. Thus, using $(\Csigma \log n)^{-1} K^{-1} \leq p \leq q \leq \Csigma (\log n) K^{-1}$, for all $x \in [K]$,
\begin{align*}
(\Csigma \log n)^{-1} K^{-1} (1 - \eta')
	&\leq (\nu f'_{1|k-1} \dots f'_{k-1|k-1} p)_x \\
	&\leq (\omega g'_{1|k-1} \dots g'_{k-1|k-1} q)_x
	\leq \Csigma (\log n) K^{-1} (1 + \eta').
\end{align*}

Therefore, by equation~\eqref{eq_encadrement_bracketuv},
\begin{multline*}
(\Csigma \log n)^{-1} K^{-1} (1 - \eta') e^{-D} K(1 - \epsilon)
	\leq \sum_{x \in [K]} A_x \\
	\leq \sum_{x \in [K]} B_x
	\leq \Csigma (\log n) K^{-1} (1 + \eta') K ( e^D + \epsilon e^{-D} ).
\end{multline*}

The inequality $(2 \Csigma (\log n) e^D)^{-1} \! \leq \sum_{x \in [K]} \! A_x \leq \sum_{x \in [K]} \! B_x \leq 2 \Csigma (\log n) e^D$ required in the definition of $\bar{N}$ follows as soon as $(1-\eta') (1-\epsilon) \geq 1/2$ and $(1 + \eta') (1 + \epsilon e^{-2D}) \leq 2$, for instance when $(1 - \eta')^2 \geq 1/2$ since $\eta' \geq \epsilon$ and $D \geq 0$, which holds when $\eta' \leq 1/4$, in other words when
\begin{align*}
\epsilon \leq \frac{1}{168 (k-1) \left( \sqrt{2} \Csigma \log n \right)^{2k+1} K}.
\end{align*}

Thus, taking $\epsilon' = 70 k \left( \sqrt{2} \Csigma \log n \right)^{2k+1} K \epsilon$ ensures that if $\epsilon' \leq \frac{70}{168}$, then $d_\Gcal(A,B) \leq \epsilon'$. Lemma~\ref{lemma_lien_entropie_Gcal_et_ens_simples} follows.

\subsubsection{Control of the bracketing entropy of the simple sets and synthesis}

\begin{lemma}
Let $\delta > 0$, then
\begin{equation*}
N\left(\{ \pi_\theta \}_{\theta \in S_{K,M}}, d_\infty, \delta \right)
	\leq \max \left( \frac{K-1}{\delta} , 1 \right)^{K-1},
\end{equation*}
\begin{equation*}
N\left(\{ \Qbf_\theta \}_{\theta \in S_{K,M}}, d_\infty, \delta \right)
	\leq \max \left( \frac{K-1}{\delta} , 1\right)^{K(K-1)},
\end{equation*}
\end{lemma}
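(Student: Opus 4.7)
\bigskip

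My plan exploits the fact that both sets are contained in products of probability simplices. A probability distribution on $[K]$ is determined by its first $K-1$ coordinates, with $\pi(K) = 1 - \sum_{i<K} \pi(i)$; similarly, a transition matrix on $[K]$ is determined by the first $K-1$ entries of each of its $K$ rows, giving $K(K-1)$ free parameters in $[0,1]$. The plan is to build explicit bracket coverings by gridding these free parameters.

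For the first bound, I would cover $[0,1]^{K-1}$ (seen as the parameter space for $\pi$) by a regular grid of cells of side length $\delta/(K-1)$. Each grid cell $\prod_{i<K}[a_i, a_i + \delta/(K-1)]$ yields a candidate bracket $[a,b]$ for $\pi$ by setting $b_i = a_i + \delta/(K-1)$ for $i<K$, $a_K = 1 - \sum_{i<K} b_i$ and $b_K = 1 - \sum_{i<K} a_i$. The first $K-1$ coordinates have bracket size $\delta/(K-1) \leq \delta$, while the last satisfies
\begin{equation*}
b_K - a_K = \sum_{i<K}(b_i - a_i) \leq (K-1)\cdot \frac{\delta}{K-1} = \delta,
\end{equation*}
so the bracket has size at most $\delta$ in supremum norm. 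The number of grid cells needed to cover $[0,1]$ with mesh $\delta/(K-1)$ is at most $\max((K-1)/\delta,\, 1)$ (discarding empty cells or trivially bracketing with a single bracket when $\delta \geq K-1$), and raising to the power $K-1$ yields the stated bound.

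For the second bound, I would apply exactly the same construction independently to each of the $K$ rows of the transition matrix. Each row is a probability distribution on $[K]$, so it admits a bracket covering of cardinality $\max((K-1)/\delta,\, 1)^{K-1}$, and the product of the $K$ row-brackets gives a bracket on $\Qbf$ whose supremum-norm size is the maximum over rows, hence still at most $\delta$. Multiplying the $K$ per-row counts yields $\max((K-1)/\delta,\, 1)^{K(K-1)}$.

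No real obstacle is expected here; the only subtle point is the factor $K-1$ inside the numerator, which comes from the fact that the size of the bracket on the constrained last coordinate is the \emph{sum} (not the maximum) of the sizes on the $K-1$ free coordinates, forcing the mesh to be $\delta/(K-1)$ rather than $\delta$.
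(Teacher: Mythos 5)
The paper states this lemma without giving any proof, and your gridding construction is exactly the standard argument one would supply: parametrize by the $K-1$ (resp.\ $K(K-1)$) free coordinates, mesh at $\delta/(K-1)$ so that the constrained last coordinate of each row still gets a bracket of width at most $\delta$, and multiply the per-coordinate counts. This is correct; the only cosmetic slack is the rounding $\lceil (K-1)/\delta\rceil$ versus $(K-1)/\delta$, which is harmless here since the parameters live in a simplex (indeed with $\pi,\Qbf \geq \sigma_-(n)$) and the bound only enters the final result through its logarithm.
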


Let $\Caux' = \Caux e^{D} \vee (K-1)$, then by \ref{Aentropy},
\begin{equation*}
N\left(\{ \gamma_\theta \}_{\theta \in S_{K,M}}, d_\infty, \delta e^{-D} \right)
	\leq \max \left( \frac{\Caux'}{\delta} , 1\right)^{ m_M K}.
\end{equation*}

Then, Lemma~\ref{lemma_lien_entropie_Gcal_et_ens_simples} ensures that for all $\epsilon \leq \frac{70}{168}$,
\begin{equation*}
\log \bar{N} \left(\Gcal, d_\Gcal, \epsilon \right)
	\leq ( m_M K + K^2 - 1) \log \max \left( \frac{70 k \left( \sqrt{2} \Csigma \log n \right)^{2k+1} K \Caux'}{\epsilon}, 1\right),
\end{equation*}
so that using Equation~\eqref{eq_lien_entropie_Bsigma_et_G} and letting $H(u) = H(\{t^{(D)}_\theta \; | \; \theta \in \Bbf_\sigma\}, d_k, u)$, one gets for all $\epsilon \leq 16 D \Csigma (\log n) e^{2D} \sqrt{70/168}$ and in particular for all $\epsilon \leq 7 D (\sqrt{2} \Csigma \log n) e^{2D}$:
\begin{align*}
H(\epsilon) &\leq ( m_M K + K^2 - 1) \log \max \left( 
		\frac{(16 D \Csigma (\log n) e^{2D})^2  70 k \left( \sqrt{2} \Csigma \log n \right)^{2k+1} K \Caux'}{\epsilon^2}
		, 1 \right) \\
	&\leq 2 ( m_M K + K^2 - 1) \log \max \left( 
		\frac{95 D e^{2D}  \left( \sqrt{2} \Csigma \log n \right)^{k+3/2} \sqrt{k K \Caux'}}{\epsilon}
		, 1 \right).
\end{align*}
Thus, for all $\epsilon > 0$,
\begin{multline*}
H(\epsilon) \leq 2 ( m_M K + K^2 - 1) \log \max \Bigg( 
		\frac{95 D e^{2D}  \left( \sqrt{2} \Csigma \log n \right)^{k+3/2} \sqrt{k K \Caux'}}{\epsilon}, \\
		14 \left( \sqrt{2} \Csigma \log n \right)^{k+1/2} \sqrt{k K \Caux'} \Bigg).
\end{multline*}

\subsection{Choice of parameters}
\label{sec_choice_parameters}

The goal of this section is to find a function $\varphi$ and a constant $C$ for which equation~\eqref{majoration_E} holds, and to choose the weights $x_{K,M}$ of Lemma~\ref{lemma_majorationW}.

\begin{lemma}
\label{lemme.entropie_et_phi}
Let $A, B, C \in \Rbb_+^*$, $H : x \in \Rbb_+^* \mapsto A \log \max(\frac{B}{x}, C)$, and $\varphi(x) : x \in \Rbb_+^* \mapsto x \sqrt{\pi A} (1 + \sqrt{\log \max(\frac{B}{x}, C)})$. Then:
\begin{equation*}
\begin{cases}
\displaystyle x^2 H(x) \leq \varphi(x)^2, \\
\displaystyle \int_0^x \sqrt{H(u)} du \leq \varphi(x).
\end{cases}
\end{equation*}
\end{lemma}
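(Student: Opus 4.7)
My plan is as follows. Throughout, write $L(x) = \max(B/x, C)$, and observe that for $H(x) \geq 0$ to be a meaningful entropy we must have $L(x) \geq 1$, i.e.\ $\log L(x) \geq 0$, so that $\sqrt{\log L(x)}$ is well defined.

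For the first inequality, note that $x^2 H(x) = Ax^2 \log L(x)$ and $\varphi(x)^2 = \pi A x^2 (1+\sqrt{\log L(x)})^2$. Since $\pi \geq 1$ and $\log L(x) \geq 0$, we get
\begin{equation*}
\log L(x) \leq \pi \log L(x) \leq \pi (1+\sqrt{\log L(x)})^2,
\end{equation*}
which gives the first inequality immediately.

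For the second inequality, I would split into two cases depending on the sign of $B/x - C$. First suppose $x \leq B/C$, so that $L(u) = B/u$ for every $u \in (0,x]$. Substitute $t = \log(B/u)$, $du = -Be^{-t}\,dt$, and let $T = \log(B/x) = \log L(x)$; then
\begin{equation*}
\int_0^x \sqrt{H(u)}\,du = \sqrt{A}\, B \int_T^{\infty} \sqrt{t}\,e^{-t}\,dt.
\end{equation*}
The key trick is the subadditivity bound $\sqrt{t} = \sqrt{T+(t-T)} \leq \sqrt{T}+\sqrt{t-T}$ for $t\geq T$, which yields
\begin{equation*}
\int_T^{\infty} \sqrt{t}\,e^{-t}\,dt \leq e^{-T}\sqrt{T} + e^{-T}\int_0^{\infty}\sqrt{s}\,e^{-s}\,ds = e^{-T}\Bigl(\sqrt{T}+\tfrac{\sqrt{\pi}}{2}\Bigr).
\end{equation*}
Multiplying by $\sqrt{A}\,B$ and using $Be^{-T} = x$ gives
\begin{equation*}
\int_0^x \sqrt{H(u)}\,du \leq \sqrt{A}\,x\Bigl(\sqrt{\log L(x)}+\tfrac{\sqrt{\pi}}{2}\Bigr).
\end{equation*}
Comparing with $\varphi(x) = x\sqrt{\pi A}(1+\sqrt{\log L(x)})$, the desired inequality reduces to $\sqrt{\log L(x)} + \tfrac{\sqrt{\pi}}{2} \leq \sqrt{\pi}\bigl(1+\sqrt{\log L(x)}\bigr)$, which holds since $\sqrt{\pi}\geq 1$.

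For the remaining case $x > B/C$, I would split $\int_0^x = \int_0^{B/C} + \int_{B/C}^x$. On $(0,B/C]$ we have $L(u)=B/u$, and the preceding estimate (applied at the endpoint $B/C$) gives $\int_0^{B/C}\sqrt{\log(B/u)}\,du \leq (B/C)(\sqrt{\log C} + \sqrt{\pi}/2)$. On $[B/C,x]$ the integrand is the constant $\sqrt{\log C}$, and since $B/C < x$ we conclude
\begin{equation*}
\int_0^x \sqrt{H(u)}\,du \leq \sqrt{A}\,x\Bigl(\sqrt{\log C}+\tfrac{\sqrt{\pi}}{2}\Bigr),
\end{equation*}
and the same comparison with $\varphi(x)$ finishes the proof. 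There is no serious obstacle here; the only mildly delicate point is the choice $\sqrt{t}\leq\sqrt{T}+\sqrt{t-T}$, which turns the tail integral into the complete gamma integral $\Gamma(3/2)=\sqrt{\pi}/2$ and is precisely what produces the factor $\sqrt{\pi}$ appearing in $\varphi$.
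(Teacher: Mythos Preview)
Your proof is correct. The paper does not actually supply a proof of this lemma; it is stated and then immediately applied, so there is nothing to compare against. Your argument is clean: the first inequality follows from $\log L(x)\leq (1+\sqrt{\log L(x)})^2\leq \pi(1+\sqrt{\log L(x)})^2$, and for the second you correctly reduce the integral, via the substitution $t=\log(B/u)$ and the subadditivity bound $\sqrt{t}\leq\sqrt{T}+\sqrt{t-T}$, to the Gamma integral $\Gamma(3/2)=\sqrt{\pi}/2$, which is exactly where the constant $\sqrt{\pi}$ in $\varphi$ comes from. The case split at $x=B/C$ is handled properly. One small remark: your opening observation that $L(x)\geq 1$ is equivalent to assuming $C\geq 1$ (since for $x>B$ one has $\max(B/x,C)=C$); this is implicit in the lemma being well posed and is always satisfied in the paper's applications, where $C$ is a large constant.
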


Let
\begin{multline*}
\varphi(u) = u \sqrt{2 \pi ( m_M K + K^2 - 1)} \Bigg( 1 + \\
	\Bigg\{ \log \max \Bigg( 
		\frac{95 D e^{2D}  \left( \sqrt{2} \Csigma \log n \right)^{k+3/2} \sqrt{k K \Caux'}}{u}, \\
		14 \left( \sqrt{2} \Csigma \log n \right)^{k+1/2} \sqrt{k K \Caux'} \Bigg) \Bigg\}^{1/2} \Bigg).
\end{multline*}

The function $x \mapsto \frac{\varphi(x)}{x}$ is nonincreasing, so $x \mapsto \frac{\varphi(x)}{x^2}$ is decreasing and one can define $\sigma_{K,M}$ as the unique solution of the equation $(1 + 2 \sqrt{D} \log n) \varphi(x) = \sqrt{n} x^2$ with unknown $x$, when a solution exists. By the definition of $E$ in Lemma~\ref{lemma_inegalite_concentration},
\begin{align*}
\forall \sigma \geq \sigma_{K,M}, \quad E
	&\leq \sqrt{n} \varphi(\sigma) + 4D (\log n)^2 \frac{\varphi(\sigma)^2}{\sigma^2} \\
	&\leq \left(1 + \frac{4D(\log n)^2}{1 + 2\sqrt{D} \log n} \right) \varphi(\sigma) \sqrt{n} \\
	&\leq \left(1 + 2\sqrt{D} \log n \right) \varphi(\sigma) \sqrt{n}.
\end{align*}

Using equation~\eqref{majoration_Z_choix_x_a_faire}, for all $z > 0$ and $x_{K,M} \geq \sigma_{K,M}$, with probability larger than $1 - e^{-z}$,
\begin{align*}
W_{K,M} \! \leq{} & 4 C^* (n_* \! +k+1) \! \left[
			(1 + 2\sqrt{D} \log n) \frac{\varphi(x_{K,M})}{x_{K,M}^2 \sqrt{n}}
			+ \!\! \sqrt{\frac{z}{x_{K,M}^2 n}}
			+ \! 4D \frac{z (\log n)^2}{x_{K,M}^2 n}
		\right] \\
	\leq{} & 4 C^* (n_* \! +k+1) \! \left[
			\frac{\sigma_{K,M}}{x_{K,M}}
			+ \sqrt{\frac{z}{x_{K,M}^2 n}}
			+ 4D (\log n)^2 \frac{z}{x_{K,M}^2 n}
		\right].
\end{align*}

Let $\epsilon > 0$, and let us take
\begin{equation*}
x_{K,M} = \frac{1}{\theta} \left(\sigma_{K,M} + \sqrt{\frac{z}{n}} \right),
\end{equation*}
where $\theta > 0$ is such that $2 \theta + 4D (\log n)^2 \theta^2 \leq \frac{\epsilon}{4 C^* (n_*+k+1)}$. Then
\begin{align*}
W_{K,M}
	\leq 4 C^* (n_*+k+1) \left[
			\theta
			+ \theta
			+ 4D (\log n)^2 \theta^2
		\right]
	\leq \epsilon
\end{align*}
and
\begin{align*}
W_{K,M} x_{K,M}^2
	\leq{} & 4 C^* (n_*+k+1) \left[
			\sigma_{K,M} x_{K,M} + \sqrt{\frac{z}{n}} x_{K,M} + 4D (\log n)^2 \frac{z}{n}
		\right] \\
	\leq{} & 4 C^* (n_*+k+1) \left[
			\theta x_{K,M}^2 + 4D (\log n)^2 \frac{z}{n}
		\right] \\
	\leq{} & 8 C^* (n_*+k+1) \left[
			\frac{1}{\theta} \sigma_{K,M}^2 + \left(4D (\log n)^2 + \frac{1}{\theta}\right) \frac{t}{n}
		\right].
\end{align*}

Take $z = s + w_M + K$, then since $\sum_M e^{-w_M} \leq e-1$, with probability larger than $1 - e^{-s}$, for all $M$, $K$ and for all functions $\pen$ such that
\begin{equation*}
\pen_n(K,M) \geq 8 C^* (n_*+k+1) \left[
			\frac{1}{\theta} \sigma_{K,M}^2 + \left(4D (\log n)^2 + \frac{1}{\theta}\right) \frac{w_M + K}{n}
		\right],
\end{equation*}
it holds
\begin{equation*}
W_{K,M} x_{K,M}^2 - \pen_n(K,M) \leq 8 C^* (n_*+k+1) \left(4D (\log n)^2 + \frac{1}{\theta}\right) \frac{s}{n}.
\end{equation*}

A $\theta$ that satisfies $2\theta + 4D (\log n)^2 \theta^2 = \frac{\epsilon}{4C^*(n_*+k+1)}$ is
\begin{equation*}
\theta = \frac{1}{4D (\log n)^2} \left( \sqrt{1 + \frac{\epsilon D (\log n)^2}{C^* (n_*+k+1)}} - 1 \right).
\end{equation*}

Let us take this $\theta$. Since $\frac{1}{\sqrt{1+x} - 1} \leq \max(1, \frac{3}{x})$ for all $x > 0$,
\begin{align*}
\frac{1}{\theta} &\leq 12 C^* (n_*+k+1) \max\left( \frac{D (\log n)^2}{3C^* (n_*+k+1)}, \frac{1}{\epsilon} \right).
\end{align*}

Therefore,
\begin{align*}
W_{K,M} & x_{K,M}^2 - \pen_n(K,M) \\
	&\leq 96 (C^*)^2 (n_*+k+1)^2 \left(\frac{D (\log n)^2}{3C^* (n_*+k+1)} + \frac{1}{\epsilon} \vee \frac{D (\log n)^2}{3C^* (n_*+k+1)} \right) \frac{s}{n} \\
	&\leq 192 (C^*)^2 (n_*+k+1)^2 \left(\frac{1}{\epsilon} \vee \frac{D (\log n)^2}{3C^* (n_*+k+1)} \right) \frac{s}{n}
\end{align*}
as soon as
\begin{equation*}
\pen_n(K,M) \geq 96 (C^*)^2 (n_*+k+1)^2 \left( \frac{1}{\epsilon} \vee \frac{D (\log n)^2}{3C^* (n_*+k+1)} \right)  
	\left( \sigma_{K,M}^2 + 2 \frac{w_M + K}{n} \right).
\end{equation*}

The last step of the proof is to find an upper bound of $\sigma_{K,M}$.

\begin{lemma}
Let $A$, $B$, $C$ and $E$ be functions $\Nbb \longrightarrow [1, \infty)$, and $\varphi_n : x \longmapsto x A(n) (1 + \sqrt{\log \max (\frac{B(n)}{x}, C(n))})$. Let $\sigma_n$ be the only solution of the equation $\frac{\varphi_n(x)}{x^2 \sqrt{n}} = \frac{1}{E(n)}$ with unknown $x \in \Rbb_+^*$. Let
\begin{equation*}
f(n) = \left[ \frac{A(n)C(n)E(n)}{B(n)}(1 + \sqrt{\log B(n) + \log n}) \right]^2.
\end{equation*}

Assume that there exists $n_1$ such that for all $n \geq n_1$, $f(n) \leq n$. Then
\begin{equation*}
\forall n \geq n_1, \qquad \sigma_n \leq \frac{A(n)E(n)}{\sqrt{n}}(1 + \sqrt{\log B(n) + \log n}).
\end{equation*}
\end{lemma}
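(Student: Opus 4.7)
The plan is to leverage the strict monotonicity of the map $x \mapsto \varphi(x)/x^2$ on $(0,\infty)$. Writing
\[
\frac{\varphi(x)}{x^2} = \frac{A(n)}{x}\left(1 + \sqrt{\log \max(B(n)/x, C(n))}\right),
\]
the factor $1/x$ is strictly decreasing and the factor $1 + \sqrt{\log \max(B/x,C)}$ is nonincreasing (it is constant where $B/x \leq C$ and strictly decreasing where $B/x > C$), so the product is strictly decreasing. Since it diverges as $x \to 0^+$ and tends to $0$ as $x \to \infty$ (here using that $C$ is finite), the equation $\varphi(x)/(x^2\sqrt{n}) = 1/E(n)$ admits a unique positive solution $\sigma$, and establishing $\sigma \leq x^*(n) := \frac{A(n)E(n)}{\sqrt{n}}\bigl(1 + \sqrt{\log B(n) + \log n}\bigr)$ reduces to checking the one-point inequality $\varphi(x^*)/(x^*)^2 \leq \sqrt{n}/E$.

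Substituting $x = x^*$ into this target inequality, the definition of $x^*$ is designed so that $x^*\sqrt{n}/(AE) = 1 + \sqrt{\log B + \log n}$, and the inequality collapses to
\[
1 + \sqrt{\log \max(B(n)/x^*, C(n))} \leq 1 + \sqrt{\log B(n) + \log n},
\]
i.e. the purely algebraic condition $\max(B(n)/x^*(n),\, C(n)) \leq B(n)\, n$. So the plan splits into three steps: (i) establish the monotonicity and record the reduction above; (ii) verify $B(n)/x^*(n) \leq B(n)\, n$, which is immediate from $A,E \geq 1$ and $1+\sqrt{\cdot}\geq 1$, giving $x^*\geq 1/\sqrt{n} \geq 1/n$ for $n\geq 1$; (iii) verify $C(n) \leq B(n)\, n$, which is the only place the hypothesis on $f$ enters.

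For step (iii), taking square roots in $f(n) \leq n$ yields $A(n)E(n)C(n)\bigl(1 + \sqrt{\log B(n) + \log n}\bigr)/B(n) \leq \sqrt{n}$, whence $C(n) \leq B(n)\sqrt{n} \leq B(n)\,n$ using $A,E\geq 1$, $1+\sqrt{\cdot}\geq 1$, and $n\geq 1$. There is no real obstacle: the definition of $f(n)$ has been engineered so that $f(n)\leq n$ encodes exactly the quantitative bound $C \leq Bn$ needed once $x^*$ is reconstituted. The only item requiring a small amount of care is ensuring that $\varphi(x)/x^2$ is \emph{strictly} decreasing in the plateau regime $B/x \leq C$, which is handled by the strict monotonicity of the $1/x$ factor.
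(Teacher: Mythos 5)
Your proof is correct and complete. The paper states this lemma without proof, and your argument—strict monotonicity of $x \mapsto \varphi(x)/x^2$, reduction to the one-point inequality at $x^* = \frac{AE}{\sqrt{n}}(1+\sqrt{\log B + \log n})$, and the observation that $f(n) \leq n$ encodes exactly $C \leq B\sqrt{n} \leq Bn$—is clearly the intended one, with the auxiliary checks ($B/x^* \leq Bn$ via $x^* \geq 1/\sqrt{n}$, and the strictness of the monotonicity supplied by the $1/x$ factor, which is genuinely needed to pass from $\varphi(x^*)/(x^*)^2 \leq \sqrt{n}/E$ to $\sigma \leq x^*$) all handled correctly.
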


In our case,
\begin{equation*}
\begin{cases}
A(n) = \sqrt{2 \pi ( m_M K + K^2 - 1)} , \\
B(n) = 95 D e^{2D}  ( \sqrt{2} \Csigma \log n )^{k+3/2} \sqrt{k K \Caux'} , \\
C(n) = 14 ( \sqrt{2} \Csigma \log n )^{k+1/2} \sqrt{k K \Caux'} , \\
E(n) = 1 + 2 \sqrt{D} \log n \leq 3 \sqrt{D} \log n .
\end{cases}
\end{equation*}

Hence
\begin{align*}
f(n)
	&\leq 18 \pi \, ( m_M K + K^2 - 1) D (\log n)^2 \left( \frac{14}{95 D e^{2D}} \right)^2 \left(1 + \sqrt{\log B(n) + \log n} \right)^2 \\
	&\leq \frac{4}{5} \pi \, ( m_M K + K^2 - 1) (\log n)^2 \frac{e^{-4D}}{D} \Bigg(1 + \log n +
			\log 95 + \log D + 2D + \\
		& \hspace{4cm} \left(k+\frac{3}{2}\right) \log (\sqrt{2}\Csigma \log n) + \frac{1}{2} \log (k K \Caux')
		\Bigg) \\
	&\leq \frac{4}{5} \pi \, ( m_M K + K^2 - 1) (\log n)^2 \frac{e^{-4D}}{D} \Bigg(15 D + 2k \log \log n + \frac{1}{2} \log \Caux
		\Bigg)
\end{align*}
when $\log n \geq \sqrt{2} \Csigma \geq 1$ by using that $1 \leq k, K \leq n$, $\log x \leq x$ for all $x \geq 0$, $D \geq \log n$ by assumption and $\log \Caux' \leq \log \Caux + D + \log K$. Thus,
\begin{equation*}
f(n) \leq \tilde{f}_{K,M}(n) :=  14 \pi \, ( m_M K + K^2 - 1) e^{-4D} (\log n)^2 (k + \log \Caux).
\end{equation*}

Now, assume that there exists $n_1$ such that $\tilde{f}_{K,M}(n) \leq n$ for all $n \geq n_1$, then for all $n \geq n_1$,
\begin{align*}
\sigma_{K,M}^2
	&\leq \frac{36 \pi \, ( m_M K + K^2 - 1) D (\log n)^2}{n} (1 + \log n + \log B) \\
	&\leq \frac{36 \pi \, ( m_M K + K^2 - 1) D (\log n)^2}{n} \left(15 D + 2 k \log \log n + \frac{1}{2} \log \Caux\right).
\end{align*}

Therefore, there exists a numerical constant $C_\pen$ such that the condition on the penalty is implied by
\begin{multline*}
\pen_n(K,M)
	\geq \frac{C_\pen}{n} (n_*+k+1)^2 \left( \frac{1}{\epsilon} \vee \frac{D (\log n)^2}{3C^* (n_*+k+1)} \right)  
		\Big( w_M + \\
		 ( m_M K + K^2 - 1) D (\log n)^2 (D + k \log \log n + \log \Caux) \Big).
\end{multline*}

\end{document}